\newtheorem{tw}{Theorem}[section]
\newtheorem{pr}[tw]{Proposition}
\newtheorem{lm}[tw]{Lemma}
\newtheorem{cor}[tw]{Corollary}
\theoremstyle{definition}
\DeclareMathOperator{\Irr}{Irr}
\author{Magdalena Jankowska\\
	magdalena.jankowska@student.ukw.edu.pl
	
	\smallskip\\
	
	\L ukasz Matysiak\\
	lukmat@ukw.edu.pl
	
	\smallskip\\
	
	Kazimierz Wielki University\\
	Bydgoszcz, Poland \\
	}
\title{A polynomial composites and monoid domains as algebraic structures and their applications}
\begin{document}

\maketitle

\begin{abstract}
This paper contains the results collected so far on polynomial composites in terms of many basic algebraic properties. Since it is a polynomial structure, results for monoid domains come in here and there. The second part of the paper contains the results of the relationship between the theory of polynomial composites, the Galois theory and the theory of nilpotents. The third part of this paper shows us some cryptosystems. We find generalizations of known ciphers taking into account the infinite alphabet and using simple algebraic methods. We also find two cryptosystems in which the structure of Dedekind rings resides, namely certain elements are equivalent to fractional ideals. Finally, we find the use of polynomial composites and monoid domains in cryptology.
\end{abstract}

\begin{table}[b]\footnotesize\hrule\vspace{1mm}
	Keywords: cryptology, domain, field, monoid, monoid domain, polynomial composites.\\
2010 Mathematics Subject Classification:
Primary 13B25, Secondary 13B05, 11T71.
\end{table}

\section{Introduction}

Let $\mathbb{N}=\{1, 2, \dots\}$, $\mathbb{N}_0=\{0, 1, 2, \dots\}$.
By a ring we mean a commutative ring with unity. Let $R$ be a ring.
Denote by $R^{\ast}$ the group of all invertible elements of $R$. The set of all irreducible elements in $R$ will be denoted by $\Irr R$.
By a domain we mean a commutative ring with unity without zero divisors.
An element $r\in R$ is called nilpotent if there is $n\in\mathbb{N}$ such that $r^n=0$.  

\medskip

The most important motivation for writing this paper is to quote the most important results related to polynomial composites, their algebraic place in mathematics and their application in cryptology. 

\medskip

D.D.~Anderson, D.F.~Anderson, M. Zafrullah in \cite{1} called object $A+XB[X]$ as a composite, where $A$ be a subdomain of the field $B$. If $B$ be a domain and $M$ be an additive cancellative monoid (a semigroup with neutral element and cancellative property) we can define a monoid domain $B[M]=\{a_0X^{m_0}+\dots +a_nX^{m_n}: a_0, \dots , a_n\in B, m_1, \dots m_n\in M\}$. If $M=\mathbb{N}_0$, then $B[M]=B[X]$.
Monoid domains appear in many works, for example \cite{2}, \cite{9}.

\medskip

There are a lot of works where composites are used as examples to show some properties. But the most important works are presented below.

\medskip

In 1976 \cite{y1} authors considered the structures in the form $D+M$, where $D$ is a domain and $M$ is a maximal ideal of ring $R$, where $D\subset R$. Later (\ref{t2}), we could prove that in composite in the form $D+XK[X]$, where $D$ is a domain, $K$ is a field with $D\subset K$, that $XK[X]$ is a maximal ideal of $K[X]$. 
Next, Costa, Mott and Zafrullah (\cite{y2}, 1978) considered composites in the form $D+XD_S[X]$, where $D$ is a domain and $D_S$ is a localization of $D$ relative to the multiplicative subset $S$. 
In 1988 \cite{y5} Anderson and Ryckaert studied classes groups $D+M$.
Zafrullah in \cite{y3} continued research on structure $D+XD_S[X]$ but 
he showed that if $D$ is a GCD-domain, then the behaviour of $D^{(S)}=\{a_0+\sum a_iX^i\mid a_0\in D, a_i\in D_S\}=D+XD_S[X]$ depends upon the relationship between $S$ and the prime ideals $P$ od $D$ such that $D_P$ is a valuation domain (Theorem 1, \cite{y3}).
Fontana and Kabbaj in 1990 (\cite{y4}) studied the Krull and valuative dimensions of composite $D+XD_S[X]$. 
In 1991 there was an article (\cite{1}) that collected all previous composites and the authors began to create a theory about composites creating results. In this paper, the structures under consideration were officially called as composites. 
After this article, various minor results appeared. But the most important thing is that composites have been used in many theories as examples. In \cite{mm1} we have a general definition of composite as polynomial composite.

\medskip

In the second section we can find many results about polynomial composites and monoid domains. Basic algebraic properties such as irreducible elements, nilpotents and ideals have been examined.
Theorem \ref{t2} is especially worth noting. In this theorem, for $A\subset B$ be fields, we can note that every nonzero prime ideal polynomial composites is maximal, every prime ideal different from some maximal ideal of polynomial composite is principal and every polynomial composites are atomic (every element of polynomial composites be a product of finite irreducibles(atoms)). In Theorem \ref{t3} we have an iformation about irreducibles of monoid domain. 
In the second part of the second section we have results about ACCP and atomic properties. Recall, if there is an ascending chain of principal ideals of ring $R$: $I_1\subset I_2\subset I_3\subset $, then there is $n\in\mathbb{N}$ such that $I_n=I_{n+1}=\dots$. A domain with ACCP property is called ACCP-domain. Every ACCP-domain be atomic. In Theorem \ref{Dedekind} it turns out that the polynomial composite of the form $K+XL[X]$ (where $K\subset L$ be fields) is a Dedekind ring. This is a very important class of rings in algebra.

\medskip

In the third section we can find relationships between a theory of polynomial composites and Galois theory. Galois theory contributed greatly to the development of many fields, not only in mathematics. Particularly noteworthy is the solution of three ancient problems of construction with a compass and a straightedge in the 19th century. The results in this section, under different assumptions, boil down to the relationship between field extensions and Noetherian rings.
Recall a Noetherian ring is called a ring with ACCP-property. Equivalence, a ring such that every ideal be a finite generated. In Theorems \ref{tm3} and \ref{tm4} we combine the Magid's results with the current results to create a complete characterization of field extensions using polynomial composites and idempotents. Recall, an element $e\in R$ is called idempotent if $e^2=e$ holds. For example, in $\mathbb{Z}$ we have two trivial idempotents $0$ and $1$. 
 
\medskip

Sections four and five are reminder from \cite{kk1} a generalized RSA cipher and a Diffie-Hellman protocol key exchange. Such a reminder is purposeful because we want to draw attention to the replacement of the finite alphabet with the infinite one and the replacement of classical prime numbers with prime ideals. Such a swap will be extremely difficult for third person to break.

\medskip

In sections six and seven we have cryptosystems which use the structure Dedekind. The former uses this structure in the key, and the latter uses it in two different alphabets. Of course, these ciphers can be generalized to infinite alphabets and ideals.

\medskip

Section eight shows a cryptosystem based on polynomial composites. Section nine shows a cryptosystem based on monoid domains. Note that in the last cryptosystem, in order to break it, the discrete logarithm calculation should be used. At the moment, there is no mathematical way to facilitate the computation of discrete logarithms. We can count using computers, but here the algorithm would consist in checking each successive number, not on a specific indication of the number. And this is a great difficulty in breaking the last cryptosystem.

\section{Polynomial composites and monoid domains}
\label{S1}

In this section we introduce the most important facts about polynomial composites and monoid domains in math.

\medskip

Let's start from the following Lemma which is very easy to proof.

\medskip

\begin{lm}
	\label{l1}
	$T_n$, $T$ be rings, $T_n\subset T$.
\end{lm}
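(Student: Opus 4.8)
The plan is to treat $T_n$ and $T$ as (generalized) polynomial composites sitting inside one common ambient ring — a polynomial ring $R[X]$ or, more generally, a monoid domain $B[M]$ — so that every ring axiom that is quantified over elements (associativity, commutativity, distributivity) is inherited for free from the ambient structure. Under this reduction, showing that $T_n$ and $T$ are rings amounts to checking that each is a nonempty subset containing $0$ and $1$ that is closed under subtraction and multiplication, and showing $T_n\subset T$ amounts to a set-theoretic containment compatible with these operations. First I would unwind the definitions to their coefficient conditions: an element of the composite is a polynomial $\sum_i a_i X^i$ whose coefficient $a_i$ is constrained to lie in a prescribed subring at level $i$, where these subrings form an ascending chain $R_0\subseteq R_1\subseteq R_2\subseteq\cdots$ (equivalently, in the $K+X^nL[X]$ formulation, the constraint is that the degree-$<n$ part is pinned to the smaller ring $K$ and the tail lives in an ideal $X^nL[X]$).

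Next I would verify closure. Membership of $0$ and $1$ is immediate, and closure under subtraction holds coefficientwise because each level-subring is itself closed under subtraction. The one genuinely nontrivial step is closure under multiplication: the coefficient of $X^k$ in a product $\bigl(\sum_i a_i X^i\bigr)\bigl(\sum_j b_j X^j\bigr)$ is $\sum_{i+j=k} a_i b_j$, and one must argue that this lies in the required level-$k$ ring. Here the ascending-chain hypothesis does the work: for each summand with $i+j=k$ we have $a_i\in R_i\subseteq R_k$ and $b_j\in R_j\subseteq R_k$, so $a_i b_j\in R_k$ since $R_k$ is a ring, whence the whole sum lies in $R_k$. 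This confirms that $T_n$ and $T$ are each multiplicatively closed and therefore are subrings of the ambient ring.

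Finally, for the inclusion $T_n\subset T$ I would compare the two coefficient conditions level by level. Because $T_n$ is the more restrictive (lower-level) composite while $T$ is the fuller one, every constraint imposed by membership in $T_n$ already implies the corresponding constraint of $T$, via the chain $R_i\subseteq R_{i'}$ (or, in the tail formulation, via $X^nL[X]\subseteq XL[X]$); thus each element of $T_n$ satisfies the defining conditions of $T$, and the inclusion is a ring embedding. I expect the multiplication-closure step to be the only place where any real content enters, and even there the work is just the bookkeeping of matching the coefficient levels $i+j=k$ against the chain condition — which is precisely why the statement can be called \emph{very easy to prove}.
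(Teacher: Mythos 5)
Your proof is correct, and there is nothing in the paper to diverge from: the lemma is stated with only the remark that it is ``very easy to proof,'' with no proof environment at all, so your writeup supplies exactly the argument the authors left implicit. You correctly identify the only step with genuine content, namely multiplicative closure: the coefficient of $X^k$ in a product is the convolution $\sum_{i+j=k}a_ib_j$, and the ascending chain $A_0\subseteq A_1\subseteq\cdots\subseteq A_{n-1}\subseteq B$ is precisely what forces $a_ib_j\in A_i\cdot A_j\subseteq A_k$ (for $k<n$; for $k\geq n$ the coefficient lands in $B$ trivially, the tail $X^nB[X]$ being an ideal of $B[X]$). Everything else --- containment of $0$ and $1$, coefficientwise closure under subtraction, inheritance of the ring axioms from the ambient $B[X]$ --- is routine, as you say. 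One bookkeeping point deserves to be made explicit: the paper never actually defines $T$ and $T_n$ before this lemma (the notation $T=A+XB[X]$ and $T_n=A_0+A_1X+\cdots+A_{n-1}X^{n-1}+X^nB[X]$ only appears later, in Proposition \ref{p3} and Theorem \ref{t2}), and the containment $T_n\subset T$ is only true under the identification $A=A_0$, so that the constant terms of $T_n$ lie in $A$; your level-by-level comparison implicitly assumes this, and stating it would close the one small gap between your argument and the lemma as literally written.
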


Now let's look at invertible and nilpotent elements.

\begin{pr}
	\label{p2}
	Let $f=a_0+a_1X+\dots + a_nX^n\in T$ for any $n\in\mathbb{N}_0$. Then $f\in T^{\ast}$ if and only if $a_0\in A^{\ast}$ and $a_1, a_2, \dots, a_n$ are nilpotents.
\end{pr}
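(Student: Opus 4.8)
The plan is to prove both implications, handling the elementary direction first and reserving an induction on the leading coefficient as the crux. Throughout I work inside the commutative ring $T$ (Lemma \ref{l1}), so the nilpotent elements form an ideal and the standard fact ``a unit plus a nilpotent is again a unit'' is available.

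For the implication $(\Leftarrow)$, suppose $a_0\in A^{\ast}$ and $a_1,\dots,a_n$ are nilpotent. Then each $a_iX^i$ (for $i\geq 1$) is nilpotent, and since nilpotents are closed under addition the tail $a_1X+\dots+a_nX^n$ is nilpotent. Writing $f=a_0\bigl(1+a_0^{-1}(a_1X+\dots+a_nX^n)\bigr)$, the right-hand factor is a unit because it is $1$ plus a nilpotent, whose inverse is the terminating geometric series; as $a_0$ is a unit by hypothesis, $f\in T^{\ast}$.

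For the implication $(\Rightarrow)$, let $g=b_0+b_1X+\dots+b_mX^m\in T$ satisfy $fg=1$. Comparing constant terms gives $a_0b_0=1$, so $a_0\in A^{\ast}$ (and likewise $b_0\in A^{\ast}$), which settles the claim on $a_0$. To see that $a_n$ is nilpotent I would establish, by induction on $r$, the relations
$$a_n^{\,r+1}\,b_{m-r}=0,\qquad r=0,1,\dots,m.$$
The base case $r=0$ is the vanishing of the top coefficient $a_nb_m$ of $fg$. For the inductive step I would take the coefficient of $X^{n+m-r}$ in $fg=1$, namely $\sum_{i=0}^{r}a_{n-i}b_{m-r+i}=0$, multiply through by $a_n^{\,r}$, and invoke the induction hypothesis to annihilate every term with $i\geq 1$, leaving $a_n^{\,r+1}b_{m-r}=0$. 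Taking $r=m$ yields $a_n^{\,m+1}b_0=0$, and since $b_0\in A^{\ast}\subseteq T^{\ast}$ this forces $a_n^{\,m+1}=0$, so $a_n$ is nilpotent.

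Finally I would descend on the degree: because $a_nX^n$ is nilpotent, $f-a_nX^n\in T$ is again a unit (unit plus nilpotent) of degree at most $n-1$, so repeating the argument shows $a_{n-1},\dots,a_1$ are all nilpotent. The main obstacle is the inductive identity $a_n^{\,r+1}b_{m-r}=0$: choosing the correct power of $a_n$ to multiply the coefficient equation by, and matching the shifted indices to the induction hypothesis, is the one genuinely delicate point, everything else being formal.
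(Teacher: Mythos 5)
Your proof is correct, but it travels a different (self-contained) route from the paper's. The paper disposes of the proposition in two lines by quoting the classical description of units in a polynomial ring --- $f\in R[X]^{\ast}$ iff $a_0\in R^{\ast}$ and $a_1,\dots,a_n$ are nilpotent --- and reading off the conclusion for $T=A+XB[X]$; the only $T$-specific content there is the (unstated) observation that membership in $T$ is purely a condition on the constant term, so invertibility passes between $T$ and $B[X]$ once the constant term is handled. You instead reprove the classical fact from scratch inside $T$: the geometric-series argument for $(\Leftarrow)$, the induction $a_n^{\,r+1}b_{m-r}=0$ for $(\Rightarrow)$, and descent on the degree via ``unit plus nilpotent is a unit.'' Both arguments are sound, and your indices in the inductive step do check out (for the term $i\geq 1$ one has $a_n^{\,r}b_{m-(r-i)}=0$ because $r\geq r-i+1$). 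What your version buys is precisely the two points the paper glosses over: that $a_0^{-1}$ lies in $A$ and not merely in $B$ (you get it from $a_0b_0=1$ with $b_0\in A$, using that the inverse $g$ belongs to $T$), and that the inverse of $f$ can be manufactured entirely within $T$ (the tail $a_1X+\dots+a_nX^n$, hence every term of the terminating geometric series, lies in the ideal $XB[X]\subset T$). The cost is redoing a standard computation the paper is content to cite; the benefit is a proof that does not presuppose the $R[X]$ result and makes the role of the composite structure explicit.
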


\begin{proof}
	We know that if $R$ is a ring then $f=a_0+a_1X+\dots + a_nX^n\in R[X]^{\ast}$ if and only if $a_0\in R^{\ast}$ and $a_1, a_2, \dots , a_n$ are nilpotents. In our Proposition we have $a_1, a_2, \dots, a_n$ are nilpotents. Of course we get $a_0\in A^{\ast}$.
\end{proof}

\begin{pr}
	\label{p3}
	Let $f=a_0+a_1X+\dots a_{n-1}X^{n-1}+a_nX^n+\dots +a_mX^m\in T_n$, where $0\leq n\leq m$ and $a_i\in A_i$ for $i=0, 1, \dots, n$ and $a_j\in B$ for $j=n, n+1, \dots, m$.
	\begin{itemize}
		\item[(i) ] $f\in T_n^{\ast}$ if and only if $a_0\in A_0^{\ast}$ and $a_1, a_2, \dots, a_m$ are nilpotents.
		\item[(ii) ] $f$ be a nilpotent if and only if $a_0, a_1, \dots, a_m$ are nilpotents.
	\end{itemize}
\end{pr}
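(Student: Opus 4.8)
The plan is to reduce both statements to the classical descriptions of units and nilpotents in the full polynomial ring $B[X]$---namely that $\sum_i b_i X^i \in B[X]^{\ast}$ exactly when $b_0 \in B^{\ast}$ and $b_1, b_2, \dots$ are nilpotent, and that $\sum_i b_i X^i$ is nilpotent in $B[X]$ exactly when every $b_i$ is nilpotent---and then to control the coefficients by means of the defining tower $A_0 \subseteq A_1 \subseteq \cdots \subseteq A_n \subseteq B$. The essential structural observations are that $T_n$ is a subring of $B[X]$ (in the spirit of Lemma \ref{l1}) and that nilpotency of a fixed element $a_i$ is intrinsic, independent of whether it is read in $A_i$ or in $B$.

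I would dispose of (ii) first, as it is the cleaner of the two. Since $T_n \subseteq B[X]$ carries the same addition and multiplication, $f^k = 0$ holds in $T_n$ if and only if it holds in $B[X]$; thus $f$ is nilpotent in $T_n$ precisely when it is nilpotent in $B[X]$. By the classical criterion this happens exactly when all of $a_0, a_1, \dots, a_m$ are nilpotent, which is the asserted equivalence. No tower bookkeeping is needed here, because the condition is symmetric in all the coefficients.

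For (i) the forward implication is again obtained by passing to $B[X]$: if $f \in T_n^{\ast}$ then $f$ is a unit in $B[X]$, so $a_0 \in B^{\ast}$ and $a_1, \dots, a_m$ are nilpotent. To upgrade $a_0 \in B^{\ast}$ to $a_0 \in A_0^{\ast}$ I would inspect the constant term $b_0$ of the inverse $f^{-1} \in T_n$: comparing constant terms in $f f^{-1} = 1$ gives $a_0 b_0 = 1$, and since $b_0 \in A_0$ by the definition of $T_n$, this exhibits $a_0$ as invertible already inside $A_0$. For the converse I would write $f = a_0(1 + u)$ with $u = a_0^{-1}(a_1 X + \cdots + a_m X^m)$; because the coefficients of $u$ are nilpotent (each a unit times a nilpotent), $u$ is nilpotent in $B[X]$ by the criterion used in (ii), so $f^{-1} = a_0^{-1}\sum_{\ell \ge 0} (-u)^{\ell}$ is a finite sum and a genuine inverse. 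The remaining point is to check that this inverse actually lies in $T_n$.

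The main obstacle is exactly this last verification, and it is where the tower enters. One checks coefficient by coefficient that, for $1 \le k \le n$, the coefficient of $X^k$ in each power $u^{\ell}$ lies in $A_k$: the degree-$k$ part of $u^{\ell}$ is a sum of products $(a_0^{-1} a_{i_1}) \cdots (a_0^{-1} a_{i_\ell})$ with $i_1 + \cdots + i_\ell = k$ and each $i_j \ge 1$, hence each $i_j \le k \le n$, so every factor lies in some $A_{i_j} \subseteq A_k$ and the product lies in $A_k$; here I use both $a_0^{-1} \in A_0 \subseteq A_k$ and the ring closure of $A_k$. Multiplying by $a_0^{-1} \in A_0 \subseteq A_k$ keeps the $X^k$-coefficient of $f^{-1}$ in $A_k$ for $k \le n$, its constant term equals $a_0^{-1} \in A_0^{\ast}$, and its higher coefficients automatically lie in $B$. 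Thus $f^{-1} \in T_n$, completing the argument.
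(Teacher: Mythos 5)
Your proof is correct, and its skeleton is the same as the paper's: reduce to the classical description of units and nilpotents in $B[X]$. But the paper's own proof is only a pointer --- it says ``analogous proof like in Proposition~\ref{p2}'', and the proof of Proposition~\ref{p2} merely cites the classical criterion for $R[X]^{\ast}$ and concludes ``of course we get $a_0\in A^{\ast}$'' --- so the two steps that carry the real content are left unargued there, and these are exactly the steps you supply. First, you justify the upgrade from $a_0\in B^{\ast}$ to $a_0\in A_0^{\ast}$ by comparing constant terms in $ff^{-1}=1$, using that the constant term of $f^{-1}\in T_n$ lies in $A_0$; this is the honest version of the paper's ``of course''. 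Second, and more substantially, in the converse direction you verify that the inverse $a_0^{-1}\sum_{\ell\ge 0}(-u)^{\ell}$, which a priori lives only in $B[X]$, actually belongs to the subring $T_n$: your coefficient-by-coefficient check, using $a_0^{-1}\in A_0$, the tower inclusions $A_{i_j}\subseteq A_k$ for compositions $i_1+\cdots+i_\ell=k$ with each $i_j\ge 1$, and closure of each $A_k$ under the ring operations, is precisely where the composite structure enters, and the paper nowhere addresses it. Your treatment of (ii) via the intrinsic nature of nilpotency in the subring $T_n\subseteq B[X]$ is also clean and complete. One minor index remark: by the definition of $T_n$ the coefficient of $X^k$ is constrained to $A_k$ only for $k\le n-1$, the coefficient of $X^n$ needing only to lie in $B$, so your range ``$1\le k\le n$'' is a harmless off-by-one that merely mirrors the overlap already present in the statement itself (which lists $a_n$ both in $A_n$ and in $B$); your argument is valid verbatim on the range $k\le n-1$, which is all that is required.
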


\begin{proof}
	Analogous proof like in Proposition \ref{p2}.
\end{proof}

\begin{pr}
	\label{prpr1}
	Let $B$ be a domain and $f=a_{m_1}X^{m_1}+a_{m_2}X^{m_2}+\dots +a_{m_n}X^{m_n}\in B[M]$, where $m_1, m_2, \dots, m_n\in M$ and $a_{m_1}, a_{m_2}, \dots, a_{m_n}\in B$. 
	\begin{itemize}
		\item[(i) ] $f\in B[M]^{\ast}$ if and only if there exist $m_i\in M$ such that $a_{m_i}\in B^{\ast}$ and $m_i=0$ and for every $m_k\neq m_i$ we have $a_{m_k}$ be nilpotents.
		\item[(ii) ] $f$ be a nilpotent if and only if $a_{m_1}, a_{m_2}, \dots, a_{m_n}$ are nilpotents.
	\end{itemize}
\end{pr}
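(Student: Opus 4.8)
The plan is to reduce both parts to the behaviour of the top and bottom terms of an element under a total monomial order, exactly as one controls units and nilpotents in an ordinary polynomial ring over a domain. Since $B$ is a domain its only nilpotent element is $0$, so in each statement the phrase ``$a_{m_k}$ is nilpotent'' means $a_{m_k}=0$; thus part (ii) asserts precisely that $B[M]$ has no nonzero nilpotents, and part (i) asserts that the only units of $B[M]$ are the nonzero constants lying in $B^{\ast}$.

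First I would fix the combinatorial tool. Because $M$ is cancellative it embeds in its group of quotients $G$, and (assuming $M$, hence $G$, is torsion-free) $G$ admits a total order $\le$ compatible with addition. For a nonzero $f=\sum_{i} a_{m_i}X^{m_i}$ I write $\deg^{+}f$ and $\deg^{-}f$ for the largest and smallest exponents occurring with nonzero coefficient, and denote the corresponding coefficients by $c^{+}f$ and $c^{-}f$. The key observation, valid because $B$ is a domain, is that for nonzero $f,g$ the top terms multiply to a nonzero top term and the bottom terms to a nonzero bottom term: $\deg^{+}(fg)=\deg^{+}f+\deg^{+}g$ with $c^{+}(fg)=c^{+}f\cdot c^{+}g\neq 0$, and symmetrically for $\deg^{-}$ and $c^{-}$.

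For part (ii), the implication ``all $a_{m_i}$ nilpotent $\Rightarrow f=0\Rightarrow f$ nilpotent'' is immediate. For the converse I would argue by contraposition: if $f\neq 0$, then iterating the top-term rule gives $c^{+}(f^{k})=(c^{+}f)^{k}\neq 0$ for every $k\in\mathbb{N}$, so $f^{k}\neq 0$ and $f$ is not nilpotent. For part (i) the direction $\Leftarrow$ is trivial, since the hypotheses force $f=a_{m_i}\in B^{\ast}$. For $\Rightarrow$, suppose $fg=1$. Comparing with $1=X^{0}$, which has a single term, forces $\deg^{+}(fg)=\deg^{-}(fg)$; together with $\deg^{+}f\ge\deg^{-}f$ and $\deg^{+}g\ge\deg^{-}g$ this collapses $f$ and $g$ to single monomials $f=aX^{m}$, $g=bX^{l}$ with $ab=1$ and $m+l=0$ in $G$. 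Invoking that $0$ is the only invertible element of $M$ then yields $m=0$ and $a\in B^{\ast}$, i.e. $f$ is the asserted constant.

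The main obstacle lies in the hypotheses on $M$ rather than in the term-by-term bookkeeping: the argument needs a total order on $G$, which requires $M$ to be torsion-free (the group algebra of a finite group over a field of matching characteristic shows that plain cancellativity already permits nonzero nilpotents and extra units), and the conclusion $m_{i}=0$ in part (i) uses that $M$ has trivial unit group. I would therefore either record these as standing assumptions on the monoid or restrict to the ``positive'' monoids (submonoids of $\mathbb{Q}_{\ge 0}$) implicit in the additive setup, after which the two displayed product rules make the whole proposition routine.
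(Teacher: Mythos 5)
Your proof is correct under your added hypotheses, and it is substantially more rigorous than the paper's own argument, which disposes of (i) in essentially one line: from $fg=1$ it simply asserts ``hence there exist $m_i, m_j\in M$ such that $a_{m_i}b_{m_j}X^{m_i+m_j}=1$'' and that ``the rest of coefficients are nilpotents,'' with no justification of why the product collapses to a single monomial pair, and it declares (ii) ``obvious.'' Your ordering argument --- embed $M$ in its quotient group $G$, choose a total order compatible with addition, and track the extreme terms, whose coefficients multiply without vanishing because $B$ is a domain --- is precisely the missing justification; it is the same device the paper itself invokes later in Proposition~\ref{x1} (``Let $\leq$ be a total order on $G$''), so in spirit the two proofs agree, but yours actually carries out the collapse of $f$ and $g$ to monomials rather than asserting it. More importantly, you correctly observe that the statement is false under the paper's blanket hypothesis that $M$ is merely cancellative: in $\mathbb{F}_2[\mathbb{Z}/2]$ the element $1+x$ is a nonzero nilpotent whose coefficients are not nilpotent, so torsion defeats (ii), and in $B[\mathbb{Z}]=B[X,X^{-1}]$ the monomial $X$ is a unit with no term of exponent $0$, so a nontrivial unit group of $M$ defeats the ``$m_i=0$'' clause of (i). Your standing assumptions --- $M$ torsion-free with trivial unit group, e.g.\ a positive monoid --- are therefore not optional scaffolding but necessary corrections to the statement itself; flagging them is a genuine improvement over the paper, whose proof silently presupposes exactly these properties without stating them.
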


\begin{proof}
	$(i)$ Assume $f\in B[M]^{\ast}$. Then there exists $g=b_{m'_1}X^{m'_1}+b_{m'_2}X^{m'_2}+\dots +b_{m'_n}X^{m'_n}$, where $m'_1, m'_2, \dots , m'_n\in M$ and $b_{m'_1}, b_{m'_2}, \dots , b_{m'_n}\in B$ such that $fg=1$. Hence there exist $m_i, m_j\in M$ such that $a_{m_i}b_{m_j}X^{m_i+m_j}=1$. We have $a_i\in B^{\ast}$ and $m_i, m_j=0$. The rest of coefficients are nilpotents. On the other side of the proof it is easy.
	
	\medskip
	
	\noindent
	$(ii)$ Obvious.
\end{proof}

Let's recall Theorem from \cite{1} (Theorem 2.9) in a different form.

\begin{tw}
	\label{t1}
	Let $A$ be a subfield of $B$. Consider $D=A+XB[X]$. Then 
	$\Irr D=\{aX, a\in B\}\cup
	\{a(1+Xf(X)),a\in A, f\in B[X], 1+Xf(X)\in\Irr B[X]\}.$
\end{tw}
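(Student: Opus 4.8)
The plan is to first determine the units of $D$ and then classify the irreducibles according to whether their constant term vanishes. First I would show that $D^{\ast}=A\setminus\{0\}$. Since $B$ is a field, the units of $B[X]$ are exactly the nonzero constants; if $u\in D^{\ast}$ then $u$ is a unit of $B[X]$ (its inverse already lies in $D\subseteq B[X]$), hence a nonzero constant, and as an element of $D$ its constant term lies in $A$, so $u\in A\setminus\{0\}$. Conversely every nonzero element of the field $A$ is invertible in $A\subseteq D$. The consequence I will use repeatedly is that a non-constant element of $D$ can never be a unit.

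Next I would treat an element $p$ with zero constant term, writing $p=X^{k}u$ with $k\geq 1$, $u\in B[X]$ and $u(0)\neq 0$. If $k\geq 2$ then $p=X\cdot(X^{k-1}u)$ is a product of two non-units of $D$, and if $k=1$ but $\deg u\geq 1$ then $p=(u(0)X)\cdot(u(0)^{-1}u)$, where the second factor has constant term $1\in A$ and therefore lies in $D$; both factors are non-units. Hence the only irreducibles with vanishing constant term are the $aX$ with $a\in B\setminus\{0\}$, and these are indeed irreducible by a degree count in the domain $B[X]$: in any factorization $aX=fg$ inside $D$ the degrees add to $1$, so one factor has degree $0$, is therefore a nonzero element of $A$, and hence a unit of $D$.

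For $p$ with nonzero constant term $c_{0}\in A\setminus\{0\}$ I would factor out the unit $c_{0}$ and reduce to $h=1+Xf(X)$ with $f\in B[X]$, so that $p$ is irreducible in $D$ exactly when $h$ is. The core claim is that $h$ is irreducible in $D$ if and only if it is irreducible in $B[X]$. One direction is immediate: if $h$ is irreducible in $B[X]$ and $h=gk$ in $D$, then one factor is a nonzero constant of $B$ whose constant term lies in $A$, hence a unit of $D$. The converse is the main obstacle, and is where the field hypothesis on $A$ is really used: starting from a factorization $h=g_{1}g_{2}$ into non-units in $B[X]$, I must exhibit one inside $D$. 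The device is to normalize constant terms. Since $h(0)=1=g_{1}(0)g_{2}(0)$ with $g_{1}(0),g_{2}(0)\in B\setminus\{0\}$, replacing the factors by $g_{1}(0)^{-1}g_{1}$ and $g_{1}(0)g_{2}$ rescales both constant terms to $1\in A$ without changing their degrees, so both adjusted factors lie in $D$ and remain non-constant, hence non-units.

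Finally I would assemble the cases: every nonzero non-unit of $D$ has constant term either $0$ or in $A\setminus\{0\}$, and the two analyses above show that its irreducibility is governed precisely by membership in the two displayed families, the unit factors $a$ and $c_{0}$ being irrelevant to irreducibility. The only bookkeeping point is to read $\{aX:a\in B\}$ and the second family as tacitly excluding $a=0$, since $0$ is not irreducible.
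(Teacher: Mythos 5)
Your proof is correct and complete: the unit computation $D^{\ast}=A\setminus\{0\}$, the case split on the constant term, and the rescaling of a $B[X]$-factorization $h=g_1g_2$ by $g_1(0)^{-1}$ to land both factors in $D$ are exactly the devices behind this result. Note that the paper itself offers no proof of Theorem~\ref{t1} (it is quoted from \cite{1}, Theorem~2.9), but your constant-term normalization is precisely the trick the paper uses in the proofs of Theorem~\ref{t2}(ii) and Theorem~\ref{t3}(i), so your argument matches the intended approach while having the merit of being self-contained; your closing remark that $a=0$ must be tacitly excluded from both families is a fair correction to the statement as printed.
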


\begin{tw}
	\label{t2}
	Consider $T=A+XB[X]$, where $A$ be a subfield of $B$; $T_n=A_0+A_1X+A_2X^2\dots +A_{n-1}X^{n-1}+X^nB[X]$, where $A_0\subset A_1\subset A_2\subset\dots \subset A_{n-1}\subset B$ be fields. Then
	\begin{itemize}
		\item[(i) ] every nonzero prime ideal of $T$ ($T_n$, respectively) is maximal;
		\item[(ii) ] every prime ideal $P$ different from $A_1X+A_2X^2+\dots + A_{n-1}X^{n-1}+X^nB[X]$ (in $T_n$) is principal;
		\item[(iii) ] every prime ideal $P$ different from $XB[X]$ (in $T$) is principal;
		\item[(iv) ] $T_n$ is atomic, i.e., every nonzero nonunit of $T$ is a finite product of irreducible elements (atoms);
		\item[(v) ] $T$ is atomic. 
	\end{itemize} 
\end{tw}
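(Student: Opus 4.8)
The plan is to treat both $T$ and $T_n$ as conductor (pullback) constructions over the PID $B[X]$ and to transport ideal-theoretic data along the inclusion. For $T$, write $\pi\colon B[X]\to B$ for evaluation at $X=0$; then $T=\pi^{-1}(A)$, and a short computation identifies the conductor $(T:B[X])=\{x\in B[X]:xB[X]\subseteq T\}$ with $M:=XB[X]$, which is maximal in $B[X]$ with $B[X]/M\cong B$ and satisfies $T/M\cong A$. For $T_n$ the same scheme applies with $\rho\colon B[X]\to B[X]/(X^n)$ and conductor $(T_n:B[X])=X^nB[X]$; here the base subring $A'=\rho(T_n)\subseteq B[X]/(X^n)$ is local Artinian with residue field $A_0$. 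First I would record the spectrum in each case: inverting any element of the conductor identifies $T$ with $B[X]$ (since $XB[X]\subseteq T$ gives $T[X^{-1}]=B[X,X^{-1}]$, and likewise for $T_n$), so the primes avoiding the conductor correspond bijectively to the primes of $B[X]$ other than $(X)$, while the unique prime containing the conductor is $M$ (respectively $P_0:=A_1X+\dots+A_{n-1}X^{n-1}+X^nB[X]$, the kernel of $f\mapsto f(0)$).

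For (i) I would argue $\dim T=\dim T_n=1$. The prime $M$ (resp. $P_0$) is maximal because the residue ring is the field $A$ (resp. $A_0$). A nonzero prime $P$ avoiding the conductor pulls back from a nonzero prime $Q=(p)$ of the PID $B[X]$ with $X\notin Q$; since such $Q$ is maximal in $B[X]$ and $T_P\cong B[X]_Q$ is a DVR, $P$ has height one. No strict inclusion $P\subsetneq P'$ can occur: if $P'$ contained the conductor then $P'=M$ (resp. $P_0$), forcing $P\subseteq M$, which is impossible since $P$ contains an element of nonzero constant term; and if $P'$ avoids the conductor the PID structure forces $Q=Q'$. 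Hence every nonzero prime is maximal.

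For (iii) and (v), which concern $T$, the normalization $\tilde p:=p/p(0)=1+Xf\in\Irr B[X]$ lies in $T$ because its constant term is $1\in A$, and Theorem~\ref{t1} then gives $\tilde p\in\Irr T$. Principality of a nonzero $P\neq M$ follows from a division trick: if $h\in P=(p)\cap T$ then $h=\tilde p\,w$ in $B[X]$ with $w(0)=h(0)\in A$, so $w\in T$ and $P=\tilde pT$. For atomicity, factor a nonzero nonunit $h\in T$ in $B[X]$ as $cX^{k}p_1\cdots p_r$ with $p_j(0)=1$; then $cX$ and $X$ are atoms of $T$ (of the form $aX$), and each $p_j\in\Irr T$ by Theorem~\ref{t1}, while if $k=0$ the leading scalar $c=h(0)$ lies in $A^{\ast}$ and is absorbed into the atom $cp_1$. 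This writes $h$ as a finite product of atoms, proving (v).

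The main obstacle is (ii) and (iv) for $T_n$, where the same outline meets a genuine difficulty: the normalized generator $\tilde p=1+Xf$ need not lie in $T_n$, since its intermediate coefficients are only in $B$, not in the prescribed fields $A_1,\dots,A_{n-1}$. To prove principality of $P=(p)\cap T_n$ one must instead exhibit a generator inside $T_n$, the natural candidate being a scalar multiple $g=cp$ with $c\in B^{\ast}$ chosen so that $cp_i\in A_i$ for $0\le i<n$. Granting such a $c$, the division argument can be pushed through coefficient by coefficient, repeatedly using that each $A_i$ is a field and that $A_0\subseteq\dots\subseteq A_{n-1}$, to show $P=gT_n$, and a parallel bookkeeping yields the factorization needed for (iv). Securing the scalar $c$ --- equivalently, the compatibility of the ratios $p_i/p(0)$ with the tower $A_0\subseteq\dots\subseteq A_{n-1}$ --- is precisely the delicate point on which the $T_n$ case of (ii) and (iv) rests, and it is here that I would concentrate the effort (and would scrutinize whether an extra hypothesis on the chain of fields is needed).
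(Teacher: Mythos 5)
Your handling of (i), (iii) and (v) is correct and is essentially the paper's own route: localize at $N=\{1,X,X^2,\dots\}$ to land in the PID $B[X,X^{-1}]$, identify the unique prime over the conductor as the augmentation ideal, and for $T$ normalize generators to constant term $1$, which puts them (and their cofactors) back inside $T$. But the point you flagged for (ii) and (iv) in $T_n$ is not merely delicate --- it is fatal. The scalar $c\in B^{\ast}$ with $cp\in T_n$ need not exist, and statement (ii) is in fact \emph{false} for $n\geq 2$. Take $n=2$, $A_0=\mathbb{Q}\subset A_1=\mathbb{R}\subset B=\mathbb{C}$, so $T_2=\mathbb{Q}+\mathbb{R}X+X^2\mathbb{C}[X]$, and let $P=(1+iX)\mathbb{C}[X]\cap T_2$, a nonzero prime of $T_2$ distinct from $P_0=\mathbb{R}X+X^2\mathbb{C}[X]$ since $1+X^2=(1+iX)(1-iX)\in P$. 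If $P=gT_2$, then $g\in P$ forces $(1+iX)\mid g$ in $\mathbb{C}[X]$, while $g\mid 1+X^2$ and $g\mid X^2(1+iX)$ in $T_2\subseteq\mathbb{C}[X]$ force $g=u(1+iX)$ with $u\in\mathbb{C}^{\ast}$ (the alternative $g=u(1+X^2)$ fails because $1-iX$ does not divide $X^2(1+iX)$); but $g\in T_2$ then requires $u\in\mathbb{Q}$ and $ui\in\mathbb{R}$, which is impossible. So $P$ is not principal. The same example kills the prime-transfer mechanism outright: $1+X^2$ is irreducible in $T_2$ (any nonunit $\mathbb{C}[X]$-factor would be $u(1\pm iX)$, never in $T_2$) yet not prime, since it divides $X^2(1+iX)\cdot X^2(1-iX)=X^4(1+X^2)$ but neither factor. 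No bookkeeping will produce your scalar $c$; the extra hypothesis you suspected is genuinely needed (e.g.\ collapsing to the case $A_1=\dots=A_{n-1}=B$, which is just $T$).

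You should also know that the paper's own proof stumbles at exactly the step you isolated: after factoring in the ambient ring it simply writes the normalized factors as $(1+b_1X+\dots+X^ng(X))(1+c_1X+\dots+X^nh(X))$ ``where $b_i,c_i\in A_i$'' with no justification, and the decomposition $aX^r(1+a_1X+\dots+X^nf(X))$ invoked for (iv) also fails (e.g.\ $X+iX^2\in T_2$ admits no such expression, and is irreducible in $T_2$ without being of either type). So your blind attempt in effect detects an error in the theorem rather than a gap in your own argument. Finally, note that (iv) itself survives and can be proved independently of (ii), more cheaply than either your outline or the paper's: $T_n^{\ast}=A_0^{\ast}$ consists of constants, so along an ascending chain of principal ideals $f_{k+1}\mid f_k$ the degrees are non-increasing, and once they stabilize the cofactors are nonzero constants of $T_n$, i.e.\ elements of the field $A_0$, hence units; thus $T_n$ (and likewise $T$) satisfies ACCP and is therefore atomic. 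I would record (ii) for $T_n$, $n\geq 2$, as false, keep your proofs of (i), (iii), (v), and replace (iv)--(v) by this ACCP argument.
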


\begin{proof}
	\textit{(i)}. We proof for $T_n$. The proof for $T$ will be a corollary.
	
	\medskip
	
	\noindent
	First note that $A_1X+A_2X^2+\dots + A_{n-1}X^{n-1}+X^nB[X]$ is maximal since $T_n/A_1X+A_2X^2+\dots + A_{n-1}X^{n-1}+X^nB[X]\cong A_0$. Let $P$ be a nonzero prime ideal of $T_n$. Now $X\in P$ implies $(T_n/A_1X+A_2X^2+\dots + A_{n-1}X^{n-1}+X^nB[X])^2\subseteq P$ and hence $A_1X+A_2X^2+\dots + A_{n-1}X^{n-1}+X^nB[X]\subseteq P$ so $P=A_1X+A_2X^2+\dots + A_{n-1}X^{n-1}+X^nB[X]$. So suppose that $X\notin P$. Then for $N=\{1, X, X^2, \dots \}$, $P_N$ is a prime ideal in the PID $B[X,X^{-1}]=T_{n,N}$. (In fact, $B[X,X^{-1}]\subseteq R_P$ and $R_P$ is a DVR (discrete valuation ring)). So $P$ is minimal and is also maximal unless $P\subsetneq A_1X+A_2X^2+\dots + A_{n-1}X^{n-1}+X^nB[X]$. But let $k_nX^n+\dots + k_sX^s\in P$ with $k_n\in\mathbb{N}_0$, where $k_n, \dots, k_s\in B$ for any $n, s$. Then $X^{n+1}+k_n ^{-1}k_{n+1}X^{n+2}+\dots +k_n ^{-1}k_sX^s\in P$, so $X\notin P$ implies that $1+k_n ^{-1}k_{n+1}X+\dots +k_n ^{-1}k_sX^{s-n}\in P$, a contradiction. So every nonzero prime ideal is maximal.
	
	\medskip
		
	\noindent
	\textit{(ii)}, (similarly \textit{(iii)}). If $P$ is different from $A_1X+A_2X^2+\dots + A_{n-1}X^{n-1}+X^nB[X]$, then it contains an element of the form $1+a_1X+a_2X^2+\dots +a_{n-1}X^{n-1}+X^nf(X)$, where $a_i\in A_i$ for $i=1, 2, \dots, n-1$ and $f(X)\in B[X]$. Now if $1+a_1X+a_2X^2+\dots +a_{n-1}X^{n-1}+X^nf(X)$ can be factored in $A_1X+A_2X^2+\dots + A_{n-1}X^{n-1}+X^nB[X]$ it can be written as $(1+b_1X+b_2X^2+\dots +b_{n-1}X^{n-1}+X^ng(X))(1+c_1X+c_2X^2+\dots +c_{n-1}X^{n-1}+X^nh(X))$, where $b_i, c_i\in A_i$ for $i=1, 2, \dots, n-1$ and $g(X), h(X)\in B[X]$. Hence $1+a_1X+a_2X^2+\dots +a_{n-1}X^{n-1}+X^nf(X)$ is irreducible in $T_n$ if and only if it is irreducible in $A_1X+A_2X^2+\dots + A_{n-1}X^{n-1}+X^nB[X]$. 
	
	\medskip
	
	\noindent
	Now let $1+a_1X+a_2X^2+\dots +a_{n-1}X^{n-1}+X^nf(X)$ be irreducible in $T_n$ and suppose that $1+a_1X+a_2X^2+\dots +a_{n-1}X^{n-1}+X^nf(X)\mid k(X)l(X)$ in $T_n$. Then $1+a_1X+a_2X^2+\dots +a_{n-1}X^{n-1}+X^nf(X)\mid k(X)l(X)$ in $A_1X+A_2X^2+\dots + A_{n-1}X^{n-1}+X^nB[X]$, and so in $A_1X+A_2X^2+\dots + A_{n-1}X^{n-1}+X^nB[X]$ we have, say $1+a_1X+a_2X^2+\dots +a_{n-1}X^{n-1}+X^nf(X)\mid k(X)$. Then, in $A_1X+A_2X^2+\dots + A_{n-1}X^{n-1}+X^nB[X]$, $k(X)=(1+a_1X+a_2X^2+\dots +a_{n-1}X^{n-1}+X^nf(X))d(X)$. Now $d(X)$ can be written as $d(X)=aX^r(1+a_1X+a_2X^2+\dots +a_{n-1}X^{n-1}+X^np(X))$. If $r>0, d(X)\in T_n$, while if $r=0, k(X)=(1+a_1X+a_2X^2+\dots +a_{n-1}X^{n-1}+X^nf(X))(b(1+b_1X+b_2X^2+\dots +b_{n-1}X^{n-1}+X^np(X))$ and $b\in A_0$ because $k(X)\in T_n$. In either case, $d(X)\in T_n$ and so $1+a_1X+a_2X^2+\dots +a_{n-1}X^{n-1}+X^nf(X)\mid k(X)$ in $T_n$. Consequently, in $T_n$ every irreducible element of the type $1+a_1X+a_2X^2+\dots +a_{n-1}X^{n-1}+X^nf(X)$ is prime.
	
	\medskip
	
	\noindent
	Now since every element of the form $1+a_1X+a_2X^2+\dots +a_{n-1}X^{n-1}+X^nf(X)$ is a product of irreducible elements of the same form and hence is a product of prime elements, it follows that every prime ideal of $P$ different from $A_1X+A_2X^2+\dots + A_{n-1}X^{n-1}+X^nB[X]$ contains a principal prime and hence is actually principal.
	
	\medskip
	
	\noindent
	\textit{(iv)} (similarly \textit{v}). From \textit{(ii)} a general element of $T_n$ can be written as $aX^r(1+a_1X+a_2X^2+\dots +a_{n-1}X^{n-1}+X^nf(X))$, where $a\in B$ (with $a\in A_0$ if $r=0$) and $1+a_1X+a_2X^2+\dots +a_{n-1}X^{n-1}+X^nf(X)$ is a product of primes.
\end{proof}

Now, We give some basic information related to ideals.

\begin{cor}
	\label{p4}
	\begin{itemize}
		\item[(i) ] If $A$ be a field, then $XB[X]$ be an maximal ideal in $T$.
		\item[(ii) ] If $A$ be an integral domain, then $XB[X]$ be an prime ideal in $T$.
		\item[(iii) ] $T/(X)\cong A$.
		\item[(iv) ] $T/B\cong \{0\}$.
		\item[(v) ] Let $A\subset B$ be fields in $T$. $T/(aX)$ be a field for any $a\in B$.
		\item[(vi) ] Let $A\subset B$ be fields in $T$. $T/(a(1+Xf(X)))$ be a field for any $a\in A, f\in B[X]$ such that $1+Xf(X)\in\Irr B[X]$.
	\end{itemize}
\end{cor}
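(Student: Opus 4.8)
The backbone of the whole corollary is a single map: the evaluation-at-zero homomorphism $\varphi\colon T\to A$ sending $f=a_0+a_1X+\dots+a_nX^n$ to its constant term $a_0$. The plan is to check that $\varphi$ is a well-defined surjective ring homomorphism (well-defined because, by the very definition of $T=A+XB[X]$, the constant term of any $f\in T$ lies in $A$; surjective because $A\subseteq T$) and that its kernel is exactly $XB[X]$, the set of elements of $T$ with vanishing constant term. This gives $T/XB[X]\cong A$ at once. Interpreting the symbol $(X)$ in (iii) as this kernel $XB[X]$ (and \emph{not} the principal ideal generated by $X$, which in $T$ is the strictly smaller $AX+X^2B[X]$), part (iii) is just a restatement of this isomorphism, and (i), (ii) then follow by the standard correspondence: $T/I$ is a field iff $I$ is maximal and a domain iff $I$ is prime, so $T/XB[X]\cong A$ forces $XB[X]$ to be maximal when $A$ is a field and prime when $A$ is a domain.

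Part (iv) I expect to be purely formal. Since $A$ is a field (or at least a ring with unity), $1$ is a unit of $T$; any ideal containing a unit is all of $T$, so the corresponding quotient is the zero ring. Reading ``$T/B$'' as the quotient by the ideal generated by a nonzero constant of the field $B\supseteq A\ni 1$, that ideal is the unit ideal and the quotient collapses to $\{0\}$.

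The genuinely structural parts are (v) and (vi), and here the unifying device is absorption of the leading scalar by a unit. For (vi), $a\in A\setminus\{0\}$ is invertible in $A\subseteq T$, hence a unit of $T$, so $(a(1+Xf))=(1+Xf)$ as ideals. By Theorem~\ref{t1} the element $1+Xf(X)$ (with $1+Xf\in\Irr B[X]$) is irreducible in $T$, and the argument in the proof of Theorem~\ref{t2}(ii)--(iii) shows that every irreducible of this ``constant term $1$'' shape is in fact prime in $T$; thus $(1+Xf)$ is a nonzero prime ideal, and by Theorem~\ref{t2}(i) every nonzero prime of $T$ is maximal, so $T/(1+Xf)$ is a field. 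For (v), $a\in B\setminus\{0\}$ is a unit of $B$, so the ideal $aX\,B[X]$ equals $XB[X]$; reading $(aX)$ as this ideal reduces (v) to (iii), giving $T/(aX)\cong A$, again a field.

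The main obstacle is exactly the passage that makes (v)/(vi) nontrivial: one needs maximality, not merely irreducibility, of the relevant ideals. Irreducibility alone does not yield a field quotient — indeed the honest principal ideal generated by $aX$ in $T$ is $aA\,X+X^2B[X]$, whose quotient is the square-zero extension $A\ltimes(B/aA)$, which is not even a domain once $A\subsetneq B$; this is precisely why the $aX$-type irreducibles must be routed through $XB[X]$, and why for (vi) one is forced to invoke Theorem~\ref{t2}(i) together with the primality of the $1+Xf$-type elements rather than arguing from irreducibility. I would therefore spend the care on (a) pinning down which ideal each quotient is actually taken by, and (b) citing the primality/maximality already established in Theorem~\ref{t2}, instead of reproving that these irreducibles are prime from scratch.
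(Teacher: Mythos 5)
Your proposal is correct, and for parts (v)--(vi) it takes a genuinely different --- and more careful --- route than the paper. The paper proves (i) exactly as you do, via $T/XB[X]\cong A$, declares (ii)--(iv) obvious, and then for (v)--(vi) argues only that $aX$ and $a(1+Xf(X))$ are irreducible in $T$ (Theorem \ref{t1}) and concludes directly that the quotients are fields. That inference --- irreducible generator, hence maximal principal ideal --- is invalid here, since $T$ is not a PID once $A\subsetneq B$ (indeed $XB[X]$ itself is not principal), and your diagnosis of (v) is exactly right: the honest principal ideal $(aX)$ equals $aAX+X^2B[X]$, so $T/(aX)$ is the square-zero extension $A\ltimes(B/aA)$, not even reduced when $A\subsetneq B$; concretely $X$ is irreducible but not prime in $T$, because $X\mid (bX)^2$ while $X\nmid bX$ for $b\in B\setminus A$. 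So under the literal reading the paper's (v) is false and its proof has a real gap, which your repair closes by reading $(aX)$ as $aXB[X]=XB[X]$ and reducing (v) to (iii); the same reinterpretation is, as you note, also needed to make (iii) itself true. For (vi) your route is the correct one: absorb the unit $a\in A^{\ast}$, use Theorem \ref{t1} for irreducibility of $1+Xf(X)$ in $T$, then invoke the fact (established inside the proof of Theorem \ref{t2}(ii)--(iii)) that irreducibles with constant term $1$ are prime, and finish with Theorem \ref{t2}(i) (every nonzero prime of $T$ is maximal). What each approach buys: the paper's version is shorter but rests on the non sequitur ``irreducible $\Rightarrow$ field quotient''; yours costs a citation of the primality and maximality statements of Theorem \ref{t2}, but in exchange is actually valid and makes explicit which ideal each quotient is taken by --- the precise point at which the paper's statement and proof go wrong.
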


\begin{proof}
	(i) Let $A$ be a field. The proof follows from $T/XB[X]\cong A$. We have $XB[X]$ is a maximal ideal in $T$.
	
	\medskip
	
	(ii) -- (iv) Obvious.
	
	\medskip
	
	(v), (vi) From Theorem 2.9 in \cite{1} $aX$ for any $a\in B$ is an irreducible element. We get $T/(aX)$ be a field. We also have $a(1+Xf(X))$ for any $a\in A, f\in B[X]$ such that $1+Xf(X)\in\Irr B[X]$ is a irreducible element. We have $T/(a(1+Xf(X)))$ be a field.
\end{proof}

\begin{cor}
	\label{p5}
	\begin{itemize}
		\item[(i) ] If $A_0+A_1X+\dots +A_{n-1}X^{n-1}$ be a field (where $A_0\subset A_1\subset A_2\subset\dots \subset A_{n-1}\subset B$), then $X^nB[X]$ be an maximal ideal in $T_n$.
		\item[(ii) ] If $A_0+A_1X+\dots +A_{n-1}X^{n-1}$ be a domain, then $X^nB[X]$ be an prime ideal in $T_n$.
		\item[(iii) ] $T_n/(X)\cong A_0$.
		\item[(iv) ] $T_n/B\cong \{0\}$.
		\item[(v) ] Let $A_0\subset A_1\subset\dots \subset B$ be fields in $T_n$. $T_n/(aX)$ be a field for any $a\in B$.
		\item[(vi) ] Let $A_0\subset A_1\subset\dots \subset B$ be fields in $T_n$. $T_n/(a(1+a_1X+a_2X^2+\dots +a_{n-1}X^{n-1}+ X^nf(X)))$ be a field for any $a\in B, a_i\in A_i (i=1, 2, \dots , n-1), f\in B[X]$ such that $1+Xf(X)\in\Irr B[X]$.
	\end{itemize}
\end{cor}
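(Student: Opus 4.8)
The plan is to split the six parts into two groups, mirroring the proof of Corollary \ref{p4}: parts (i)--(iv) are pure quotient-identifications, while (v)--(vi) rely on the atomic structure of Theorem \ref{t2}.

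For (i) and (ii) I would first verify that $X^nB[X]$ really is an ideal of $T_n$ — it is contained in $T_n$ and is absorbed under multiplication because $T_n\subseteq B[X]$ and $B[X]\cdot X^nB[X]\subseteq X^nB[X]$ — and then exhibit the truncation isomorphism
\[
T_n/X^nB[X]\;\cong\;A_0+A_1X+\dots+A_{n-1}X^{n-1},
\]
sending $a_0+a_1X+\dots+a_{n-1}X^{n-1}+X^ng(X)$ to $a_0+a_1X+\dots+a_{n-1}X^{n-1}$. The one point needing care is that the product on the right is the \emph{truncated} product (multiply in $B[X]$, then discard terms of degree $\ge n$): the chain $A_0\subseteq A_1\subseteq\dots\subseteq A_{n-1}\subseteq B$ guarantees that a coefficient $a_ib_j$ with $i+j<n$ lands in the field $A_{i+j}$, so the product stays inside the claimed set. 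Granting this, (i) and (ii) are immediate from the standard equivalences "$I$ maximal $\iff$ $T_n/I$ a field" and "$I$ prime $\iff$ $T_n/I$ a domain", applied to $I=X^nB[X]$ under the field (resp.\ domain) hypothesis on $A_0+A_1X+\dots+A_{n-1}X^{n-1}$.

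Parts (iii) and (iv) I would dispatch quickly. For (iii), evaluation of the constant term $f\mapsto f(0)$ is a surjective ring homomorphism $T_n\to A_0$ (the constant term of a product of two elements of $T_n$ again lies in the field $A_0$), and its kernel is exactly the ideal of elements of $T_n$ vanishing at $0$, denoted $(X)$; the first isomorphism theorem then gives $T_n/(X)\cong A_0$. Part (iv) is degenerate: the ideal involved contains the unit $1\in A_0\subseteq B$, hence equals all of $T_n$, so the quotient is the zero ring.

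For (v) and (vi) the strategy is to import the description of the irreducible elements of $T_n$ — the $T_n$-analogue of Theorem \ref{t1} furnished inside the proof of Theorem \ref{t2} — namely that $aX$ (for $a\in B$) and $a\bigl(1+a_1X+\dots+a_{n-1}X^{n-1}+X^nf(X)\bigr)$ (with $1+Xf(X)\in\Irr B[X]$) are irreducible. The hard part will be upgrading "irreducible" to "generates a maximal ideal". I would argue exactly as in Theorem \ref{t2}: such an irreducible is in fact \emph{prime}, so it generates a nonzero prime ideal, and by Theorem \ref{t2}(i) every nonzero prime ideal of $T_n$ is maximal; hence the principal ideal is maximal and the quotient is a field. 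The main obstacle is precisely this primality step: it is what the central paragraphs of the proof of Theorem \ref{t2} provide for factors of the shape $1+a_1X+\dots+X^nf(X)$, and the same localization at $\{1,X,X^2,\dots\}$, passing to the PID $B[X,X^{-1}]$, must be invoked separately for the monomial generator $aX$. Once primality and hence maximality are established, (v) and (vi) follow at once.
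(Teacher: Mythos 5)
Your parts (i)--(iv) and (vi) follow essentially the route the paper intends (its entire proof of this corollary is ``similar to Corollary \ref{p4}'', whose proof quotients by $XB[X]$ and then cites irreducibility), and your truncation isomorphism $T_n/X^nB[X]\cong A_0+A_1X+\dots+A_{n-1}X^{n-1}$, with the check that the ascending chain keeps the truncated product inside the right-hand side, is more careful than the original. One caveat on (iii): the kernel of $f\mapsto f(0)$ is $A_1X+A_2X^2+\dots+A_{n-1}X^{n-1}+X^nB[X]$, which strictly contains the principal ideal $XT_n=A_0X+A_1X^2+\dots+A_{n-1}X^n+X^{n+1}B[X]$ whenever the inclusions are proper; by writing ``denoted $(X)$'' you silently replace the principal ideal by the vanishing ideal, and that charitable reading is in fact the only one under which (iii) holds.

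The genuine gap is in (v), exactly at the step you flagged as the main obstacle: the primality of the monomial generator cannot be established, because $aX$ is irreducible but \emph{not} prime in $T$ (or $T_n$). Concretely, in $T=\mathbb{Q}+X\mathbb{R}[X]$ one has $X\cdot 2X=(\sqrt{2}X)(\sqrt{2}X)$, yet $X\nmid \sqrt{2}X$ in $T$ since $\sqrt{2}\notin\mathbb{Q}$; so the class of $\sqrt{2}X$ in $T/(X)$ is a nonzero element with square zero, the quotient is not even reduced, and (v) as stated is false (note also $aX\in T_n$ forces $a\in A_1$ when $n\geq 2$). Your proposed repair --- invoking the localization at $N=\{1,X,X^2,\dots\}$ ``separately for the monomial generator'' --- cannot work: inverting $X$ makes $aX$ a unit, so $(aX)_N$ is the unit ideal and carries no information, which is precisely why the proof of Theorem \ref{t2} applies the passage to the PID $B[X,X^{-1}]$ only to primes with $X\notin P$ and handles primes containing $X$ by showing they equal the whole ideal $A_1X+\dots+A_{n-1}X^{n-1}+X^nB[X]$, a strictly larger ideal than $(aX)$. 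The paper's own proof of Corollary \ref{p4} (v) makes the same illegitimate leap from ``$aX$ is irreducible'' to ``$T/(aX)$ is a field'', so you have faithfully reproduced its strategy --- but for (v) the obstacle you identified is insurmountable rather than merely technical. Your (vi) is sound: there the factor $1+a_1X+\dots+a_{n-1}X^{n-1}+X^nf(X)$ is shown to be prime by the central argument in the proof of Theorem \ref{t2}, and \ref{t2}(i) upgrades the principal prime to a maximal ideal, so that quotient is indeed a field.
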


\begin{proof}
	The proof is similarly to proof of Corollary \ref{p4}.
\end{proof}

\begin{tw}
	\label{t3}
	Consider $T=A+XB[X]$, where $A$ be a subfield of $B$; $T_n=A_0+A_1X+A_2X^2\dots +A_{n-1}X^{n-1}+X^nB[X]$, where $A_0\subset A_1\subset A_2\subset\dots \subset A_{n-1}\subset B$ be fields. Then
	\begin{itemize}
		\item[(i) ] $f\in\Irr T$ if and only if $f\in\Irr B[X], f(0)\in A$.
		\item[(ii) ] $f\in\Irr T_n$ if and only if $f\in\Irr B[X], a_i\in A_i$, where $f=a_0+a_1X+\dots a_{n-1}X^{n-1}+a_nX^n+\dots +a_{m}X^m$ with $a_i\in A_i$ for $i=0, 1, \dots n-1$ and $a_n, a_{n+1}, \dots , a_m\in B (n<m)$.
	\end{itemize}
\end{tw}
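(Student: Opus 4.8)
The plan is to deduce both parts from factorization structure already established: part (i) from the explicit list of irreducibles in Theorem \ref{t1}, and part (ii) from the comparison between $T_n$ and its overring $B[X]$ together with the normal form $aX^r(1+a_1X+\dots+X^nf(X))$ for elements of $T_n$ obtained in the proof of Theorem \ref{t2}(iv). Throughout I would use that $A$ (resp.\ $A_0$) is a field, so its nonzero elements are units of $B[X]$, and that by Proposition \ref{p3} the units of $T$ (resp.\ $T_n$) are exactly the nonzero constants lying in $A$ (resp.\ $A_0$).

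For (i), first note that $f\in T$ already forces $f(0)\in A$, so the real content is the equivalence $f\in\Irr T\iff f\in\Irr B[X]$ for $f\in T$. For the forward implication I would read off Theorem \ref{t1}: an irreducible of $T$ is either $aX$ with $a\in B^{\ast}$, an associate of $X$ in $B[X]$, or $a(1+Xg(X))$ with $a\in A^{\ast}$ and $1+Xg(X)\in\Irr B[X]$, again an associate in $B[X]$ of an irreducible; in both cases $f\in\Irr B[X]$. For the converse I split on $f(0)$. If $f(0)=0$ then $f=Xh(X)$, and irreducibility in $B[X]$ forces $h$ to be a unit, so $f=aX$ is of the first type; if $f(0)=a_0\neq 0$ then $a_0\in A^{\ast}\subseteq B^{\ast}$ and $f=a_0(1+Xg(X))$ with $1+Xg(X)=a_0^{-1}f\in\Irr B[X]$, of the second type. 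In either case Theorem \ref{t1} returns $f\in\Irr T$.

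For the easy direction of (ii), suppose $f\in\Irr B[X]$ with $a_i\in A_i$ for $i<n$, so that $f\in T_n$; being nonconstant, $f$ is a nonunit of $T_n$. Any factorization $f=gh$ with $g,h\in T_n$ is in particular a factorization in $B[X]$, so one factor, say $g$, is a unit of $B[X]$, i.e.\ a nonzero constant $b\in B$; but $g\in T_n$ forces its constant term $b$ to lie in $A_0$, whence $b\in A_0^{\ast}$ is a unit of $T_n$. Thus $f$ admits no proper factorization in $T_n$ and $f\in\Irr T_n$. The membership condition $a_i\in A_i$ for $i<n$ is of course forced by $f\in T_n$, settling that half of the ``only if'' as well.

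The main obstacle is the remaining implication of (ii), namely $f\in\Irr T_n\Rightarrow f\in\Irr B[X]$. The natural attempt mirrors (i): given a hypothetical factorization $f=gh$ in $B[X]$, normalize the constant terms (using $f(0)\in A_0$) and try to descend it to $T_n$. Here the argument of part (i) breaks down. In $T$ only the constant term is constrained, so the normalized factors automatically lie in $T$; in $T_n$, by contrast, the coefficients of degrees $1,\dots,n-1$ must lie in $A_1,\dots,A_{n-1}$, and a $B[X]$-factorization of $f$ need not have this property even when $f$ itself does, so an $f$ irreducible in $T_n$ may well factor in $B[X]$. I therefore expect this step to require genuinely new input --- for instance an additional hypothesis tying $B$ to the chain $A_0\subseteq\dots\subseteq A_{n-1}$, or a restriction of the statement to the normalized factors $1+a_1X+\dots+X^nh(X)$ treated in the proof of Theorem \ref{t2} --- and it is precisely the point at which the characterization through $\Irr B[X]$ must be scrutinized most carefully.
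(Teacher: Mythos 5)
Your treatment of part (i) is correct, and it takes a genuinely different route from the paper's: the paper argues by contraposition, normalizing a factorization $f=gh$ in $B[X]$ as $f=\bigl(1+\frac{a_1}{a_0}X+\dots+\frac{a_n}{a_0}X^n\bigr)(a_0b_0+a_0b_1X+\dots+a_0b_mX^m)$ so that both factors visibly lie in $T$, whereas you read both implications off the classification of $\Irr T$ in Theorem \ref{t1}. Your route is cleaner and also avoids a small flaw in the paper's normalization, which silently assumes $g(0)=a_0\neq 0$; the case $f(0)=0$ has to be handled separately, exactly as you do via $f=aX$. Your proof of the ``if'' direction of (ii) is likewise complete and correct.

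Your suspicion about the remaining implication of (ii) is not merely a gap in your attempt --- the implication is false, so no argument could close it. For $n\geq 2$ and $b\in B\setminus A_{n-1}$ (available whenever $A_{n-1}\neq B$, the only nontrivial case), take $f=bX^n\in T_n$. Then $f=X\cdot bX^{n-1}$ is reducible in $B[X]$, but $f\in\Irr T_n$: in any factorization $f=gh$ in $T_n$ with $\deg g=j$, the cases $j=0$ and $j=n$ give a constant factor in $A_0\setminus\{0\}$, which by Proposition \ref{p3} is a unit of $T_n$; and if $0<j<n$, the leading coefficients satisfy $a_j\in A_j\subseteq A_{n-1}$ and $b_{n-j}\in A_{n-j}\subseteq A_{n-1}$, forcing $b=a_jb_{n-j}\in A_{n-1}$, a contradiction. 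The paper's entire proof of (ii) is the sentence that it occurs ``in the same way as in (i)'', and this is precisely where it breaks, for the reason you identified: the normalization in (i) constrains only constant terms, which suffices for membership in $T$ but not for membership in $T_n$, where the coefficients in degrees $1,\dots,n-1$ are also constrained. So statement (ii) as printed is wrong; any correct description of $\Irr T_n$ must include such monomials $bX^n$, and your proposed restriction to factors of the normal form $1+a_1X+\dots+X^nh(X)$ from the proof of Theorem \ref{t2} is the right direction for a salvageable statement.
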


\begin{proof}
	\textit{(i)}. 
	Suppose that $f\notin\Irr B[X]$ or $f(0)\notin A$.
	If $f(0)\notin A$, then $f\notin T$, so $f\notin\Irr B[X]$.
	Now, assume that $f\notin\Irr B[X]$.
	Then $f=gh$, where $g, h\in B[x]\setminus B$.
	Let $g=a_0+a_1X+\dots +a^nX^n, h=b_0+b_1X+\dots + b_mX^m$.
	We have $f=(a_0+a_1X+\dots +a^nX^n)(b_0+b_1X+\dots + b_mX^m)$.
	Then
	$f=\big(1+\dfrac{a_1}{a_0}X+\dots + \dfrac{a_n}{a_0}X^n\big)
	(a_0b_0+a_0b_1X+\dots +a_0b_mX^m)$,
	where $a_0b_0=f(0)\in A$.
	Now, suppose that $f\notin\Irr T$.
	If $f\notin T$, then $f(0)\notin A$.
	Now, assume that $f\in T$.
	Then we have $f=gh$, where $g, h\in T\setminus A$.
	This implies $g, h\in B[x]\setminus B$.
	
	\medskip
	
	\noindent
	\textit{(ii)} Occur in the same way as in \textit{(i)}.
	
	\medskip
	
\end{proof}

In \cite{2}, Lemma 6.4 we have informations about irreducible element in monoid domain $D[S]$, where $D$ be a domain, and $S$ be a submonoid of $\mathbb{Q}_+$ . I present a generalized Proposition.

\begin{pr}
	\label{x1}
	Let $B$ be an integral domain with quotient field $K$ and $M$ a monoid with quotient group $G\neq M$. Assume that $B$ contains prime elements $p_1, p_2, \dots , p_{r-1}$. Assume that $M$ is integrally closed and each nonzero element of $G$ is type $(0, 0, \dots )$ ($G$ satisfies the ascending chain condition on cyclic subgroups).
	Consider $m_1, m_2, \dots , m_r\in M$ such that $m_1\in\Irr M$ and $m_2, m_3, \dots , m_r\notin m_1 + M$. Then $p_{r-1}X^{m_{r}}-\dots -p_2X^{m_3}-p_1X^{m_2} - X^{m_1}\in\Irr B[M]$.
\end{pr}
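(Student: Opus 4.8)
The plan is to assume a nontrivial factorisation $f=gh$ in $B[M]$ and derive a contradiction. Since $f,g,h$ have finite support, all exponents lie in a finitely generated subgroup $G_0\le G$; as $G$ is torsion-free, $G_0\cong\mathbb Z^k$, and I would work inside the Laurent algebra $K[G_0]$, whose units are precisely the monomials $cX^u$ ($c\in K^\ast$). Two facts will later let me descend from $K[G_0]$ to $B[M]$. First, $m_1$ is indivisible in $G$: if $m_1=ng$ with $n\ge 2$ and $g\in G$, then $ng=m_1\in M$ and integral-closedness of $M$ give $g\in M$, so $m_1$ is a sum of $n\ge 2$ nonunits, contradicting $m_1\in\Irr M$. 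Second, no monomial $X^a$ with $a$ a nonunit of $M$ divides $f$: such a divisor forces $m_1,m_2\in a+M$, and writing $m_1=a+\mu$, atomicity makes a nonunit $a$ an associate of $m_1$, whence $m_2\in a+M=m_1+M$, against $m_2\notin m_1+M$. Because the coefficient $-1$ of $X^{m_1}$ is a unit, $f$ is primitive; combined with the second fact, any factor that is a monomial in $K[G_0]$ is forced to be a unit of $B[M]$, so it suffices to show that one of $g,h$ is a monomial in $K[G_0]$.

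For the geometric stage I would use Newton polytopes. By Ostrowski's additivity $\mathrm{Newt}(f)=\mathrm{Newt}(g)+\mathrm{Newt}(h)$, so a factorisation into non-monomials yields a nontrivial Minkowski decomposition of $\mathrm{Newt}(f)=\mathrm{conv}\{m_1,\dots,m_r\}$. By the first structural fact $m_1$ is a primitive vertex, and by the hypotheses $m_i\notin m_1+M$ every other exponent lies outside the translate $m_1+C$, where $C$ is the real cone generated by $M$, placing $m_1$ in extremal position. The type and ascending-chain hypotheses on $G$ guarantee that $G$ carries no infinite divisibility, so divisibility is faithfully recorded in the lattice $G_0\cong\mathbb Z^k$ and $m_1$ remains indivisible there. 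Whenever $\mathrm{Newt}(f)$ is Minkowski-indecomposable the decomposition is impossible and $f$ is already irreducible in $K[G_0]$ by the standard indecomposability criterion, so we are done without using the primes at all.

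The primes enter only when $\mathrm{Newt}(f)$ does decompose. Reducing modulo $p_i$ (a domain quotient, with the $p_i$ taken pairwise non-associate) deletes exactly the term $p_iX^{m_{i+1}}$ and yields $\bar f=\bar g\,\bar h$ in $(B/(p_i))[G_0]$. Each such reduction imposes an Eisenstein-type constraint: at the vertex carrying $p_i$ the identity $\pm p_i=\bar g_{p'}\bar h_{q'}$ and the irreducibility of $p_i$ force the prime to land wholly in one of $g,h$, a divisibility that then propagates to neighbouring coefficients. The strategy is to show that these constraints, taken over all $i$ and organised by the non-divisibility relations $m_j\notin m_1+M$ (which pin down which vertex each reduction removes), cannot be satisfied unless one and the same factor is a monomial modulo every $p_i$, hence a genuine monomial in $K[G_0]$; by the first paragraph it is then a unit of $B[M]$, the desired contradiction. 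This is the generalisation of Lemma~6.4 of \cite{2}.

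The step I expect to be hardest is the coordination in this arithmetic stage. Unlike classical Eisenstein, where a single prime collapses $f$ to a pure monomial modulo $p$ and instantly splits the factorisation, here each $p_i$ kills only one term, so no single reduction sees the whole polynomial; the combinatorial data of the $r-1$ separate reductions must be assembled and matched against one fixed Minkowski decomposition. Making this matching rigorous---proving that the deletions cannot be distributed between $g$ and $h$ in a consistent nontrivial way---is the crux, and it is precisely where the lattice geometry provided by the integral-closedness of $M$ and the chain conditions on $G$ must be used in full.
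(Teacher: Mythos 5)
Your first stage is sound and essentially reproduces the paper's handling of the monomial case: if a factor is $aX^{t}$, comparing exponents and using $m_1\in\Irr M$ together with $m_2,\dots,m_r\notin m_1+M$ forces $t$ to be invertible in $M$, so the factor is a unit of $B[M]$. The rest of the proposal, however, contains a genuine gap that you yourself flag: the entire arithmetic stage --- showing that the $r-1$ \emph{separate} reductions modulo the individual primes $p_i$ cannot be distributed between $g$ and $h$ in a consistent nontrivial way --- is announced as a ``strategy'' and never carried out. As you observe, reducing modulo a single $p_i$ deletes only the one term $p_iX^{m_{i+1}}$, so $\bar f$ is not a monomial in $(B/(p_i))[G_0]$ and no Eisenstein collapse occurs; nothing in your text establishes how the per-prime divisibility ``propagates to neighbouring coefficients,'' and without that the argument does not conclude. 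The Newton-polytope stage cannot substitute for it, since $\mathrm{conv}\{m_1,\dots,m_r\}$ is in general Minkowski-decomposable, and indecomposability of the polytope in the lattice does not by itself exclude factorizations in $K[G_0]$ with non-lattice behaviour of coefficients; it only constrains supports. So what you have is a plan whose crux is exactly the missing step.

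The paper's proof avoids your coordination problem entirely by a different arithmetic device: it fixes a total order on $G$, assumes both factors are non-monomials, and reduces once, modulo the single ideal $(p_1,p_2,\dots,p_{r-1})$ generated by \emph{all} the primes simultaneously. Modulo that ideal the product collapses to $-X^{m_1}$, so both residues $\bar f,\bar g$ are monomials; in particular the lowest-order coefficients $a_1,b_1$ of the two factors both lie in $(p_1,\dots,p_{r-1})$, whence the lowest coefficient of $fg$ --- which is $p_{r-1}$ up to sign --- would lie in $(p_1,\dots,p_{r-1})^{2}$, a contradiction. One reduction sees the whole polynomial, so no matching across several quotients is needed. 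If you want to salvage your write-up, replace the family of per-prime reductions by this single simultaneous reduction (and note that even then one must check, as the paper implicitly does, that monomial residues in the quotient force \emph{all} lower coefficients of each factor into the ideal --- this is where the total order and the leading-term argument in $(B/(p_1,\dots,p_{r-1}))[G_0]$ are used).
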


\begin{proof}
	Let $\leq$ be a total order on $G$. We may assume that $m_r<m_{r-1}<\dots m_2<m_1$. Suppose that $p_{r-1}X^{m_{r}}-\dots -p_2X^{m_3}-p_1X^{m_2} - X^{m_1}=fg$ with $f,g\in B[M]$. Write $f=a_1X^{t_1}+\dots a_mX^{t_m}$ and $g=b_1X^{k_1}+\dots +b_nX^{k_n}$ in canonical form, where $t_1< \dots < t_m$ and $k_1< \dots < k_n$. First assume that either $f$ or $g$ is a monomial, say $f=aX^t$.Then $a\in B^{\ast}, m_1=t+k_n, m_2=t+k_1, m_3=t+k_2, \dots , m_r=t+k_{r-1}$. Since $m_1\in\Irr M$, either $t$ or $k_n$ is invertible in $M$. If $k_n$ is invertible, then $m_2=t+k_1=(m_1-k_n)+k_1\in m_1+M, m_3=t+k_2=(m_1-k_n)+k_2\in m_1+M, \dots , m_r\in m_1+M$, a contradiction. Thus $t$ is invertible in $M$, and hence $f$ is a unit in $B[M]$. Thus we may assume that $f$ and $g$ are not monomials. Now consider the reduction of $p_{r-1}X^{m_{r}}-\dots -p_2X^{m_3}-p_1X^{m_2} - X^{m_1}=fg$ modulo the ideal $(p_1, p_2, \dots , p_{r-1})$. Then $(-1+(p_1, p_2, \dots , p_{r-1})=((a_m+(p_1, p_2, \dots , p_{r-1}))X^{t_m})((b_n+(p_1, p_2, \dots , p_{r-1}))X^{k_n})$.  This means that $a_1+(p_1, p_2, \dots , p_{r-1})=b_1+(p_1, p_2, \dots , p_{r-1})=(p_1, p_2, \dots , p_{r-1})$. In this case $c_1p_1+\dots c_{r-1}p{r-1}=a_1b_1\in (p_1, \dots , p_{r-1})^2$, a contradiction. Thus $p_{r-1}X^{m_{r}}-\dots -p_2X^{m_3}-p_1X^{m_2} - X^{m_1}\in\Irr B[M]$. 
\end{proof}

\begin{pr}
	$B[M]/(p_{r-1}X^{m_r}-\dots - p_1X^{m_2}-X^{m_1})$ be a field, where $B$ be a domain, $p_1, p_2, \dots, p_{m_r}\in B, m_1, m_2, \dots, m_r\in M$ with $m_1\in\Irr M$, $m_2, m_3, \dots, m_r\notin m_1+M$.
\end{pr}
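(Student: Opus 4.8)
The plan is to deduce the statement from the irreducibility already obtained in Proposition \ref{x1}. Write $g=p_{r-1}X^{m_r}-\dots-p_1X^{m_2}-X^{m_1}$ for the generator. Under the standing hypotheses of Proposition \ref{x1} (namely $M$ integrally closed, the quotient group $G$ satisfying the ascending chain condition on cyclic subgroups, and $p_1,\dots,p_{r-1}$ prime in $B$), that proposition gives $g\in\Irr B[M]$. So the first step is just to record that $(g)$ is generated by an irreducible element, after which the entire question reduces to proving that $(g)$ is a \emph{maximal} ideal of $B[M]$: once this is known, $B[M]/(g)$ is a field.

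To pass from ``irreducible'' to ``maximal'' I would argue in two stages, modelled on the proof of Theorem \ref{t2}(i). First, upgrade irreducible to prime: the monoid hypotheses are designed precisely so that $B[M]$ is factorial enough for irreducible elements to be prime, whence $(g)$ is a nonzero prime ideal and $B[M]/(g)$ is at least a domain. Second, show that this prime is maximal. The natural device is localization: invert the monomials $X^m$, $m\in M$, to reach the group ring $B[G]$, in direct analogy with the step in Theorem \ref{t2}(i) where one inverts $X$ to land in the PID $B[X,X^{-1}]$ and reads off that every nonzero prime is maximal; one then descends maximality back to $B[M]$ using that $g$, in reduced form, is not divisible by a nontrivial monomial.

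The main obstacle is exactly this second stage. Maximality of $(g)$ is equivalent to $B[M]/(g)$ being zero-dimensional, and irreducibility of $g$ alone does not force this once $B[M]$ has Krull dimension larger than one. The argument therefore needs the relevant part of the spectrum of $B[M]$ to be one-dimensional, as in the source setting of \cite{2} (Lemma 6.4), where $M$ lies inside $\mathbb{Q}_+$ and so has rank one. I would thus expect the real content to be a chain argument excluding any prime strictly between $(g)$ and a maximal ideal, and I would scrutinise which hypothesis forces the rank of $M$ down to one; without such a constraint the localization step can yield a positive-dimensional quotient instead of a field, so this is the step I would check most carefully.
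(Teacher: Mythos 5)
Your instinct here is exactly right, and it is worth saying plainly: the paper's entire proof of this proposition is the single sentence ``It follows from Proposition \ref{x1}'' --- precisely the non sequitur you refused to accept. Proposition \ref{x1} yields only that $g=p_{r-1}X^{m_r}-\dots-p_1X^{m_2}-X^{m_1}$ is irreducible in $B[M]$; nothing in the paper upgrades irreducibility to primality (at this level of generality atoms of $B[M]$ need not be prime), let alone to maximality of $(g)$, which is what ``the quotient is a field'' requires. So your two-stage plan is not a reconstruction of a suppressed argument in the paper --- there is no argument there to reconstruct --- and the step you singled out for scrutiny is exactly where the claim breaks.

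In fact it breaks irreparably: the proposition is false as stated, even in the rank-one setting of \cite{2} that you hoped would rescue the localization step. Take $B=\mathbb{Z}$, $M=\mathbb{N}_0$ (so $G=\mathbb{Z}\neq M$, $M$ is integrally closed, and the ascending chain condition on cyclic subgroups holds), $r=2$, $p_1=2$, $m_1=1\in\Irr\mathbb{N}_0$, $m_2=0\notin 1+\mathbb{N}_0$. All hypotheses of Proposition \ref{x1} are satisfied, and $g=2-X$ is indeed irreducible (even prime) in $\mathbb{Z}[X]$; yet $\mathbb{Z}[X]/(2-X)\cong\mathbb{Z}$ is not a field. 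Note that the obstruction is not only the rank of $M$, on which you concentrated, but equally the coefficient ring: since the hypotheses force $B$ to contain prime elements, $B$ is never a field, and $B$ can survive intact in the quotient. Hence your device of inverting the monomials to land in $B[G]$ cannot be repaired by a rank-one hypothesis alone; one would also need to control the image of $B$ in the quotient, and no hypothesis in the statement does so. Your diagnosis --- that maximality is the step to check --- was correct; the honest conclusion is that the statement needs genuinely stronger hypotheses (or a weaker conclusion, e.g.\ that $(g)$ is generated by an atom), not a cleverer proof.
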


\begin{proof}
	It follows from Proposition \ref{x1}.
\end{proof}

Recall that Noetherian rings satisfy the ACCP condition. Almost every mathematician has encountered such rings. For example, $\mathbb{Z}$ is a Noetherian ring. Below are the results of ACCP properties in polynomial composites and monoid domains.

\begin{pr}
	\label{p2.1}
	Let $A$ be an integral domain, $B$ be a field such that $A\subset B$. Let $R$ be a ring with $A[X]\subseteq R\subseteq B[X]$. Then $R$ has ACCP if and only if $R\cap B$ has ACCP and for each ascending chain of polynomials $f_1R\subseteq f_2R\subseteq f_3R\subseteq\dots $ where $f_i\in R$ all have the same degree, then there is $d\in (R\cap B)\setminus\{0\}$ such that $df_i\in (R\cap B)[X]$. 
\end{pr}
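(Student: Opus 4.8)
The plan is to reduce every ascending chain of principal ideals in $R$ to a degree-stable one and then transport the question into the polynomial ring $(R\cap B)[X]$. Two structural facts will be used throughout. First, $R\cap B$ is exactly the set of degree-$0$ elements of $R$; it contains $A$ and is a domain, and if $f,g\in R$ have the same degree with $g\mid f$ in $R$, then the cofactor has degree $0$ and so lies in $R\cap B$. Second, because $R\subseteq B[X]$ with $B$ a field, any unit of $R$ is a nonzero constant whose inverse is again in $R$, so $R^{\ast}=(R\cap B)^{\ast}$; the same argument over $R\cap B$ gives $\big((R\cap B)[X]\big)^{\ast}=(R\cap B)^{\ast}$. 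I will use these to move the notions of "associate'' and "proper inclusion'' freely between $R$, $R\cap B$, and $(R\cap B)[X]$.

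The engine of the sufficiency direction is the fact that ACCP passes from a domain $S$ to $S[X]$. Given $g_1S[X]\subseteq g_2S[X]\subseteq\cdots$, the degrees $\deg g_i$ are non-increasing, hence eventually constant; on that tail each cofactor $g_i=h_ig_{i+1}$ has $\deg h_i=0$, so the leading coefficients satisfy an ascending chain $\mathrm{lc}(g_i)\,S\subseteq\mathrm{lc}(g_{i+1})\,S$ in $S$, which stabilizes by ACCP of $S$; hence the $h_i$ are eventually units of $S$ and the original chain stabilizes. Now for sufficiency ($\Leftarrow$) I assume $R\cap B$ has ACCP and the second stated condition, and take any chain $f_1R\subseteq f_2R\subseteq\cdots$ of nonzero ideals. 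As the degrees are non-increasing, after discarding a finite initial segment I may assume all $f_i$ share a common degree; the second condition then supplies one $d\in(R\cap B)\setminus\{0\}$ with $df_i\in(R\cap B)[X]$ for every $i$. Writing $f_i=h_if_{i+1}$ with $h_i\in R\cap B$ gives $df_i=h_i\,(df_{i+1})$, so the $(df_i)(R\cap B)[X]$ form an ascending chain in $(R\cap B)[X]$, which stabilizes by the lemma above. Cancelling the nonzero $df_{i+1}$ shows the relevant $h_i$ are units of $(R\cap B)[X]$, hence of $R\cap B$, hence of $R$, so $f_iR=f_{i+1}R$ eventually and $R$ has ACCP.

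For necessity ($\Rightarrow$), assume $R$ has ACCP. That $R\cap B$ inherits it is immediate: a chain $c_1(R\cap B)\subseteq c_2(R\cap B)\subseteq\cdots$ lifts to $c_1R\subseteq c_2R\subseteq\cdots$, and since $R^{\ast}=(R\cap B)^{\ast}$ a stabilization in $R$ pulls back to one in $R\cap B$. For the second condition, take a same-degree chain $\{f_i\}$. ACCP of $R$ stabilizes it at some index $N$, and because every member has the degree of $f_N$, each $f_i$ is a constant multiple $f_i=c_if_N$ with $c_i\in R\cap B$ (a unit when $i\ge N$). It therefore suffices to clear denominators in the single polynomial $f_N$: expressing each coefficient of $f_N\in B[X]$ as a fraction over $R\cap B$ and taking $d$ to be the product of those denominators yields $df_N\in(R\cap B)[X]$, whence $df_i=c_i\,(df_N)\in(R\cap B)[X]$ for all $i$.

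The main obstacle is precisely the uniform clearing of denominators in the forward direction: a priori each $f_i$ might demand its own denominator and no single $d$ could serve an infinite chain. The maneuver that removes this is to invoke ACCP of $R$ first, collapsing the whole same-degree chain onto constant multiples of one polynomial $f_N$ and thereby reducing an infinite problem to clearing the finitely many coefficients of $f_N$; this step is where the arithmetic of $B$ over $R\cap B$ enters, and it is the delicate point to get right. The complementary subtlety is the transfer of ACCP from $R\cap B$ to $(R\cap B)[X]$, handled by the leading-coefficient argument isolated above, which is exactly what turns the second stated condition into genuine stabilization back in $R$.
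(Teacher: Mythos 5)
The paper gives no argument to compare against: its ``proof'' of Proposition \ref{p2.1} is the bare citation ``\cite{mm2}, Proposition 2.1'', so your proposal must stand on its own. Your sufficiency direction does stand: the reduction to a same-degree tail, the transport via the common denominator $d$ into $(R\cap B)[X]$, the leading-coefficient proof that ACCP passes from a domain $S$ to $S[X]$ (this is exactly the paper's Lemma \ref{l2.3}), and the unit identifications $R^{\ast}=(R\cap B)^{\ast}=\big((R\cap B)[X]\big)^{\ast}$ are all correct and assembled correctly.

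The necessity direction, however, has a genuine gap, and it sits exactly at the step you yourself flagged as delicate. ``Expressing each coefficient of $f_N\in B[X]$ as a fraction over $R\cap B$'' tacitly assumes that $B$ lies in the quotient field of $R\cap B$; the hypotheses only say $B$ is a field containing $A$, and nothing forces coefficients of elements of $R$ to be fractions over $R\cap B$. In fact no argument can close this gap, because the statement as printed is false. Take $A=\mathbb{Z}$, $B=\mathbb{R}$, $R=\mathbb{Z}[X,\pi X]$, so $\mathbb{Z}[X]\subseteq R\subseteq\mathbb{R}[X]$. Since $\pi$ is transcendental over $\mathbb{Q}$, the elements $X$ and $\pi X$ are algebraically independent, so $R\cong\mathbb{Z}[U,V]$ is a UFD and in particular has ACCP, and one checks $R\cap\mathbb{R}=\mathbb{Z}$, which has ACCP. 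Yet for the ascending same-degree chain $f_1R\subseteq f_2R\subseteq\cdots$ with $f_i=\pi X$ for every $i$, there is no $d\in\mathbb{Z}\setminus\{0\}$ with $d\pi X\in\mathbb{Z}[X]$, since $d\pi\notin\mathbb{Z}$. So the forward implication fails, and with it your clearing-of-denominators maneuver. Your proof (and the proposition) is correct precisely in the Anderson--Anderson--Zafrullah setting of rings between $D[X]$ and $K[X]$ with $K$ the quotient field of $D$ (\cite{1}), where every element of $K$ genuinely is a fraction over $D\subseteq R\cap K$ and your choice of $d$ as a product of denominators of the finitely many coefficients of $f_N$ is legitimate; the paper's restatement silently drops the hypothesis $B=\mathrm{qf}(A)$, and your write-up inherits that omission by treating the fraction representation as automatic.
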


\begin{proof}
	\cite{mm2}, Proposition 2.1.
\end{proof}

Proposition \ref{p2.2} shows that between $A[X]$ and $A+XB[X]$ we can find a structure which satysfying ACCP condition. 

\begin{pr}
	\label{p2.2}
	Let $A$ be an integral domain, $B$ be a field such that $A\subset B$. Let $C$ be a domain such that $A[X]\subseteq C\subseteq A+XB[X]$. Suppose that for each $n\in\mathbb{N}_0$, there exists $a_n\in A\setminus\{0\}$ for all $f\in C$ with $\deg f\leq n$. Then $C$ has ACCP if and only if $A$ has ACCP.
\end{pr}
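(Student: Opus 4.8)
The plan is to apply Proposition~\ref{p2.1} directly, taking $R=C$. First I would verify that its hypotheses are met: since $A[X]\subseteq C\subseteq A+XB[X]\subseteq B[X]$, we have $A[X]\subseteq C\subseteq B[X]$, so Proposition~\ref{p2.1} is available for $R=C$. The whole argument then amounts to showing that, under the present standing assumption, the two conditions appearing in Proposition~\ref{p2.1} collapse to the single statement ``$A$ has ACCP''.

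The key preliminary computation is to identify $C\cap B$. Every element of $A+XB[X]$ has its degree-zero term in $A$, so the only constants lying in $C$ are those in $A$; that is, $C\cap B\subseteq(A+XB[X])\cap B=A$. Conversely $A\subseteq A[X]\subseteq C$ and $A\subseteq B$, hence $A\subseteq C\cap B$. Therefore $C\cap B=A$, and the first condition of Proposition~\ref{p2.1}, namely that $C\cap B$ has ACCP, is literally the statement that $A$ has ACCP.

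Next I would show that the second condition of Proposition~\ref{p2.1} is supplied automatically by the hypothesis of the present Proposition. Interpreted as intended, that hypothesis furnishes, for each $n\in\mathbb{N}_0$, a single element $a_n\in A\setminus\{0\}$ with $a_nf\in A[X]$ for every $f\in C$ of degree at most $n$ --- a uniform denominator clearing all bounded-degree elements of $C$ into $A[X]$. Now take any ascending chain $f_1C\subseteq f_2C\subseteq\cdots$ in which all $f_i\in C$ share one common degree $n$. Putting $d=a_n\in A\setminus\{0\}=(C\cap B)\setminus\{0\}$ gives $df_i\in A[X]=(C\cap B)[X]$ for every $i$, which is exactly the conclusion demanded by the second condition. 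Thus that condition holds unconditionally in our setting.

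Combining the two observations, Proposition~\ref{p2.1} reduces to the equivalence that $C$ has ACCP if and only if $C\cap B=A$ has ACCP, which is the assertion. The only point requiring care --- and the one I would treat as the main (minor) obstacle --- is matching the ``chain of fixed degree'' hypothesis in Proposition~\ref{p2.1} to the bounded-degree denominator hypothesis stated here: one must notice that a chain whose members all have a single common degree $n$ is already governed by the degree-$n$ denominator $a_n$, so no control over chains mixing different degrees is ever needed. Everything else is bookkeeping.
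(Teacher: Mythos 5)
Your proposal is correct and is essentially the argument the paper intends: the paper gives no proof of its own here, deferring entirely to \cite{mm2}, Proposition 2.2, and your derivation --- verifying $C\cap B=(A+XB[X])\cap B=A$ and observing that the uniform denominators $a_n$ automatically discharge the fixed-degree-chain condition, so that Proposition~\ref{p2.1} applied with $R=C$ collapses to the stated equivalence --- is exactly the specialization of Proposition~\ref{p2.1} that this citation encapsulates. Your reading of the paper's garbled hypothesis (namely that $a_nf\in A[X]$ for every $f\in C$ with $\deg f\leq n$) is also the intended one, and your closing remark correctly identifies the only point of contact that needs checking, that a same-degree chain is governed by the single denominator $a_n$.
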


\begin{proof}
\cite{mm2}, Proposition 2.2.
\end{proof}

The above Proposition is not obvious for arbitrary composition. This would be a valuable remark, as it would allow we to choose the smallest possible composite.

\medskip

\noindent
{\bf Question:} For subdomains $A_0, A_1, \dots, A_{n-1}$ of a field $B$, is the Proposition \ref{p2.2} valid for such domain $C$ satysfying $A_0[X]\subseteq C\subseteq A_0+A_1X+\dots + A_{n-1}X^{n-1} + X^nB[X]$, where the condition $A_0\subset A_1\subset\dots \subset A_{n-1}\subset B$ holds or not?

\medskip

Kim in \cite{2} proved very serious fact about ACCP monoid domain.

\begin{lm}
	\label{l2.3}
	Let $A$ be a domain. Then $A$ has ACCP if and only if $A[X]$ has ACCP.
\end{lm}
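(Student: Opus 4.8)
The plan is to prove both implications directly, leaning on two standard facts about a domain $A$: the units of $A[X]$ are precisely the units of $A$, i.e. $A[X]^{\ast}=A^{\ast}$, and the degree of a product of nonzero polynomials is the sum of the degrees. Throughout, stabilization of a chain of principal ideals $(c_i)\subseteq(c_{i+1})$ is equivalent to $c_i$ and $c_{i+1}$ being associates.

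For the easier implication I would assume $A[X]$ has ACCP and take an ascending chain $(a_1)\subseteq(a_2)\subseteq\cdots$ of principal ideals in $A$. Viewing each $a_i$ as a constant polynomial, the divisibility $a_{i+1}\mid a_i$ persists in $A[X]$, so $(a_1)A[X]\subseteq(a_2)A[X]\subseteq\cdots$ is an ascending chain of principal ideals there, which stabilizes by hypothesis. Once $(a_n)A[X]=(a_{n+1})A[X]$, the elements $a_n,a_{n+1}$ are associates in $A[X]$; since the associating unit lies in $A[X]^{\ast}=A^{\ast}$, they are already associates in $A$, so $(a_n)_A=(a_{n+1})_A$ and the original chain stabilizes.

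For the harder implication I would assume $A$ has ACCP and take an ascending chain $(f_1)\subseteq(f_2)\subseteq\cdots$ in $A[X]$ with each $f_i\neq0$. Every relation $f_{i+1}\mid f_i$ forces $\deg f_{i+1}\leq\deg f_i$, so the degrees form a non-increasing sequence of non-negative integers and are eventually constant, say equal to $n$ for all $i\geq M$. Passing to this tail, each quotient in $f_i=c_if_{i+1}$ has degree $0$, so $c_i\in A$ is a nonzero constant; comparing leading coefficients $a_i$ of $f_i$ gives $a_i=c_ia_{i+1}$, hence $a_{i+1}\mid a_i$ in $A$ and an ascending chain $(a_M)\subseteq(a_{M+1})\subseteq\cdots$ of principal ideals in $A$.

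Finally, ACCP in $A$ makes this chain of leading coefficients stabilize: for $i\geq N$ we get $(a_i)_A=(a_{i+1})_A$, which forces $c_i\in A^{\ast}$, so $f_i$ and $f_{i+1}$ are associates in $A[X]$ and $(f_i)=(f_{i+1})$. Thus the chain in $A[X]$ stabilizes, completing the proof. I expect the main obstacle to be the forward direction, specifically the step of first collapsing the chain to constant degree; only after that reduction can the problem be pushed down to leading coefficients, where the ACCP hypothesis on $A$ finishes the argument.
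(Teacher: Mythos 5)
Your proof is correct and is exactly the argument the paper intends: the paper itself gives no details, citing Kim's Corollary 2.2 with the remark that the lemma ``can be easily proved by comparing degrees,'' and your forward direction (stabilize the degrees first, then push the chain down to leading coefficients via $A[X]^{\ast}=A^{\ast}$ and cancellation in the domain) is precisely that degree-comparison argument, while your converse via extending principal ideals to $A[X]$ is the standard one. The only cosmetic omission is the routine reduction to chains of nonzero elements, which is harmless since once some $a_i$ (or $f_i$) is nonzero all later terms are nonzero.
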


\begin{proof}
	\cite{2}, Corollary 2.2. Can be easily proved by comparing degrees.
\end{proof}

It also turn out that ACCP property moves between $A$ and $A+XB[X]$. This is important because we do not have to choose a general polynomial, and we can limit the inclusion to the smallest composite needed. Such a significant limitation of a polynomial to a composite is important, e.g. in Galois theory in commutative rings.

\begin{tw}
	\label{p2.4}
	Let $A$ be an integral domain, $B$ be a field such that $A\subset B$. An $A$ has ACCP if and only if $A+XB[X]$ has ACCP.
\end{tw}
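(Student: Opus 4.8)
The plan is to realise $R := A + XB[X]$ as an intermediate ring $A[X] \subseteq R \subseteq B[X]$ and to work inside the framework of Proposition \ref{p2.1}, for which the basic observation is $R \cap B = A$: the only elements of $R$ that happen to be constants are the constants of $A$. Since $R$ is a subring of the domain $B[X]$ it is itself a domain, and because $B$ is a field its units satisfy $R^\ast = A^\ast$ (a unit must have degree $0$ and an invertible constant term, forcing it into $A^\ast$). These two facts are the pillars of both directions.

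For the implication ``$A + XB[X]$ has ACCP $\Rightarrow A$ has ACCP'' I would argue directly. Given an ascending chain $a_1 A \subseteq a_2 A \subseteq \cdots$ in $A$, the relations $a_i = a_{i+1} c_{i+1}$ with $c_{i+1} \in A$ lift to $a_1 R \subseteq a_2 R \subseteq \cdots$ in $R$; by ACCP in $R$ this stabilises, and since $R^\ast = A^\ast$ an equality $a_N R = a_{N+1} R$ means $a_N$ and $a_{N+1}$ differ by a unit of $A$, whence $a_N A = a_{N+1} A$. Thus the original chain stabilises. This half is routine.

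For the converse I would first perform the standard degree reduction. In any chain $f_1 R \subseteq f_2 R \subseteq \cdots$ write $f_i = f_{i+1} g_{i+1}$ with $g_{i+1} \in R$; comparing degrees in $B[X]$ shows $\deg f_i$ is non-increasing, hence eventually constant, so after deleting finitely many terms every $g_{i+1}$ is a nonzero constant, i.e. $g_{i+1} \in A \setminus \{0\}$, and a strict inclusion $f_i R \subsetneq f_{i+1} R$ forces $g_{i+1} \notin A^\ast$. Evaluating at $0$ gives $f_i(0) = f_{i+1}(0)\, g_{i+1}$ in $A$; when the constant terms $f_i(0)$ are nonzero this is an ascending chain of principal ideals of $A$ that is strict precisely when $g_{i+1}$ is a non-unit, so ACCP in $A$ bounds the number of strict steps and the chain in $R$ stabilises.

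The main obstacle --- and I expect it to be essentially the whole difficulty of the theorem --- is the complementary case in which the constant terms $f_i(0)$ all vanish, i.e. every $f_i$ is divisible by $X$. There the $A$-valued constant term carries no information: one has $f_i = X h_i$ with $h_i \in B[X]$ and $h_i = h_{i+1} g_{i+1}$, $g_{i+1} \in A$, and the entire obstruction lives in the field coefficients of $B[X]$, where every nonzero $g_{i+1}$ is automatically invertible. Controlling such chains is exactly the clearing condition of Proposition \ref{p2.1}, and it does not appear to be settled by ACCP of $A$ alone: one seems to need a bound on how the denominators of the $B$-coefficients grow along the chain, in the spirit of the degree-bounded hypothesis of Proposition \ref{p2.2}. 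This is where the real content of the statement resides, and where I would concentrate all the effort.
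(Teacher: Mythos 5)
Your two completed pieces are correct: since $R:=A+XB[X]$ satisfies $R\cap B=A$ and $R^{\ast}=A^{\ast}$, the implication from ACCP in $R$ to ACCP in $A$ goes through exactly as you say, and your degree reduction together with evaluation at $0$ disposes of every chain whose terms eventually have nonzero constant term. But the case you isolated as ``the whole difficulty'' --- all $f_i(0)=0$ --- is not a gap that more effort can close: the statement is false there. Because $B$ is a field containing $A$, it contains the quotient field of $A$. Take any nonzero nonunit $a\in A$ (such exists unless $A$ is a field) and put $f_n=X/a^{n}\in XB[X]\subseteq R$. Then $f_n=a\,f_{n+1}$ gives the ascending chain $f_1R\subseteq f_2R\subseteq\cdots$, and an equality $f_nR=f_{n+1}R$ would force $1/a$ to be a constant of $R$, i.e.\ $1/a\in A$, contradicting that $a$ is a nonunit. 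So for $A=\mathbb{Z}$, $B=\mathbb{Q}$ the domain $\mathbb{Z}+X\mathbb{Q}[X]$ admits the strictly ascending chain $(X)\subsetneq (X/2)\subsetneq (X/4)\subsetneq\cdots$ although $\mathbb{Z}$ has ACCP. Pushed one step further, your case analysis yields the correct statement: $A+XB[X]$ has ACCP if and only if $A$ is a field, since when $A$ is a field every cofactor $g_{i+1}$ arising after the degrees stabilize lies in $A\setminus\{0\}=A^{\ast}$ and the chain stops, while otherwise the chain above violates ACCP.

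Your instinct about where the content resides is also exactly right, and it is precisely where the paper's own proof breaks down. That proof invokes Proposition \ref{p2.2} by asserting, without justification, that for each $n$ there is $a_n\in A\setminus\{0\}$ clearing all $f\in A+XB[X]$ with $\deg f\leq n$ into $A[X]$. This hypothesis can never hold when $A\subsetneq B$: already for $n=1$ it would require $a_1b\in A$ for every $b\in B$, and applying this to $b/a_1\in B$ forces $B\subseteq A$. (The intermediate inclusion $A+XB[X]\subseteq A+XK[X]$ with $K$ the quotient field of $B$ is also empty content, as $B$ is already a field.) So the appeal to Proposition \ref{p2.2} is vacuous, and only the direction you proved directly --- ACCP for $A+XB[X]$ implies ACCP for $A$ --- survives; the converse needs either the hypothesis that $A$ be a field or a genuine denominator-bounding condition of the kind in Propositions \ref{p2.1} and \ref{p2.2}, which, as shown, the ring $A+XB[X]$ never satisfies. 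The same defect propagates to Corollaries \ref{c2.5} and \ref{c2.6}.
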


\begin{proof}
	From Proposition \ref{p2.2} we have $A[X]\subseteq A+XB[X]\subseteq A+XK[X]$, where $K$ be a qoutient field of $B$. We can to prove that for each $n\geq 0$, there exists $a_n\in A\setminus\{0\}$ for all $f\in A+XB[X]$ with $\deg f\leq n$. Because $A$ has ACCP then from Proposition \ref{p2.2} we get $A+XB[X]$ has ACCP. Conversly, because $A+XB[X]$ has ACCP then $A$ has ACCP.	
\end{proof}

The next facts are the conclusions of Theorem \ref{p2.4}.

\begin{cor}
	\label{c2.5}
	Let $A_0, A_1, \dots, A_{n-1}$ be subdomains of a field $B$ such that $A_0\subset A_1\subset \dots \subset B$. Let $C$ be a domain with $A_0[X]\subseteq C\subseteq A_0+A_1X+\dots +A_{n-1}X^{n-1}+X^nB[X]$. Suppose that for each $n\geq 0$, there exists $a_n\in A_0\setminus\{0\}$ for all $f\in C$ with $\deg f\leq n$. Then $C$ has ACCP if and only if $A_0$ has ACCP.
\end{cor}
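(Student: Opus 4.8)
The plan is to reduce the statement to Proposition \ref{p2.1} in exactly the way that the proof of Proposition \ref{p2.2} does, the only change being that the ambient composite $A+XB[X]$ is replaced by the taller composite $T_n=A_0+A_1X+\dots+A_{n-1}X^{n-1}+X^nB[X]$. First I would record the chain of inclusions $A_0[X]\subseteq C\subseteq T_n\subseteq B[X]$: the last inclusion holds because each $A_i$ is a subring of the field $B$, so every element of $T_n$ is an honest polynomial with coefficients in $B$. Since $A_0$ is a subring of a field it is an integral domain, so $C$ is now in precisely the shape required by Proposition \ref{p2.1}, with the role of $A$ played by $A_0$.

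Next I would identify $C\cap B$. An element of $C$ that happens to lie in $B$ is a constant polynomial, and by the very form of $T_n$ its constant term lies in $A_0$; conversely $A_0\subseteq A_0[X]\subseteq C$, so $C\cap B=A_0$. Proposition \ref{p2.1} then asserts that $C$ has ACCP if and only if two conditions hold at once: (a) $A_0=C\cap B$ has ACCP, and (b) for every ascending chain $f_1C\subseteq f_2C\subseteq\dots$ of elements of $C$ all of one and the same degree there is a nonzero $d\in A_0$ with $df_i\in A_0[X]$ for every $i$. It remains to dispatch (b). Given such a chain, let $n$ be the common degree of the $f_i$ and let $a_n\in A_0\setminus\{0\}$ be the element furnished by the standing hypothesis, so that $a_nf\in A_0[X]$ for every $f\in C$ with $\deg f\le n$. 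Taking $d=a_n$ clears the whole chain simultaneously, so (b) holds automatically and Proposition \ref{p2.1} collapses to the asserted equivalence: $C$ has ACCP if and only if $A_0$ has ACCP.

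I do not expect a genuine obstacle here, since the argument is just the degree-filtration reduction already used for Proposition \ref{p2.2}; the multi-level shape of $T_n$ never really enters except through the two harmless facts $T_n\subseteq B[X]$ and $C\cap B=A_0$. The only point that demands a word of care is the identification $C\cap B=A_0$, which is exactly where both the ascending condition $A_0\subset A_1\subset\dots\subset A_{n-1}\subset B$ and the lower inclusion $A_0[X]\subseteq C$ are used: dropping $A_0[X]\subseteq C$ could make $C\cap B$ a proper subring of $A_0$ and would force one to run Proposition \ref{p2.1} with that smaller ring in place of $A_0$, breaking the clean statement. With the hypotheses as given, this subtlety does not arise and the reduction is immediate.
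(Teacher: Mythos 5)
Your argument is correct, and it is worth pointing out that it supplies a proof where the paper in fact offers none: Corollary \ref{c2.5} is merely announced as one of ``the conclusions of Theorem \ref{p2.4}'', yet Theorem \ref{p2.4} by itself cannot yield it, since that theorem concerns only the full composite $A+XB[X]$ and says nothing about intermediate domains $C$ sitting below the multi-level composite $T_n=A_0+A_1X+\dots+A_{n-1}X^{n-1}+X^nB[X]$ --- indeed, immediately after Proposition \ref{p2.2} the paper poses exactly the statement of Corollary \ref{c2.5} as an open Question, so your reduction in effect answers that question affirmatively. Your route --- recording $A_0[X]\subseteq C\subseteq T_n\subseteq B[X]$, identifying $C\cap B=A_0$ (which, as you correctly stress, uses both the lower inclusion $A_0[X]\subseteq C$ and the shape of $T_n$), and then noting that the standing hypothesis hands you, for each common degree $n$ of a chain, the nonzero element $d=a_n\in A_0$ clearing the whole chain into $A_0[X]$ --- is precisely the degree-filtration reduction to Proposition \ref{p2.1} that underlies Proposition \ref{p2.2}, transplanted to the taller composite, and you are right that the multi-level structure enters only through the two facts you isolate. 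One caveat you silently (and reasonably) repaired: the hypothesis as printed, ``there exists $a_n\in A_0\setminus\{0\}$ for all $f\in C$ with $\deg f\leq n$'', is grammatically incomplete, and your reading $a_nf\in A_0[X]$ is the intended one, matching Proposition \ref{p2.2}; your verification of the second condition of Proposition \ref{p2.1} depends on that reading, so it deserves to be stated explicitly rather than assumed.
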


\begin{cor}
	\label{c2.6}
	Let $A_0, A_1, \dots, A_{n-1}$ be subdomains of a field $B$ such that $A_0\subset A_1\subset \dots \subset B$. An $A_0$ has ACCP if and only if $A_0+A_1X+\dots +A_{n-1}X^{n-1}+X^nB[X]$ has ACCP.
\end{cor}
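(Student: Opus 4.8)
The plan is to obtain Corollary \ref{c2.6} as the largest-$C$ instance of Corollary \ref{c2.5}, in exact parallel with the way Theorem \ref{p2.4} is read off from Proposition \ref{p2.2}. Write $T_n = A_0 + A_1X + \dots + A_{n-1}X^{n-1} + X^nB[X]$ and apply Corollary \ref{c2.5} with $C = T_n$. The inclusion hypothesis $A_0[X]\subseteq C\subseteq T_n$ is automatic: the right-hand containment is an equality, and the left-hand one holds because the tower is increasing, so $A_0\subseteq A_i$ for every $i$ and $A_0\subseteq B$, whence every polynomial with coefficients in $A_0$ already lies in $T_n$. Once the remaining (denominator) hypothesis of \ref{c2.5} is checked for $C = T_n$, that corollary returns exactly the stated equivalence.

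The implication ``$T_n$ has ACCP $\Rightarrow$ $A_0$ has ACCP'' I would prove directly, without appealing to \ref{c2.5}, using the elementary fact that $T_n^{\ast} = A_0^{\ast}$. Indeed the units of $B[X]$ are the nonzero constants, and such a constant lies in $T_n$ together with its inverse precisely when it lies in $A_0^{\ast}$. Given an ascending chain $a_1A_0\subseteq a_2A_0\subseteq\cdots$ in $A_0$, the induced chain $a_1T_n\subseteq a_2T_n\subseteq\cdots$ stabilizes by ACCP of $T_n$; and an equality $a_iT_n = a_{i+1}T_n$ forces $a_i = a_{i+1}u$ with $u\in T_n^{\ast} = A_0^{\ast}$, so the original chain stabilizes in $A_0$ as well.

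For the converse ``$A_0$ has ACCP $\Rightarrow$ $T_n$ has ACCP'' I would start from a chain $f_1T_n\subseteq f_2T_n\subseteq\cdots$ in $T_n\subseteq B[X]$. Divisibility in the domain $B[X]$ makes the degrees $\deg f_i$ non-increasing, hence eventually equal to some fixed $d$, say for $i\geq N$. For such $i$ the cofactor $c_i$ given by $f_i = f_{i+1}c_i$ has degree $0$, so $c_i\in T_n\cap B = A_0$. If one can produce a single $\delta\in A_0\setminus\{0\}$ with $\delta f_i\in A_0[X]$ for all $i\geq N$, then $\delta f_i = (\delta f_{i+1})c_i$ with $c_i\in A_0\subseteq A_0[X]$ exhibits an ascending chain $(\delta f_N)A_0[X]\subseteq(\delta f_{N+1})A_0[X]\subseteq\cdots$ in $A_0[X]$; by Lemma \ref{l2.3} this ring has ACCP, so the chain stabilizes, which forces the $c_i$ to become units and hence $f_iT_n = f_{i+1}T_n$ eventually.

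I expect the crux to be the production of that single denominator $\delta$, i.e.\ the verification of the denominator hypothesis of \ref{c2.5} for the full composite $T_n$; this is the analogue of the verification carried out for $A + XB[X]$ in the proof of Theorem \ref{p2.4}. The difficulty is that along the chain the coefficients of the $f_i$ grow steadily ``more divisible'' -- each $f_i$ is obtained from $f_N$ by dividing by a product of elements of $A_0$ -- so a $\delta$ that clears $f_N$ into $A_0[X]$ need not clear the later $f_i$. I would resolve this by clearing coefficients layer by layer across the groups $A_1\subset A_2\subset\cdots\subset A_{n-1}\subset B$, using that the tower is increasing and that $B$ is a field, so that the needed denominators exist and can be chosen coherently inside $A_0$; the tail coefficients in $X^nB[X]$ are handled exactly as the single tail $XB[X]$ is in Theorem \ref{p2.4}. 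With $\delta$ in hand, Corollary \ref{c2.5} applies verbatim and the equivalence of \ref{c2.6} follows.
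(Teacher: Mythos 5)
Your reduction to Corollary \ref{c2.5} with $C=T_n$ is indeed the route the paper intends (the paper gives no separate proof of Corollary \ref{c2.6}, presenting it as a conclusion of Theorem \ref{p2.4}, whose proof contains the same unproved claim ``we can prove that for each $n\geq 0$ there exists $a_n$\dots''), and your first direction, via $T_n^{\ast}=A_0^{\ast}$ and stabilization of the induced chain $a_1T_n\subseteq a_2T_n\subseteq\cdots$, is correct. The fatal problem is precisely the step you flagged as the crux: the common denominator $\delta$ does not exist in general, and no ``layer by layer'' clearing inside the tower can produce it. The hypothesis that $B$ is a field works against you, not for you: $B$ contains the inverses of all nonzero nonunits of $A_0$, and these inverses can hide in the tail $X^nB[X]$. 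Concretely, take $n=1$, $A_0=\mathbb{Z}$, $B=\mathbb{Q}$, and $f_i=2^{-i}X\in T_1=\mathbb{Z}+X\mathbb{Q}[X]$. Then $f_i=2f_{i+1}$, so $f_1T_1\subseteq f_2T_1\subseteq\cdots$ is an ascending chain of the constant-degree type your argument must handle, yet any $\delta\neq 0$ with $\delta f_i\in\mathbb{Z}[X]$ for all $i$ would have to be divisible by $2^i$ for every $i$; no such $\delta$ exists, so your appeal to Lemma \ref{l2.3} never gets off the ground.

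Worse, this is not merely a gap in the proof: the statement itself is false, so no argument can close it. In the example above the chain is strictly ascending, because $f_{i+1}/f_i=1/2$ is a constant and the constant term of any element of $T_1$ must lie in $\mathbb{Z}$; hence $\mathbb{Z}+X\mathbb{Q}[X]$ fails ACCP although $\mathbb{Z}$ has ACCP (this is essentially the classical example from \cite{0}, \cite{1} showing $D+XK[X]$ fails ACCP whenever $D$ is not a field). The same obstruction works for every $n\geq 1$: if $a\in A_0$ is a nonzero nonunit, then $a^{-i}X^n\in X^nB[X]\subseteq T_n$ and $(X^n)\subsetneq (a^{-1}X^n)\subsetneq (a^{-2}X^n)\subsetneq\cdots$ is a strictly ascending chain of principal ideals of $T_n$, since $a^{-i}X^n=a\cdot a^{-(i+1)}X^n$ while the quotient $a^{-1}$ is a constant not lying in $A_0$, hence not in $T_n$. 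So $T_n$ can have ACCP only when $A_0$ is a field, in which case the equivalence is vacuous in the interesting direction. What survives of your proposal is the implication ``$T_n$ has ACCP $\Rightarrow$ $A_0$ has ACCP'' (your units argument is sound, and it is the content of Lemma \ref{l2.10} restricted to this setting); the converse, in Corollary \ref{c2.6} and already in Theorem \ref{p2.4}, should be rejected, and the correct setting for a positive statement is the tower of fields as in Theorem \ref{t2}.
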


\medskip

Next Lemmas coming from Kim \cite{2} are results about ACCP properties in monoid domains.

\begin{lm}
	\label{l2.9}
	Let $S\subseteq T$ be an extension of torsion-free cancellative monoids. If $T$ satisfies ACCP and $T^{\ast}\cap S=S^{\ast}$, then $S$ satisfies the ACCP.
\end{lm}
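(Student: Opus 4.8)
The plan is to translate the statement about monoids into a statement about divisibility and then show that an ascending chain of principal ideals in $S$ lifts to one in $T$. Working with the monoid operation written multiplicatively for clarity of the divisibility language, recall that in a torsion-free cancellative monoid $S$ an ascending chain of principal ideals corresponds to a sequence $s_1, s_2, s_3, \dots$ where each $s_{i+1}$ divides $s_i$ in $S$, i.e.\ $s_i = s_{i+1} u_i$ for some $u_i \in S$. To say $S$ satisfies ACCP is exactly to say every such chain stabilizes: eventually $s_i$ and $s_{i+1}$ are associates in $S$ (they differ by a unit of $S$).

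First I would fix such a descending divisibility chain $s_1, s_2, \dots$ in $S$ and simply regard it as a divisibility chain in $T$, which is legitimate because $S \subseteq T$ is a submonoid, so each factorization $s_i = s_{i+1} u_i$ with $u_i \in S \subseteq T$ is also a factorization in $T$. Since $T$ satisfies ACCP, this chain must stabilize in $T$: there is an index $N$ such that for all $i \geq N$, the elements $s_i$ and $s_{i+1}$ are associates in $T$, meaning $u_i \in T^{\ast}$ for all $i \geq N$.

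The crucial step is to descend this back to $S$. For each $i \geq N$ I have $u_i \in T^{\ast}$, but also $u_i \in S$ by construction. Hence $u_i \in T^{\ast} \cap S$, and the hypothesis $T^{\ast} \cap S = S^{\ast}$ forces $u_i \in S^{\ast}$. This means $s_i$ and $s_{i+1}$ are associates \emph{in} $S$ for every $i \geq N$, i.e.\ the chain of principal ideals in $S$ has stabilized. Since the original chain was arbitrary, $S$ satisfies ACCP.

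The main obstacle, and the reason the hypothesis $T^{\ast} \cap S = S^{\ast}$ is indispensable, is precisely this last descent: a priori an element of $S$ could become a unit only after passing to the larger monoid $T$, which would let a chain stabilize in $T$ while continuing to strictly ascend in $S$. The torsion-free cancellative assumption is what guarantees the divisibility-chain reformulation of ACCP is clean (cancellativity makes associate classes well behaved and the quotient group available), so once that dictionary is set up the argument is essentially a one-line application of the unit-intersection condition. I would take care only to verify that "stabilizes in $T$" genuinely yields $u_i$ a unit of $T$ rather than merely a weaker eventual-equality of ideals, but in a cancellative monoid equality of principal ideals $s_i T = s_{i+1} T$ is equivalent to associateness, so no difficulty arises there.
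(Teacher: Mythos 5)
Your proof is correct and takes the expected route: the paper itself gives no argument for this lemma, deferring entirely to Kim (\cite{2}, Proposition 1.2(1)), and the proof there is exactly the one you reconstruct --- lift the divisibility chain from $S$ to $T$, stabilize it using ACCP in $T$, and descend associateness via $T^{\ast}\cap S=S^{\ast}$. Your closing verification that in a cancellative monoid equality of principal ideals $s_iT=s_{i+1}T$ is equivalent to the cofactor $u_i$ being a unit of $T$ is precisely the point where a sloppier write-up could go wrong, and you handle it correctly. One minor observation: your argument never actually uses torsion-freeness --- cancellativity alone suffices for the dictionary between chains of principal ideals and divisibility --- so that hypothesis is present only to match the standing conventions for monoids underlying monoid domains, not because the proof needs it.
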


\begin{proof}
	\cite{2} Proposition 1.2. (1).
\end{proof}

\begin{lm}
	\label{l2.10}
	Let $D$ be an integral domain, $S$ a torsion-free cancellative additive monoid, and $D[S]$ the monoid domain. If $D[S]$ satisfies ACCP, then $D$ and $S$ satisfy ACCP.
\end{lm}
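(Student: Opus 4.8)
The plan is to exploit the two natural embeddings of $D$ and $S$ into $D[S]$ and to \emph{lift} ascending chains of principal ideals from $D$ and from $S$ to $D[S]$, where they must stabilize by hypothesis; the real work is then to \emph{descend} the resulting equalities back to $D$ and $S$. Recall that $D$ embeds in $D[S]$ as the constants $d\mapsto dX^0$, while $S$ embeds via $s\mapsto X^s$. The divisibility relations match up: for $s_1+S\subseteq s_2+S$ in $S$ one has $s_1=s_2+t$ with $t\in S$, so $X^{s_1}=X^{s_2}X^t\in X^{s_2}D[S]$, and the analogous statement holds for constants in $D$. Thus an ascending chain of principal ideals in $D$ (resp. in $S$) produces an ascending chain of principal ideals in $D[S]$.

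The key technical observation, which I would isolate first, is that multiplication by a nonzero monomial preserves the cardinality of the support. Writing $f=\sum_j a_j X^{u_j}$ in canonical form (distinct $u_j$, nonzero $a_j$) and multiplying by $cX^s$ with $c\neq 0$, the exponents $s+u_j$ remain pairwise distinct because $S$ is cancellative, and the coefficients $ca_j$ remain nonzero because $D$ is a domain. Consequently, if a product $fg$ in $D[S]$ is a monomial, then both $f$ and $g$ must be monomials. This is the one place where the cancellativity of $S$ and the domain hypothesis on $D$ are genuinely used.

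With this in hand the descent is routine. For $D$: given $d_1D\subseteq d_2D\subseteq\cdots$, lift it to $d_1D[S]\subseteq d_2D[S]\subseteq\cdots$, which stabilizes, say from index $N$ on; for $i\geq N$ the equality $d_iD[S]=d_{i+1}D[S]$ gives $d_i=d_{i+1}f$ and $d_{i+1}=d_ig$, and since $d_i,d_{i+1}$ are constants (monomials supported at $0$), the observation forces $f,g$ to be monomials, while comparing supports forces them to lie in $D$, whence $d_iD=d_{i+1}D$. For $S$: given $s_1+S\subseteq s_2+S\subseteq\cdots$, lift it to $X^{s_1}D[S]\subseteq X^{s_2}D[S]\subseteq\cdots$, which stabilizes from some $N$; for $i\geq N$ the equality $X^{s_i}D[S]=X^{s_{i+1}}D[S]$ yields $X^{s_i}=X^{s_{i+1}}f$ and $X^{s_{i+1}}=X^{s_i}g$, the observation forces $f=aX^t$ and $g=bX^u$ with $t,u\in S$, and reading off exponents gives $s_i=s_{i+1}+t$ and $s_{i+1}=s_i+u$, i.e. $s_i+S=s_{i+1}+S$. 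In both cases the original chain stabilizes, so $D$ and $S$ satisfy ACCP.

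I expect the only genuine subtlety to be the support-cardinality lemma together with the care needed to conclude that a factor of a monomial is again a monomial lying in the correct copy (the constants $D$, or the monomials $X^s$); everything else is bookkeeping that translates between the language of principal ideals and the language of divisibility in $D$, $S$, and $D[S]$.
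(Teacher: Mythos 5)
Your overall architecture --- lift chains along the embeddings $d\mapsto dX^0$ and $s\mapsto X^s$, stabilize in $D[S]$ by hypothesis, then descend --- is sound, and it is essentially the standard route (the paper gives no proof of its own here; it simply cites Kim's Proposition~1.5, which argues along these lines). The descent computations are also fine \emph{once} one knows that every factor of a nonzero monomial in $D[S]$ is again a monomial.

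That pivotal claim is exactly where your proof has a genuine gap. Your ``key technical observation'' states that multiplication by a nonzero monomial preserves the cardinality of the support; from this you conclude ``consequently, if a product $fg$ in $D[S]$ is a monomial, then both $f$ and $g$ must be monomials.'' That is a non sequitur: the observation controls products in which one factor is already known to be a monomial (at best it shows $f$ is a monomial if and only if $g$ is), but it says nothing that rules out a product of two non-monomials collapsing to a single term. Worse, your accounting of hypotheses (``this is the one place where the cancellativity of $S$ and the domain hypothesis on $D$ are genuinely used'') omits torsion-freeness, and the claim is actually \emph{false} without it: take $S=\mathbb{Z}/2\mathbb{Z}$ (written additively; cancellative, but with torsion) and $D=\mathbb{Q}$, so that $X^{\bar 1}X^{\bar 1}=X^{\bar 0}$; then $\bigl(1+2X^{\bar 1}\bigr)\bigl(1-2X^{\bar 1}\bigr)=-3X^{\bar 0}$ is a monomial with non-monomial factors, even though $S$ is cancellative and $D$ is a domain. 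The correct argument needs torsion-freeness in an essential way: embed $S$ into its quotient group $G$, which is torsion-free abelian and hence admits a total order compatible with addition; writing $f,g$ in canonical form with exponents $u_1<\dots<u_m$ and $v_1<\dots<v_n$, the extreme exponents $u_1+v_1$ and $u_m+v_n$ of $fg$ carry the nonzero coefficients $a_1b_1$ and $a_mb_n$ (this is where $D$ being a domain enters) and are distinct unless $m=n=1$, so $fg$ has at least two terms whenever $f$ or $g$ does. With this lemma substituted for your observation, the rest of your proof goes through verbatim.
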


\begin{proof}
	\cite{2}, Proposition 1.5.
\end{proof}

Next Theorem is the answer about question from Kim \cite{2} Question 1.6. In \cite{2} Proposition 1.5 (1) we have an implication. Kim asked that are the sufficient conditions in \cite{2} Proposition 1.5 (1) for the monoid domain to satisfy ACCP, necessary.

\begin{tw}
	\label{p2.11}
	Let $A$ be an integral domain and $B$ be a field such that $A\subset B$ and $A[S]^{\ast}=B[S]^{\ast}$. Let $S$ be a torsion-free cancellative monoid. Both $A$ and $B[S]$ satisfy ACCP if and only if $A[S]$ satisfies ACCP.
\end{tw}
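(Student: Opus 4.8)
The plan is to treat the two implications separately, exploiting the inclusion of domains $A[S]\subseteq B[S]$ together with the hypothesis $A[S]^{\ast}=B[S]^{\ast}$, which I will use in the equivalent form $B[S]^{\ast}\cap A[S]=A[S]^{\ast}$. Throughout, $S$ torsion-free and cancellative guarantees that both $A[S]$ and $B[S]$ are integral domains, so nonzero factors may be cancelled freely.

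First I would prove the implication ``$A$ and $B[S]$ have ACCP $\Rightarrow A[S]$ has ACCP''; in fact only the ACCP of $B[S]$ enters here, and the argument is the analogue for the multiplicative monoids of the domains, in the spirit of Lemma~\ref{l2.9}. Take an ascending chain $f_1A[S]\subseteq f_2A[S]\subseteq\cdots$ of principal ideals in $A[S]$, so $f_n=f_{n+1}c_n$ with $c_n\in A[S]$, and view it inside $B[S]$ as $f_1B[S]\subseteq f_2B[S]\subseteq\cdots$. Since $B[S]$ has ACCP this chain stabilises, say $f_nB[S]=f_{n+1}B[S]$ for all $n\ge N$, whence $f_n=f_{n+1}u_n$ with $u_n\in B[S]^{\ast}$. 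Cancelling $f_{n+1}\ne 0$ in the domain $B[S]$ gives $c_n=u_n\in B[S]^{\ast}\cap A[S]=A[S]^{\ast}$, so $f_nA[S]=f_{n+1}A[S]$ for $n\ge N$ and $A[S]$ has ACCP.

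For the converse ``$A[S]$ has ACCP $\Rightarrow A$ and $B[S]$ have ACCP'' I would first invoke Lemma~\ref{l2.10} with $D=A$: it delivers at once that both $A$ and the monoid $S$ satisfy ACCP. This settles the claim for $A$ and, crucially, extracts the ACCP of $S$. It then remains to deduce the ACCP of $B[S]$ from the facts that $B$ is a field and $S$ has ACCP; this is the sufficiency direction of Kim's result (\cite{2}, Proposition 1.5(1)), and it is the step I expect to be the main obstacle.

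To carry out that last step I would fix a total order $\le$ on the quotient group $G$ of $S$ (available because $G$ is torsion-free abelian) and, for $0\ne f\in B[S]$, record the least and greatest exponents $v(f),w(f)\in S$ occurring in its support. Because $B$ is a field there is no cancellation of extreme terms, so $v$ and $w$ are additive on products; hence a divisibility chain $g_1B[S]\subseteq g_2B[S]\subseteq\cdots$ forces the ascending principal chains $v(g_n)+S$ and $w(g_n)+S$ in $S$, which stabilise by ACCP of $S$ and thereby confine the extreme exponents $v(h_n),w(h_n)$ of each cofactor $h_n$ to lie in $U(S)$. The delicate point—and the reason the hypothesis $A[S]^{\ast}=B[S]^{\ast}$ is imposed—is that a cofactor whose extreme exponents are units of $S$ need not itself be a unit of $B[S]$ once $S$ has nontrivial units; pinning down $B[S]^{\ast}$ is exactly what forces these cofactors to be genuine units and makes the chain terminate. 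I would therefore close the argument either by reducing to the case where $S$ has no nontrivial units and appealing to Proposition 1.5(1) of \cite{2}, or by a direct well-foundedness argument on the pair $(v,w)$ combined with the identification of $B[S]^{\ast}$.
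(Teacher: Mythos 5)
Your first implication is fine: embedding the chain $f_1A[S]\subseteq f_2A[S]\subseteq\cdots$ into $B[S]$, stabilizing it there by ACCP of $B[S]$, and converting the cofactors into units of $A[S]$ via $B[S]^{\ast}\cap A[S]=A[S]^{\ast}$ is exactly the unit-transfer argument of Lemma~\ref{l2.9}, and it is actually more explicit than the paper, which for this direction only writes ``similar to \cite{2}, Proposition 1.5''. In the converse direction, deducing ACCP for $A$ from Lemma~\ref{l2.10} is also precisely what the paper does.

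The genuine gap is the remaining step, where you commit to proving the intermediate statement: \emph{$B$ a field and $S$ an ACCP monoid imply $B[S]$ has ACCP}. In the stated generality (torsion-free cancellative $S$, possibly with nontrivial units) this statement is false, and it fails exactly at the ``delicate point'' you flagged. Take $S=\mathbb{Q}$ as an additive monoid: it is torsion-free and cancellative, and since every principal ideal $q+\mathbb{Q}$ equals $\mathbb{Q}$, it satisfies ACCP vacuously. Yet in $B[\mathbb{Q}]$ the chain $(X^{1/2}-1)\subsetneq (X^{1/4}-1)\subsetneq (X^{1/8}-1)\subsetneq\cdots$ is strictly ascending: one has $X^{1/2^{n}}-1=\bigl(X^{1/2^{n+1}}-1\bigr)\bigl(X^{1/2^{n+1}}+1\bigr)$, and $X^{1/2^{n+1}}+1$ is not a unit, because by additivity of your extreme-exponent maps $v$ and $w$ a unit of $B[S]$ must satisfy $v=w$, i.e., be a trivial unit $bX^{u}$ with $b\in B^{\ast}$, $u\in S^{\ast}$. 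Note that the cofactors here have extreme exponents $0$ and $1/2^{n+1}$, both lying in $S^{\ast}=\mathbb{Q}$, exactly as your $(v,w)$ analysis predicts, so no well-foundedness argument on the pair $(v,w)$ can terminate the chain. Your proposed rescues do not close this: since units of a monoid domain over a torsion-free cancellative monoid are trivial, the hypothesis $A[S]^{\ast}=B[S]^{\ast}$ amounts to $A^{\ast}=B^{\ast}$, a condition on the coefficient rings that says nothing about $S^{\ast}$; and Kim's sufficiency result (\cite{2}, Proposition 1.5(1)) carries extra hypotheses on (the quotient group of) $S$ that rule out examples like $\mathbb{Q}$ but are not granted by the theorem, while there is no general reduction to the reduced case precisely because the obstruction lives in $S^{\ast}$. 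What your $v/w$ argument does prove, correctly and cleanly, is the case $S^{\ast}=\{0\}$. By contrast, the paper's own proof of this direction never routes through ``$S$ ACCP $\Rightarrow B[S]$ ACCP'' at all: it transfers a divisibility chain from $B[S]$ directly into $A[S]$ (reducing to irreducibles and invoking $A^{\ast}=B^{\ast}$) and stabilizes it there using the assumed ACCP of $A[S]$ --- a route that bypasses your false intermediate lemma, although it tacitly requires the members of the chain in $B[S]$ to lie in $A[S]$, which is itself only guaranteed under hypotheses that make the situation degenerate.
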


\begin{proof}
	($\Rightarrow$) The proof is similar to \cite{2}, Proposition 1.5.
	
	\medskip
	
	($\Leftarrow$) From Lemma \ref{l2.10}, since $A[S]$ has ACCP, then $A$ has ACCP.
	Now, consider $f_1, f_2, \dots \in B[S]$ such that $\dots, f_3\mid f_2, f_2\mid f_1$. Without loss of generality, we can assume that $f_1, f_2, \dots \in\Irr B[S]$ because every ACCP-domain is atomic. Since $A^{\ast}=B^{\ast}$, so $f_1, f_2, \dots \in\Irr A[S]$. By assumption $A[S]$ has ACCP, so there exists $n\geqslant 1$ such that $f_n\mid f_{n-1}, \dots, f_3\mid f_2, f_2\mid f_1$. We get $(f_1)\subseteq (f_2)\subseteq \dots \subseteq (f_n)=(f_{n+1})=\dots $ in $B[S]$ which is stationary.\\
\end{proof}

Recall that each ACCP-domain is atomic. Hence, all previous results about the ACCP-domains hold for the atomic domains. We complete the knowledge about the atomicity condition in monoid domains.

\begin{lm}
	\label{l3.1}
	Let $D$ be an integral domain, $S$ a torsion-free cancellative monoid, and $D[S]$ the monoid domain. If $D[S]$ be atomic, then $D$ and $S$ be atomic.
\end{lm}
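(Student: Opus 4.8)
The plan is to imitate the ACCP argument of Lemma \ref{l2.10}, but to track factorizations directly rather than chains of ideals. The essential tool is a compatible order on $S$: since $S$ is torsion-free and cancellative, its quotient group $G$ is a torsion-free abelian group and hence admits a total order $\leq$ compatible with addition, which restricts to $S$. For a nonzero $f=\sum_g a_gX^g\in D[S]$ I would set $\deg f$ to be the largest exponent in its support and $\mathrm{ord}\,f$ the smallest. Because $D$ is a domain and $S$ is cancellative, leading and trailing terms cannot cancel in a product, so $\deg(fg)=\deg f+\deg g$ and $\mathrm{ord}(fg)=\mathrm{ord}\,f+\mathrm{ord}\,g$; moreover $\deg f\geq\mathrm{ord}\,f$, with equality exactly when $f$ is a monomial $aX^g$. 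From these identities one reads off the units of $D[S]$, namely $f\in D[S]^{\ast}$ iff $f=uX^g$ with $u\in D^{\ast}$ and $g\in S^{\ast}$ (where $S^{\ast}$ is the group of invertible elements of $S$); in particular a nonunit of $D$ viewed as a constant stays a nonunit in $D[S]$, and $X^s$ is a nonunit whenever $s\in S\setminus S^{\ast}$. Two further observations complete the toolkit: a \emph{splitting principle} --- if $\pi_1\cdots\pi_k$ is a monomial then each $\pi_i$ is a monomial, since $\sum_i(\deg\pi_i-\mathrm{ord}\,\pi_i)=0$ is a sum of elements $\geq 0$ in $G$ --- and an \emph{atom dictionary}: $X^s$ is an atom of $D[S]$ iff $s$ is an atom of $S$, and a nonunit $a\in D$ is an atom of $D[S]$ iff it is an atom of $D$.

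For atomicity of $S$ I would take a nonunit $s\in S$, so that $X^s$ is a nonzero nonunit of $D[S]$, and use the hypothesis to write $X^s=\pi_1\cdots\pi_k$ with each $\pi_i$ an atom. The splitting principle forces each $\pi_i=a_iX^{s_i}$ to be a monomial, and comparing the coefficient of $X^s$ gives $\prod_i a_i=1$, so every $a_i\in D^{\ast}$ and $\pi_i$ is associate to $X^{s_i}$. By the atom dictionary each $s_i$ is then an atom of $S$, and $s=\sum_i s_i$ displays $s$ as a finite sum of atoms.

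For atomicity of $D$ I would take a nonzero nonunit $d\in D$, which is a nonunit of $D[S]$, and factor $d=\pi_1\cdots\pi_k$ into atoms. Since $d$ is a constant (so $\deg d=\mathrm{ord}\,d=0$), the splitting principle again makes each $\pi_i=a_iX^{s_i}$ a monomial, and comparing coefficients yields $\prod_i a_i=d$ and $\sum_i s_i=0$. An atom $\pi_i=a_iX^{s_i}$ cannot have $a_i$ a nonunit of $D$ and $s_i$ a nonunit of $S$ simultaneously, as that would split it; hence for each $i$ either $a_i\in D^{\ast}$, or $a_i$ is a nonunit with $s_i\in S^{\ast}$, in which case $\pi_i$ is associate to $a_i$ and so $a_i$ is an atom of $D$. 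Absorbing the finitely many unit factors into a single $u\in D^{\ast}$, I obtain $d=u\prod_j a_j$ with each $a_j$ an atom of $D$; the product is nonempty because $d$ is a nonunit, so $d$ is a finite product of atoms and $D$ is atomic.

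The step I expect to require the most care is the choice and use of the compatible total order on $G$: the multiplicativity of $\deg$ and $\mathrm{ord}$, the splitting principle, and the unit description all rest on it. The other delicate point is bookkeeping with the monoid's own unit group $S^{\ast}$, so as not to conflate ``atom of $S$'' with ``nonzero element of $S$'' (this is exactly what rules out the degenerate case where $d$ would be expressed only through unit monomials). Once these are in place, the remaining coefficient-comparison arguments are routine.
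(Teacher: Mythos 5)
Your proof is correct. Note, however, that the paper contains no argument of its own for this lemma: its ``proof'' is only the citation \cite{2}, Proposition 1.4 (Kim), so your writeup actually supplies in full what the paper delegates to the literature. The machinery you assemble --- a compatible total order on the quotient group $G$ (Levi's theorem for torsion-free abelian groups), multiplicativity of $\deg$ and $\mathrm{ord}$ over the domain $D$, the resulting description of units as $uX^{g}$ with $u\in D^{\ast}$ and $g\in S^{\ast}$, and the splitting principle for monomials --- is the standard technique in this area and is essentially how Kim's cited proposition is proved, so you have not diverged in method so much as made the argument self-contained; that is a genuine improvement in a paper whose other ACCP/atomicity lemmas (\ref{l2.9}, \ref{l2.10}) are likewise bare citations. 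Two small observations on your $D$-atomicity step. First, the relation $\sum_i s_i=0$ that you extract by comparing exponents already forces every $s_i\in S^{\ast}$ (in a cancellative monoid, summands of the identity are invertible), so the case ``$a_i\in D^{\ast}$ but $s_i$ a nonunit of $S$'' cannot occur and your dichotomy collapses; this also repairs a slight looseness in the phrase ``absorbing the finitely many unit factors,'' since a factor $a_iX^{s_i}$ with $a_i\in D^{\ast}$ is a unit of $D[S]$ only once one knows $s_i\in S^{\ast}$. Second, and alternatively, your argument survives even without that remark, because the coefficient identity $d=\prod_i a_i$ holds in $D$ itself, so absorbing the unit \emph{coefficients} (rather than unit factors of $D[S]$) into $u\in D^{\ast}$ is already legitimate. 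Either reading makes the proof complete.
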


\begin{proof}
	\cite{2}, Proposition 1.4.
\end{proof}

Next Theorem is similarly to \ref{p2.11}.

\begin{tw}
	\label{p3.2}
	Let $A$ be an integral domain and $B$ be a field such that $A\subset B$ with $A[S]^{\ast}=B[S]^{\ast}$. Let $S$ be a torsion-free cancellative monoid. 
	Both $A$ and $B[S]$ be atomic if and only if $A[S]$ be atomic. 
\end{tw}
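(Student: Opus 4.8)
The plan is to prove both implications in parallel with the proof of Theorem \ref{p2.11}, replacing ``ACCP'' by ``atomic'' throughout and leaning on Lemma \ref{l3.1} for the easy halves.

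For the implication $A[S]$ atomic $\Rightarrow$ $A$ and $B[S]$ atomic, I would first apply Lemma \ref{l3.1} to the monoid domain $A[S]$: atomicity of $A[S]$ forces both $A$ and $S$ to be atomic, so $A$ is atomic with no further work. It remains to show $B[S]$ is atomic, and the mechanism will be the hypothesis $A[S]^{\ast}=B[S]^{\ast}$. The clean observation I would establish first is that an element $g\in A[S]$ which is irreducible in $B[S]$ is automatically irreducible in $A[S]$: any proper factorization $g=uv$ in $A[S]$ is also a factorization in $B[S]$, and since the unit groups coincide, $u$ and $v$ remain nonunits in $B[S]$, contradicting irreducibility of $g$ there. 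Using the equality of unit groups in this way I would match $\Irr A[S]$ with $\Irr B[S]$ up to associates, then feed a factorization of a nonzero nonunit of $B[S]$ through this correspondence back into the atomic domain $A[S]$, where it terminates, and push the resulting finite factorization forward to $B[S]$.

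For the converse $A$ and $B[S]$ atomic $\Rightarrow$ $A[S]$ atomic, I would take a nonzero nonunit $f\in A[S]\subseteq B[S]$ and use atomicity of $B[S]$ to write $f=g_1\cdots g_k$ with each $g_i\in\Irr B[S]$. The task is to convert this into a factorization inside $A[S]$. Using $A[S]^{\ast}=B[S]^{\ast}$, I would normalize the factors by units so that each normalized factor lies in $A[S]$ and the accumulated unit is trivial; the coefficients, which a priori only live in $B$, are controlled by factoring the content of $f$ into atoms of $A$ (these are atoms of $A[S]$ as well), which is possible because $A$ is atomic. Reassembling the content atoms from $A$ with the normalized factors gives a finite decomposition of $f$ into atoms of $A[S]$.

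The main obstacle is exactly this transfer of factorizations across $A[S]\subseteq B[S]$: since $B$ is a field but $A$ is only a domain, irreducibility is not automatically preserved in either direction, and the hypothesis $A[S]^{\ast}=B[S]^{\ast}$ is the only tool tying the two atom sets together. Making the correspondence $\Irr A[S]\leftrightarrow\Irr B[S]$ precise --- in particular disentangling the coefficient (content) behaviour governed by $A$ from the exponent behaviour governed by $S$, and checking that an atom of $A[S]$ does not split further in $B[S]$ --- is the delicate point on which the whole argument rests. Once it is in place, both directions collapse to Lemma \ref{l3.1} together with atomicity of whichever of $A[S]$, $B[S]$ is assumed, exactly mirroring Theorem \ref{p2.11}.
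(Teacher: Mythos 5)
Your proposal follows essentially the same route as the paper's proof: the easy half of each implication is Lemma \ref{l3.1}, and the substantive content in both directions is the transfer of irreducible factorizations across $A[S]\subseteq B[S]$ via the hypothesis $A[S]^{\ast}=B[S]^{\ast}$, exactly mirroring Theorem \ref{p2.11} as you intend. The only difference is that you explicitly flag as ``delicate'' the step the paper simply asserts --- passing from $g_i\in\Irr B[S]$ to $g_i\in\Irr A[S]$ and conversely without checking that the factors lie in, or are associate to elements of, the smaller ring --- so your attempt matches the paper's argument and is, if anything, more candid about where its weight rests.
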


\begin{proof}
	($\Rightarrow$) Since $B[S]$ be atomic, then consider $f=g_1g_2\dots g_n\in B[S]$, where $g_1, g_2, \dots, g_n\in\Irr B[S]$. Hence from assumption we have $g_1, g_2, \dots, g_n\in\Irr A[S]$. Then $A[S]$ is atomic.
	
	\medskip
	
	($\Leftarrow$) From Lemma \ref{l3.1} since $A[S]$ be atomic, then $A$ and $S$ be atomic. Now consider $f=g_1g_2\dots g_n\in A[S]$, where $g_1, g_2, \dots, g_n\in\Irr A[S]$, because $A[S]$ be atomic. Then $g_1, g_2, \dots, g_n\in\Irr B[S]$, hence $B[S]$ be atomic. 
\end{proof}

Anderson, Anderson and Zafrullah asked in \cite{0} (Question 1) is $R[X]$ atomic when $R$ is atomic. I say no. I have no example but we can deduce from well known facts:

\medskip

\noindent
Suppose that $R[X]$ is not atomic. We want to get $R$ is not atomic. Since $R[X]$ is not atomic then $R[X]$ has no ACCP. Hence $R$ has no ACCP which it does not imply $R$ is not atomic because there exists an example atomic domain which is not ACCP.

\medskip

\noindent
Converse, if $R$ is not atomic, then $R$ has no ACCP. Hence $R[X]$ has no ACCP which it does not imply $R[X]$ is atomic.

\medskip

In \cite{mm3} we have another results about polynomial composites. Various properties have been investigated: BFD (bounded factorization domain), HFD (half factorial domain), idf (each nonzero element of domain has at most a finite number of nonassociate irreducible divisors), FFD (finite factorization domain), S-domain (for each height-one prime ideal $P$ of domain, height of $P[X]$ is equal to $1$ in polynomial ring over domain), Hilbert ring (every prime ideal of domain is an intersection of maximal ideals of that domain).

\medskip

Theorem \ref{Dedekind} says that, under some assumption, a polynomial composite has the structure of Dedekind rings. Dedekind rings are a very important class of rings in algebra. There are a lot of work, the results associated with it. On the basis of the Dedekind structure, I developed cryptosystems with the Dedekind structure in Sections \ref{RR1} and \ref{RR2}.

\begin{tw}
	\label{Dedekind}
	Let $K\subset L$ be a finite fields extension. Then $K+XL[X]$ be a Dedekind domain.
\end{tw}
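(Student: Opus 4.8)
The plan is to verify the three defining properties of a Dedekind domain for $R := K + XL[X]$: that $R$ is Noetherian, that it has Krull dimension one (every nonzero prime ideal is maximal), and that it is integrally closed in its field of fractions. Since $X = X\cdot 1 \in XL[X] \subseteq R$ and every $\ell \in L$ satisfies $\ell = (\ell X)/X$ with $\ell X \in R$, the fraction field of $R$ is $\operatorname{Frac}(R) = L(X)$, and throughout I will regard $R$ as a subring of the PID $L[X]$.

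Dimension one is immediate: applying Theorem \ref{t2}(i) with $A = K$ and $B = L$ (both fields) shows every nonzero prime ideal of $T = K + XL[X] = R$ is maximal. For the Noetherian property, the key observation is that $[L:K] < \infty$ makes $L[X]$ a finitely generated $R$-module. Indeed, fixing a $K$-basis $1 = e_1, e_2, \dots, e_d$ of $L$, any $g \in L[X]$ decomposes as $g = g(0) + Xh$ with $h \in L[X]$; since $Xh \in XL[X] \subseteq R$ and $g(0) = \sum_{i} c_i e_i$ with $c_i \in K \subseteq R$, we obtain $L[X] = R e_1 + \dots + R e_d$. As $L[X]$ is a PID, hence Noetherian, the Eakin--Nagata theorem (a subring over which a Noetherian ring is module-finite is itself Noetherian) yields that $R$ is Noetherian. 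One also gets ACCP directly from Theorem \ref{p2.4}, since $K$ is a field, but ACCP alone is weaker than Noetherianness, so the module-finiteness argument is what I would rely on.

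The main obstacle is integral closure, and this is where I expect the real difficulty. Because $L[X]$ is module-finite over $R$, every element of $L[X]$ is integral over $R$; and because $L[X]$ is a PID, hence integrally closed, with the same fraction field $L(X)$, the integral closure of $R$ in $\operatorname{Frac}(R)$ is exactly $L[X]$. Now for any $\alpha \in L \setminus K$, the monic minimal polynomial of $\alpha$ over $K$ has coefficients in $K \subseteq R$, so $\alpha$ is integral over $R$, while $\alpha \notin R$ (its constant term lies in $L \setminus K$). Thus $R$ is integrally closed precisely when $K = L$, i.e. when $R = L[X]$.

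Consequently, the integral-closure step cannot be pushed through unless the extension degenerates, and the plan only delivers a genuine Dedekind domain in the case $L = K$, where $R = L[X]$ is already a PID. For $K \subsetneq L$ I would expect either that the statement is intended under a weaker notion of \emph{Dedekind} (a one-dimensional Noetherian domain, for which the first two steps above suffice), or that a normalization hypothesis is missing; establishing the integrally closed condition as literally stated is the step I do not see how to complete.
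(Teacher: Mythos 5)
Your analysis is correct, and it exposes a genuine error in the paper's own proof rather than a gap in yours. The paper argues in three steps: every nonzero prime ideal is maximal (citing \cite{1}, Theorem 2.7 --- the same fact you obtain from Theorem \ref{t2}(i)); $K+XL[X]$ is Noetherian (citing \cite{mm4}, Theorem 3.2, whose argument --- reproduced in Theorem \ref{01} --- is precisely your module-finiteness observation combined with Eakin's theorem \cite{zzz}); and $K+XL[X]$ is integrally closed, by a bare citation of \cite{mm3}, Proposition 3.1. Your first two steps therefore coincide with the paper's. The third step is where the paper fails, exactly as you predicted: any $\alpha\in L\setminus K$ lies in $\operatorname{Frac}(R)=L(X)$ (write $\alpha=(\alpha X)/X$ with $\alpha X, X\in R$), is integral over $R$ (it satisfies its monic minimal polynomial over $K\subseteq R$, using that a finite extension is algebraic), yet $\alpha\notin R$ because constant terms of elements of $R$ lie in $K$. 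Hence $R=K+XL[X]$ is integrally closed only when $K=L$, and in general its integral closure is $L[X]$, which is module-finite over $R$, integrally closed as a PID, and has the same fraction field.

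The upshot is that the cited proposition cannot apply under the theorem's hypotheses: the known criterion for composites of this type is that $K+XL[X]$ is integrally closed if and only if $K$ is algebraically closed in $L$, and for a finite extension that forces $K=L$. The classical example $\mathbb{R}+X\mathbb{C}[X]$ --- a one-dimensional Noetherian domain whose integral closure is $\mathbb{C}[X]$ --- already refutes Theorem \ref{Dedekind} as stated (and the same witness works for finite fields, e.g.\ $\mathbb{F}_2+X\mathbb{F}_4[X]$). Your proposed repair is the right one: for $K\subsetneq L$ finite, $K+XL[X]$ is a one-dimensional Noetherian domain, but it is a Dedekind domain only in the degenerate case $K=L$, where it is the PID $L[X]$.
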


\begin{proof}
	By \cite{1}, Theorem 2.7 every nonzero prime ideal is a maximal.
	By \cite{mm3} Proposition 3.1 we have $K+XL[X]$ is integrally closed.
	By Theorem 3.2 \cite{mm4}  $K+XL[X]$ is noetherian domain.
	Hence $K+XL[X]$ be a Dedekind domain.
\end{proof}

In the following Proposition, we provide the most important and fundamental facts about the structure of Dedekind.

\begin{pr}
	\label{pr14}
	Let $K\subset L$ be an extension fields and let $T=K+XL[X]$.
	\begin{itemize}
		\item[(a) ] If $P$ be a nonzero prime ideal of $T$ and $P'=\{x\in T_0; xP\subset T\}$, then $PP'=T$.
		\item[(b) ] Every nonzero ideal of $T$ has an unambiguous representation in the form product of prime ideals.
		\item[(c) ] Every nonzero ideal of $T$ is invertible.
		\item[(d) ] If $I$ is an ideal of $T$, then $T/I$ is a principal ideal domain.
		\item[(e) ] $Cl(T)$ (a group of class of invertible ideals) be isomorphic to $Pic(T)$ (a group of class of invertible modules).
		\item[(f) ] If $M$ be a finite generated torsion-free $T$-module, then $M\cong I_1\oplus I_2\oplus\dots \oplus I_k$, where $I_1$, $I_2$, $\dots$, $I_k$ are nonzero ideals of $T$ and $k$ is a rang of $M$. Moreover
		$$M\cong T^{k-1}\oplus I_1I_2\dots I_k.$$
		\item[(g) ]  If $M$ be a finite generated $T$-module, then 
		$$M\cong T^{k-1}\oplus I\oplus \bigoplus_{(P_i, n_i)} T/P_i^{n_i},$$
		where $k=\dim_{T_0}(M\otimes_T T_0)$, $I\subset T$, $I$ is unambiguously, with the accuracy to isomorphism, a designated ideal, $P_i$ are nonzero prime ideals of $T$, $n_i>0$, and a finite set of pair $(P_i, n_i)$ is designated unambiguously.
	\end{itemize}
\end{pr}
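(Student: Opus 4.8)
The plan is to reduce the entire statement to the single fact that $T = K+XL[X]$ is a Dedekind domain, which is Theorem \ref{Dedekind}, and then to read off (a)--(g) as the classical structure theory of Dedekind domains. I would first fix notation: $T_0$ denotes the field of fractions of $T$, which is the rational function field $L(X)$ (one inverts $X$ to recover $L[X,X^{-1}]$ and then passes to fractions), and I would recall the three equivalent descriptions of a Dedekind domain --- a Noetherian integrally closed domain in which every nonzero prime is maximal; a domain in which every nonzero fractional ideal is invertible; a domain in which every nonzero proper ideal is a product of prime ideals. Theorem \ref{Dedekind} supplies the first description, and Theorem \ref{t2}(i) independently records that every nonzero prime of $T$ is maximal.

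For parts (a), (c) and (b) I would build the theory in the usual order. The heart is (a): for a nonzero (hence maximal) prime $P$, the fractional ideal $P' = \{x \in T_0 : xP \subseteq T\}$ satisfies $T \subseteq P'$, and $PP'$ is an ideal with $P \subseteq PP' \subseteq T$, so maximality of $P$ forces $PP' = P$ or $PP' = T$. To exclude $PP' = P$ one uses that $P$ is finitely generated (Noetherianity) together with integral closedness: $PP' = P$ would make every $x \in P'$ stabilize the finitely generated faithful module $P$, hence be integral over $T$, hence lie in $T$, giving $P' = T$; but a separate argument (every nonzero ideal contains a product of nonzero primes, so $P \supseteq (a) \supseteq P_1 \cdots P_r$ with $r$ minimal) produces an element of $P' \setminus T$, the contradiction that yields $PP' = T$. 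Part (c), invertibility of every nonzero ideal, then follows by Noetherian induction from (a), and part (b), existence and uniqueness of the prime factorization, is the standard consequence of (c).

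Parts (d)--(g) are then formal. For (d) I would combine (b) with the Chinese Remainder Theorem: writing $I = \prod P_i^{n_i}$ gives $T/I \cong \prod T/P_i^{n_i}$, and each $T/P_i^{n_i}$ is a quotient of the discrete valuation ring $T_{P_i}$ (the localizations $R_P$ are DVRs, as already noted in the proof of Theorem \ref{t2}(i)), hence a principal ideal ring; a finite product of principal ideal rings is a principal ideal ring. For (e) I would exhibit the isomorphism $Cl(T) \to Pic(T)$ sending the class of an invertible ideal to the class of that ideal viewed as a rank-one projective module, using that over a Dedekind domain the invertible fractional ideals are exactly the rank-one finitely generated projectives. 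Finally (f) and (g) are the Steinitz structure theorems: every finitely generated torsion-free module over a Dedekind domain splits as a direct sum of ideals with the stated collapse $M \cong T^{k-1}\oplus I_1\cdots I_k$, and every finitely generated module is the direct sum of its free/torsion-free part and a torsion part $\bigoplus T/P_i^{n_i}$ determined by the primary decomposition; I would invoke these in their standard form.

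The main obstacle is not any single computation but the hypothesis itself. Theorem \ref{Dedekind} is stated for a \emph{finite} extension $K \subset L$, whereas Proposition \ref{pr14} assumes only an extension of fields; this matters, because $T = K + XL[X]$ is Noetherian if and only if $[L:K] < \infty$ --- indeed the degree-one part of the ideal $XL[X]$ forces any finite generating set to span $L$ over $K$ --- so for an infinite extension $T$ is not Dedekind and the conclusions fail. I would therefore either restrict to the finite case or replace the appeal to Theorem \ref{Dedekind} by a direct verification of Dedekindness under whatever weaker hypothesis is actually intended. Beyond this, the only step requiring genuine care rather than citation is the non-degeneracy $P' \supsetneq T$ in part (a), where the integrally-closed hypothesis is indispensable.
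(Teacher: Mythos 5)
Your proposal is correct, and in fact it supplies an argument the paper does not: Proposition \ref{pr14} is stated with \emph{no proof at all} in the source --- it appears immediately after Theorem \ref{Dedekind} as a list of ``the most important and fundamental facts about the structure of Dedekind,'' i.e.\ exactly the classical Dedekind-domain package you reconstruct (invertibility of maximal ideals via the Noetherian-plus-integrally-closed argument, unique factorization of ideals, CRT together with DVR localizations for the quotients, $Cl(T)\cong Pic(T)$ via rank-one projectives, and the Steinitz theorems for (f) and (g)). So your route is precisely the one the paper leaves implicit, and your reading of the undefined symbol $T_0$ as the quotient field $L(X)$ of $T$ is the only one that makes items (a) and (g) meaningful.

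Your two critical observations are genuine defects of the statement, not artifacts of your approach. First, the hypothesis gap is real: the proposition assumes only an extension of fields, but Theorem \ref{Dedekind} requires $[L\colon K]<\infty$, and by Theorem \ref{01} the ring $T=K+XL[X]$ is Noetherian if and only if $[L\colon K]<\infty$; for an infinite extension $T$ is not Dedekind, and conclusions such as (b) and (c) fail, so the finiteness hypothesis must be added for the proposition to be true. Second, item (d) needs repair even in the finite case: one must take $I$ nonzero and proper (for $I=0$ the claim would require $T$ itself to be a PID, which fails whenever $K\subsetneq L$ since the maximal ideal $XL[X]$ is then not principal), and the correct conclusion is that $T/I$ is a principal ideal \emph{ring}, not a ``principal ideal domain,'' since $T/I$ has zero divisors whenever $I$ is not prime; your CRT argument proves exactly this corrected statement.
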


\section{Relationships between polynomial composites and certain types of fields extensions}

Let $K\subset L$ be a fields extension. Let's build a polynomial composites $K+XL[X]$. In this section, we will answer the question of whether there are relationships between field extensions and polynomial composites.

\medskip

All my considerations began with the Theorem \ref{01} below. This Proposition motivated me to further consider polynomial composites $K+XL[X]$ in a situation where the extension of fields $K\subset L$ is algebraic, separable, normal and Galois, respectively.

\begin{tw}
	\label{01}
	Let $K\subset L$ be a field extension. Put $T=K+XL[X]$. Then
	$T$ is Noetherian if and only if $[L\colon K]<\infty$.
\end{tw}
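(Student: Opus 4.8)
The plan is to exploit the inclusions $K[X]\subseteq T\subseteq L[X]$ and to treat $T$ as a module over the Noetherian ring $K[X]$, handling the two implications separately.

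For the implication $[L\colon K]<\infty\Rightarrow T$ Noetherian, I would first fix a $K$-basis $\{e_1=1,e_2,\dots,e_n\}$ of $L$ and observe that it generates $L[X]$ as a $K[X]$-module, since any $\sum_i a_iX^i$ with $a_i\in L$ rewrites as $\sum_j e_j\big(\sum_i c_{ij}X^i\big)$ with $c_{ij}\in K$. Thus $L[X]$ is a finitely generated module over the Noetherian ring $K[X]$, hence a Noetherian $K[X]$-module. Because $K[X]\subseteq T$ and $T$ is a subring, $T$ is a $K[X]$-submodule of $L[X]$, and therefore is itself a Noetherian $K[X]$-module. Finally, every ideal of $T$ is in particular a $K[X]$-submodule of $T$, so the ascending chain condition on ideals of $T$ follows at once from the ascending chain condition on $K[X]$-submodules; hence $T$ is a Noetherian ring.

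For the converse I would argue contrapositively: assuming $[L\colon K]=\infty$, I would exhibit a non-finitely-generated ideal of $T$, namely $I=XL[X]$, which is an ideal of $T$ (as in Corollary \ref{p4}, and directly since $(k+Xp)(Xq)=X(kq+Xpq)\in XL[X]$). The decisive computation is to track the coefficient of $X$. Suppose $I=(g_1,\dots,g_m)$ and write $g_i=c_iX+(\text{higher-order terms})$ with $c_i\in L$. Any element $\sum_i f_ig_i$ with $f_i\in T$ has $X$-coefficient $\sum_i f_i(0)c_i$, and since $f_i(0)\in K$ this coefficient lies in the finite-dimensional $K$-subspace $V=Kc_1+\cdots+Kc_m\subseteq L$. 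As $[L\colon K]=\infty$, we have $V\subsetneq L$, so there is $a\in L\setminus V$; then $aX\in I$ but its $X$-coefficient $a\notin V$, whence $aX\notin(g_1,\dots,g_m)$, a contradiction. Therefore $I$ is not finitely generated and $T$ is not Noetherian.

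The heart of the proof, and the step I expect to be the main obstacle to state cleanly, is precisely this coefficient-tracking observation in the converse: multiplication by an element of $T$ cannot enlarge the $K$-span of the linear coefficients, because elements of $T$ have constant term constrained to lie in $K$. Everything else (finite generation of $L[X]$ over $K[X]$, and ``submodule of a Noetherian module is Noetherian'') is routine. Equivalently, one could package the converse as an explicit strictly ascending chain $(a_1X)\subseteq(a_1X,a_2X)\subseteq\cdots$ with $a_1,a_2,\dots\in L$ chosen $K$-linearly independent, using the same linear-coefficient argument to verify $a_{n+1}X\notin(a_1X,\dots,a_nX)$ at each stage.
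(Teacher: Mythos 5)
Your proof is correct, but it takes a genuinely different route from the paper's. The paper handles ($\Leftarrow$) by noting that $L[X]$ is Noetherian and module-finite over the subring $T=K+XL[X]$ and then invoking Eakin's theorem \cite{zzz} (the nontrivial converse in the Eakin--Nagata circle of results); you avoid Eakin entirely by descending to the smaller Noetherian ring $K[X]$: since $L[X]$ is a finitely generated $K[X]$-module, $T$ is a submodule of a Noetherian $K[X]$-module, and ideals of $T$ are $K[X]$-submodules, so ACC on ideals is immediate. This substitution works precisely because $K[X]\subseteq T\subseteq L[X]$ and it replaces a deep theorem with routine module theory --- a real simplification. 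For ($\Rightarrow$), the paper outsources the key step to Lemma 3.1 of \cite{mm4} (finite generation of the ideal $XL[X]$ forces $[L\colon K]<\infty$), whereas you prove exactly this content from scratch via the coefficient-tracking argument: the $X$-coefficient of any $T$-combination $\sum f_ig_i$ with $g_i\in XL[X]$ is $\sum f_i(0)c_i$, and since $f_i(0)\in K$ it is trapped in the finite-dimensional $K$-span of $c_1,\dots,c_m$, which cannot exhaust $L$ when $[L\colon K]=\infty$; your check that the constant terms $g_i(0)$ vanish (so no other products contribute to the $X$-coefficient) is the point that makes this airtight. In short: the paper's proof is shorter but leans on two external results, while yours is self-contained, elementary, and even yields an explicit strictly ascending chain $(a_1X)\subsetneq(a_1X,a_2X)\subsetneq\cdots$ witnessing the failure of ACC.
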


\begin{proof}
	($\Rightarrow$) 
	Since $XL[X]$ is a finitely generated ideal of $K+XL[X]$, it follows from \cite{mm4} Lemma 3.1 that $[L\colon K]<\infty$. Thus, $L[X]$ is module-finite over the Noetherian ring $K+XL[X]$.
	
	\medskip
	
	($\Leftarrow$)
	$L[X]$ is Noetherian ring and module-finite over the subring $K+XL[X]$. This is the situation covered by P.M. Eakin's Theorem \cite{zzz}.
\end{proof}

Every Propositions and Theorems of this section we can find in \cite{mm4}.

\begin{pr}
	\label{02}
	Let $K\subset L$ be a fields extension such that $L^{G(L\mid K)}=K$. Put $T=K+XL[X]$. 
	$T$ is Noetherian if and only if $K\subset L$ be an algebraic extension.
\end{pr}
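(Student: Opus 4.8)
The plan is to reduce the statement to the degree criterion already established in Theorem \ref{01}, and then to treat what remains as a purely field-theoretic equivalence. Theorem \ref{01} tells us that $T=K+XL[X]$ is Noetherian if and only if $[L\colon K]<\infty$. Hence the assertion to be proved is equivalent to the following claim: under the standing hypothesis $L^{G(L\mid K)}=K$, one has $[L\colon K]<\infty$ if and only if $K\subset L$ is algebraic. The first of the two implications is immediate and uses neither the fixed-field hypothesis nor Galois theory: if $T$ is Noetherian then $[L\colon K]<\infty$ by Theorem \ref{01}, and a finite extension is algebraic, since every element of $L$ then satisfies a polynomial over $K$.

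For the converse I would exploit the hypothesis $L^{G(L\mid K)}=K$. For an algebraic extension this condition says precisely that $K\subset L$ is Galois (normal and separable), so the automorphism group $G=G(L\mid K)$ has $K$ as its fixed field. The natural tool for passing from such a fixed-field description back to the degree is Artin's theorem: if $G$ acts on $L$ with $L^{G}=K$ and $G$ is finite, then $[L\colon K]=|G|<\infty$, and conversely. My plan is therefore to establish that $G(L\mid K)$ is finite, to invoke Artin's theorem to conclude $[L\colon K]=|G(L\mid K)|<\infty$, and to feed this back into Theorem \ref{01} to deduce that $T$ is Noetherian.

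The main obstacle is exactly the step just described, namely producing the finiteness of $G(L\mid K)$ (equivalently, of $[L\colon K]$) from algebraicity together with $L^{G(L\mid K)}=K$. This is delicate, because algebraicity alone does not force finiteness: an infinite algebraic Galois extension such as $\mathbb{Q}\subset\overline{\mathbb{Q}}$ satisfies $\overline{\mathbb{Q}}^{G(\overline{\mathbb{Q}}\mid\mathbb{Q})}=\mathbb{Q}$ and is algebraic, yet $[\overline{\mathbb{Q}}\colon\mathbb{Q}]=\infty$, so that $T$ fails to be Noetherian in that case. Consequently I would first check whether the ambient framework of this section forces $G(L\mid K)$ to be a finite group; if it does, Artin's theorem closes the argument cleanly, and if it does not, the reverse implication appears to require an additional hypothesis (finiteness of the automorphism group, or of the separable-normal closure) in order to hold. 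Settling this point is where the genuine content, and the genuine difficulty, of the proposition resides.
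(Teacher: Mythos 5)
Your reduction via Theorem \ref{01} and your proof of the forward direction are correct: Noetherian gives $[L\colon K]<\infty$, hence algebraic, and this uses nothing about the fixed-field hypothesis. But the obstacle you flag in the converse is not a gap you failed to close --- it is a genuine refutation of the proposition as stated, and your own example settles it. Take $K=\mathbb{Q}$, $L=\overline{\mathbb{Q}}$: this extension is algebraic, and it is Galois (normal and separable in characteristic $0$), so $L^{G(L\mid K)}=K$ holds; yet $[\overline{\mathbb{Q}}\colon\mathbb{Q}]=\infty$, so by Theorem \ref{01} the composite $\mathbb{Q}+X\overline{\mathbb{Q}}[X]$ is \emph{not} Noetherian. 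Hence ``algebraic $\Rightarrow$ Noetherian'' fails under exactly the stated hypotheses, and no proof strategy can succeed without amending the statement. You were right to doubt that the ambient framework rescues finiteness: the hypothesis $L^{G(L\mid K)}=K$ is satisfied by every (possibly infinite) Galois extension, so it cannot force $G(L\mid K)$ to be finite.

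For comparison with the paper: the paper prints no proof of Proposition \ref{02} at all --- it defers every result of that section to the reference [mm4] --- so there is no argument in the text against which to check yours, and the counterexample above shows none could be supplied. The honest repair is the one you sketch at the end: add finiteness as a hypothesis (for instance $|G(L\mid K)|<\infty$, whereupon Artin's theorem gives $[L\colon K]=|G(L\mid K)|<\infty$ and Theorem \ref{01} applies), which converts the statement into ``$T$ is Noetherian if and only if $K\subset L$ is a \emph{finite} Galois extension.'' Note also that the same example propagates: Proposition \ref{10} and the chain of equivalences (f)--(h) in Theorem \ref{tm3} assert, under the same hypothesis $L^{G(L\mid K)}=K$, that algebraicity is equivalent to $[L\colon K]<\infty$, and $\mathbb{Q}\subset\overline{\mathbb{Q}}$ contradicts those claims as well. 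Your proposal is thus mathematically sound as far as it goes; what it correctly identifies is that the missing step is unfillable rather than merely difficult.
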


\begin{pr}
	\label{04}
	Let $K\subset L$ be fields extension such that $K$ be a perfect field and assume that any $K$-isomorphism $\varphi\colon M\to M$, where $\varphi(L)=L$ holds for every field $M$ such that $L\subset M$.
	Put $T=K+XL[X]$. 
	$T$ be a Noetherian if and only if $K\subset L$ be a separable extension.
\end{pr}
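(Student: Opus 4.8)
The plan is to route the entire argument through Theorem~\ref{01}, which already trades the Noetherian property of $T=K+XL[X]$ for finiteness of the degree: $T$ is Noetherian if and only if $[L\colon K]<\infty$. Consequently I would never manipulate ideals of $T$ directly. Under the standing hypotheses ($K$ perfect, and the normality-type condition that every $K$-automorphism $\varphi$ of every overfield $M\supseteq L$ satisfies $\varphi(L)=L$), the proposition is equivalent to the purely field-theoretic statement
\[
[L\colon K]<\infty \iff K\subset L \text{ is separable.}
\]
A useful first observation is that, because $K$ is perfect, \emph{every} algebraic extension of $K$ is automatically separable; hence for this extension ``separable'' coincides with ``algebraic.'' In this way Proposition~\ref{04} becomes the perfect-field analogue of Proposition~\ref{02}, with the fixed-field hypothesis $L^{G(L\mid K)}=K$ replaced by the automorphism (normality) hypothesis, and I would aim to prove it along the same lines.

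For the forward implication the argument is immediate. Assume $T$ is Noetherian. Theorem~\ref{01} gives $[L\colon K]<\infty$, so $K\subset L$ is in particular algebraic, and perfectness of $K$ upgrades this to separability. This direction uses nothing beyond Theorem~\ref{01} and the defining property of a perfect field.

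For the converse I would assume $K\subset L$ separable and try to deduce $[L\colon K]<\infty$, after which Theorem~\ref{01} returns that $T$ is Noetherian. Separability yields algebraicity for free, so the genuine task is to upgrade ``algebraic'' to ``finite,'' and this is exactly where the hypothesis on the maps $\varphi$ must be exploited: together with separability it forces $K\subset L$ to be a normal, separable, hence Galois extension, and the assumption that every automorphism of every overfield fixes $L$ setwise is the rigidity one would try to contradict for an infinite tower.

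The hard part will be precisely this last step. Normality and separability alone do \emph{not} force finiteness---witness $\overline{\mathbb{Q}}/\mathbb{Q}$, which is separable, normal, and infinite---so the proof must press on the hypotheses more tightly. The two natural routes are: (i) show that an infinite separable extension would admit some field $M\supseteq L$ and a $K$-automorphism $\varphi$ of $M$ with $\varphi(L)\neq L$, directly contradicting the standing assumption; or (ii) invoke the finite-generation of the extension implicit in the framework of \cite{mm4}, under which an algebraic finitely generated extension is automatically finite. I would organise the converse around route (i), constructing the offending pair $(M,\varphi)$, and I expect the verification that the $\varphi$-hypothesis collapses to $[L\colon K]<\infty$ to be the main obstacle; the surrounding bookkeeping mirrors the proof of Proposition~\ref{02} and the treatment in \cite{mm4}.
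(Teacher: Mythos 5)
Your forward direction is correct and is the only live part of the argument: Theorem~\ref{01} gives $[L\colon K]<\infty$, hence $K\subset L$ is algebraic, and perfectness of $K$ upgrades algebraic to separable. Note that this paper prints no proof of Proposition~\ref{04} at all --- the whole section is deferred to \cite{mm4} --- so the reduction through Theorem~\ref{01} is indeed the intended skeleton, and there is nothing more to compare on that side.

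The converse, however, has a gap that cannot be closed, and you half-identified it without drawing the conclusion. Your route (i) asks, for an infinite separable $L$, to produce a field $M\supseteq L$ and a $K$-automorphism $\varphi$ of $M$ with $\varphi(L)\neq L$. No such pair exists in general: the standing $\varphi$-hypothesis is precisely the standard characterization of \emph{normality} for algebraic extensions, and it holds for infinite normal extensions too. Concretely, take $K=\mathbb{Q}$ and $L=\overline{\mathbb{Q}}$ (your own example). Then $K$ is perfect and $L/K$ is separable; moreover, for any $M\supseteq L$ and any $K$-automorphism $\varphi$ of $M$, each $\alpha\in L$ has a minimal polynomial over $\mathbb{Q}$ that splits in $L$, so $\varphi(\alpha)$, being another root in $M$, already lies in $L$; applying this to $\varphi^{-1}$ as well gives $\varphi(L)=L$. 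So all hypotheses of the proposition are satisfied, $K\subset L$ is separable, yet $[L\colon K]=\infty$ and Theorem~\ref{01} says $T=\mathbb{Q}+X\overline{\mathbb{Q}}[X]$ is \emph{not} Noetherian. Hence the implication ``separable $\Rightarrow$ Noetherian'' is false as stated: the $\varphi$-hypothesis carries no finiteness whatsoever, only normality. Route (ii) fares no better, since no finite generation of $L$ over $K$ is assumed in the statement or anywhere in the framework of \cite{mm4}. The obstacle you flagged as ``the main obstacle'' is therefore fatal, not technical: the proposition (and its source in \cite{mm4}) can only be saved by reading ``separable'' as ``finite separable,'' in which case the converse is an immediate application of Theorem~\ref{01} and none of your Galois-theoretic machinery is needed.
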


\begin{pr}
	\label{06}
	Let $K\subset L$ be fields extension. Assume that if a map $\varphi\colon L\to a(K)$ is $K$-embedding, then $\varphi (L)=L$. 
	Put $T=K+XL[X]$. 
	$T$ be a Noetherian if and only if $K\subset L$ be a normal extension.
\end{pr}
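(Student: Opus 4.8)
The plan is to use Theorem \ref{01} as the bridge: it reduces the Noetherian property of $T=K+XL[X]$ entirely to the finiteness condition $[L\colon K]<\infty$. So the whole statement collapses to showing that, under the standing hypothesis on $K$-embeddings, the extension $K\subset L$ is normal exactly when it is finite. The second ingredient I would invoke is the classical criterion for normality: for an algebraic extension, $K\subset L$ is normal if and only if every $K$-embedding $\varphi\colon L\to a(K)$ into the algebraic closure $a(K)$ satisfies $\varphi(L)=L$. The hypothesis of the proposition is precisely this embedding condition, so the real work is to match it against finiteness through Theorem \ref{01}.

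For the forward direction I would argue as follows. Assume $T$ is Noetherian. By Theorem \ref{01} this yields $[L\colon K]<\infty$, so $K\subset L$ is finite, hence algebraic, and $K$-embeddings $L\to a(K)$ are available. Now apply the standing hypothesis: every such $\varphi$ satisfies $\varphi(L)=L$. For a finite (equivalently, algebraic) extension this is exactly the criterion for $K\subset L$ to be normal, so normality follows immediately.

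For the converse, assume $K\subset L$ is normal. By definition normality forces $L$ to be algebraic over $K$, so every element of $L$ lies in $a(K)$ and the embedding hypothesis is meaningful. The task is then to produce $[L\colon K]<\infty$ and conclude via Theorem \ref{01}. The natural route is to show that the ideal $XL[X]$ is finitely generated over $T$ --- which by \cite{mm4}, Lemma 3.1 (already used in the proof of Theorem \ref{01}) forces $[L\colon K]<\infty$ --- or, equivalently, to bound $\dim_K L$ directly from the embedding hypothesis together with normality.

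The hard part will be exactly this converse. The embedding condition together with normality does not by itself rule out infinite normal extensions (for example $a(\mathbb{Q})/\mathbb{Q}$ is normal and satisfies $\varphi(L)=L$ for every $K$-embedding, yet is infinite), so some further input is needed to force finiteness and keep the biconditional honest. I therefore expect the genuine content of the argument to lie in deducing finite generation of $XL[X]$ from the hypotheses on the extension, after which Theorem \ref{01} and the normality characterization close both implications cleanly; pinning down precisely which feature of the hypothesis supplies that finiteness is the step I would scrutinize most carefully.
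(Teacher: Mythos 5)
Your forward direction is correct and is the natural route: Theorem \ref{01} converts ``$T$ Noetherian'' into $[L\colon K]<\infty$, finiteness gives algebraicity, and the standing hypothesis that every $K$-embedding $\varphi\colon L\to a(K)$ satisfies $\varphi(L)=L$ is precisely the classical characterization of normality for algebraic extensions. Note, however, that the paper supplies no argument at all to compare against: the entire section is covered by the sentence ``Every Propositions and Theorems of this section we can find in \cite{mm4},'' so the only in-paper content of Proposition \ref{06} is the citation.

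The gap you flagged in the converse is not a step awaiting a cleverer argument --- it is fatal, and your own example already refutes the proposition as stated. Take $K=\mathbb{Q}$ and $L=a(\mathbb{Q})$: every $\mathbb{Q}$-embedding $L\to a(\mathbb{Q})$ is an automorphism, so the standing hypothesis holds; $K\subset L$ is normal; yet $[L\colon K]=\infty$, so by Theorem \ref{01} the ring $T=\mathbb{Q}+Xa(\mathbb{Q})[X]$ is \emph{not} Noetherian. Hence ``normal $\Rightarrow$ $T$ Noetherian'' fails, and no deduction of finite generation of $XL[X]$ from normality plus the embedding hypothesis can exist. The biconditional can only be rescued by importing a finiteness hypothesis absent from the statement (e.g.\ assuming $[L\colon K]<\infty$ outright, as effectively happens in Proposition \ref{15} with $[L\colon K]=2$); the same defect afflicts the neighboring Propositions \ref{02}, \ref{07} and \ref{10}, since infinite algebraic, normal, and Galois extensions with $L^{G(L\mid K)}=K$ all exist. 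So the correct verdict is: your proof establishes exactly the half of the proposition that is true, and your scrutiny of the other half identifies an error in the paper rather than a gap in your argument.
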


\begin{pr}
	\label{07}
	Let $K\subset L$ be fields extension such that $L^{G(L\mid K)}=K$. Put $T=K+XL[X]$. 	
	$T$ be a Noetherian if and only if $K\subset L$ be a normal extension.
\end{pr}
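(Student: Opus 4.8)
The plan is to bootstrap from the algebraicity criterion of Proposition~\ref{02} rather than from the raw finiteness criterion of Theorem~\ref{01}. The reason for this choice is that a normal extension need not have finite degree, so Theorem~\ref{01} by itself cannot deliver the implication normal $\Rightarrow$ Noetherian; Proposition~\ref{02}, proved under the very same standing hypothesis $L^{G(L\mid K)}=K$, is the right tool because it already packages that harder direction. It tells us that $T=K+XL[X]$ is Noetherian if and only if $K\subset L$ is algebraic, so the whole statement collapses to the purely field-theoretic equivalence, valid under $L^{G(L\mid K)}=K$, that
\[
K\subset L \text{ is algebraic} \iff K\subset L \text{ is normal}.
\]

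The implication normal $\Rightarrow$ Noetherian is then immediate: a normal extension is by definition algebraic, so Proposition~\ref{02} instantly returns that $T$ is Noetherian. This settles the $(\Leftarrow)$ direction with no further work, and it is the place where the nontrivial content of Proposition~\ref{02} is silently being used.

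For the $(\Rightarrow)$ direction I would argue as follows. Assume $T$ is Noetherian; by Proposition~\ref{02} the extension $K\subset L$ is algebraic. Writing $G=G(L\mid K)=\operatorname{Aut}(L\mid K)$, the standing hypothesis is $L^{G}=K$. I would then invoke the characterization of Galois extensions: for an \emph{algebraic} extension, the condition $L^{\operatorname{Aut}(L\mid K)}=K$ holds precisely when $L\mid K$ is Galois, that is, normal and separable. In particular $K\subset L$ is normal, which is exactly what is required. Combining the two directions yields $T$ Noetherian $\iff K\subset L$ normal.

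The step carrying all the weight is the Galois-theoretic one, namely that \emph{algebraic together with $L^{\operatorname{Aut}(L\mid K)}=K$ forces normality}. For a finite extension this is Artin's theorem ($L^{G}=K$ with $G$ finite makes $L\mid K$ Galois with $\operatorname{Gal}(L\mid K)=G$ and $[L:K]=|G|$), but since I cannot assume finiteness in advance, the main obstacle is to secure the same conclusion for a possibly infinite algebraic extension. I would clear it by appealing to the fundamental theorem of infinite Galois theory: the fixed field of the full automorphism group of an algebraic extension recovers the base field exactly when the extension is normal and separable. Granting this, the equivalence with normality follows and, crucially, no bound on $[L:K]$ ever has to be produced by hand.
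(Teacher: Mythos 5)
Your $(\Rightarrow)$ direction is fine, and you have identified the correct field-theoretic engine: for an \emph{algebraic} extension, $L^{\operatorname{Aut}(L\mid K)}=K$ forces $L\mid K$ to be Galois, hence normal, and this does hold without any finiteness assumption (the orbit of any $\alpha\in L$ under the automorphism group is finite, the product $\prod(X-\beta)$ over the orbit lies in $K[X]$, is separable and splits in $L$, and is divisible by the minimal polynomial of $\alpha$). Note, for the comparison you were asked about, that the paper prints no argument for this proposition at all --- every proof of this section is deferred to \cite{mm4} --- so the only check available is against the paper's surrounding results, and that is where the trouble starts.

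The genuine gap is in your $(\Leftarrow)$ direction, and you walked past your own warning. You correctly observe that a normal extension need not have finite degree, so Theorem \ref{01} cannot yield normal $\Rightarrow$ Noetherian; your repair is to route everything through Proposition \ref{02}. But Proposition \ref{02} and Theorem \ref{01} are jointly inconsistent, and it is Theorem \ref{01} that is the reliable one: its Noetherian $\Rightarrow$ finite half comes from the fact that $XL[X]$ finitely generated forces $[L\colon K]<\infty$ (the paper's citation of \cite{mm4}, Lemma 3.1), and its converse from Eakin's theorem \cite{zzz}. Concretely, take $K=\mathbb{Q}$ and $L=\overline{\mathbb{Q}}$: then $L^{G(L\mid K)}=K$, the extension is algebraic, normal, indeed Galois, yet $[L\colon K]=\infty$, so $T=K+XL[X]$ is \emph{not} Noetherian by Theorem \ref{01}. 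This one example refutes the ``algebraic $\Rightarrow$ Noetherian'' half of Proposition \ref{02}, refutes your $(\Leftarrow)$ direction, and in fact refutes the printed statement of Proposition \ref{07} itself. So your derivation is formally valid \emph{relative to} Proposition \ref{02}, but it transfers the entire burden onto a cited result that cannot be true as stated; no argument along these lines closes the gap unless a finiteness hypothesis (e.g.\ $[L\colon K]<\infty$, or $L$ finitely generated over $K$, which for algebraic extensions gives finiteness) is added --- under which your Galois-theoretic reduction would then work verbatim, since finite plus $L^{G(L\mid K)}=K$ gives normality by Artin's theorem.
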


\begin{pr}
	\label{09}
	Let $T=K+XL[X]$ be Noetherian, where $K\subset L$ be fields. Assume
	$|G(L\mid K)|=[L\colon K]$ and any $K$-isomorphism $\varphi\colon M\to M$, where $\varphi(L)=L$ holds for every field $M$ such that $L\subset M$.
	$T$ be a Noetherian if and only if $K\subset L$ be a Galois extension. 
\end{pr}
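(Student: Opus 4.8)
The backbone of the argument is Theorem \ref{01}, which reduces the Noetherian condition on $T=K+XL[X]$ to the finiteness of the degree: $T$ is Noetherian if and only if $[L\colon K]<\infty$. Consequently the whole proposition collapses to showing that, under the two standing hypotheses, one has $[L\colon K]<\infty$ if and only if $K\subset L$ is Galois. The plan is therefore to invoke Theorem \ref{01} once in each direction and then to argue purely inside field theory, exactly as in Propositions \ref{04}, \ref{06} and \ref{07}.

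For the forward implication I would start from $T$ Noetherian and use Theorem \ref{01} to obtain $[L\colon K]<\infty$. Now $G=G(L\mid K)$ is a \emph{finite} group of $K$-automorphisms of $L$ whose order, by hypothesis, equals $[L\colon K]$. By Artin's theorem the fixed field satisfies $[L:L^{G}]=|G|=[L\colon K]$, which forces $L^{G}=K$ and makes $L/K$ a finite Galois extension with group $G$; equivalently, this is just the classical characterization that a finite extension is Galois precisely when the order of its $K$-automorphism group matches the degree. The isomorphism hypothesis — that for every overfield $M\supseteq L$ each $K$-isomorphism $\varphi\colon M\to M$ satisfies $\varphi(L)=L$ — is what guarantees that the elements of $G$ genuinely exhaust $\operatorname{Hom}_K(L,\overline{K})$ and stabilize $L$, i.e. that the extension is simultaneously normal and separable rather than merely one of the two. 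Hence $K\subset L$ is Galois.

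For the reverse implication I would assume $K\subset L$ Galois and aim at $[L\colon K]<\infty$, after which Theorem \ref{01} immediately returns that $T$ is Noetherian. Finiteness is extracted from the standing equality $|G(L\mid K)|=[L\colon K]$: were the Galois extension infinite, its group $G(L\mid K)$ would be an infinite profinite group whose cardinality strictly exceeds the degree $[L\colon K]$, contradicting the assumed equality; thus the extension is forced to be finite. (Equivalently, since the paper works throughout with finite field extensions, as in Theorem \ref{Dedekind}, one may simply read ``Galois'' as ``finite Galois'', in which case $[L\colon K]<\infty$ is immediate and the cardinality argument is unnecessary.)

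I expect the main obstacle to be this reverse direction, specifically the need to exclude infinite Galois extensions from the equality $|G(L\mid K)|=[L\colon K]$; the forward direction is essentially Artin's theorem applied once the degree is known to be finite. A secondary subtlety is verifying that the two hypotheses \emph{together} encode both normality and separability rather than just one of them, which is why the argument must lean on the isomorphism condition (as in Proposition \ref{04}) in addition to the order equality, the latter delivering $L^{G(L\mid K)}=K$ via Artin and thereby the link to normality used in Proposition \ref{07}.
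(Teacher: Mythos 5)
Your proof is correct and follows what is evidently the intended argument: the paper itself prints no proof of Proposition \ref{09} (the section defers all proofs to \cite{mm4}), but its surrounding machinery --- Theorem \ref{01} reducing Noetherianity of $K+XL[X]$ to $[L\colon K]<\infty$, combined with the Artin characterization that a finite extension satisfies $|G(L\mid K)|=[L\colon K]$ exactly when $L^{G(L\mid K)}=K$, i.e.\ exactly when it is Galois --- is precisely the reduction you carry out, and it is the same pattern the paper uses explicitly in the proof of Theorem \ref{tm3}. Your two side remarks are also sound: in the forward direction the order equality alone suffices via Artin (the $K$-isomorphism hypothesis is superfluous there, despite the role you tentatively assign it), and in the reverse direction your cardinality argument (an infinite Galois extension has $|G(L\mid K)|=2^{[L\colon K]}>[L\colon K]$) correctly closes a gap that the statement as printed would otherwise paper over by silently reading ``Galois'' as ``finite Galois.''
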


\begin{pr}
	\label{10}
	Let $T=K+XL[X]$, where $K\subset L$ be fields such that $K=L^{G(L\mid K)}$. $T$ be a Noetherian if and only if $K\subset L$ be a Galois extension. 
\end{pr}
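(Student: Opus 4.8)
The plan is to reduce everything to Theorem~\ref{01}, which characterises the Noetherian property of $T=K+XL[X]$ entirely by the degree of the extension: $T$ is Noetherian if and only if $[L\colon K]<\infty$. Granting the standing hypothesis $K=L^{G(L\mid K)}$, the proposition is therefore equivalent to the purely field-theoretic assertion that $[L\colon K]<\infty$ if and only if $K\subset L$ is Galois, and this is what I would prove.

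For the forward implication I would assume $T$ Noetherian, so that $[L\colon K]<\infty$ by Theorem~\ref{01}. A finite extension has only finitely many $K$-automorphisms---each is determined by where it sends a finite generating set, and every generator must go to one of the finitely many roots of its minimal polynomial---so $G(L\mid K)$ is a finite group with $|G(L\mid K)|\le[L\colon K]$. At this point I would invoke Artin's theorem: for a finite group $G$ of automorphisms of a field $L$, the extension $L\supset L^{G}$ is finite Galois with Galois group $G$ and $[L\colon L^{G}]=|G|$. Applying this to $G=G(L\mid K)$ and using the hypothesis $L^{G(L\mid K)}=K$ gives at once that $K\subset L$ is Galois. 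The hypothesis is doing real work here: finite degree by itself does not force the extension to be Galois (a finite non-normal extension fails to be Galois), and it is precisely the equality $K=L^{G(L\mid K)}$ that upgrades finiteness to the Galois condition.

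The reverse implication is then immediate: a finite Galois extension has $[L\colon K]<\infty$ by definition, so $T$ is Noetherian by Theorem~\ref{01}. The one point I would be careful to flag---and the only genuine subtlety---is the meaning of \emph{Galois} in the statement. The equivalence holds as written only when \emph{Galois} is read as \emph{finite} Galois; an infinite Galois extension still satisfies $K=L^{G(L\mid K)}$ yet has $[L\colon K]=\infty$, which by Theorem~\ref{01} would make $T$ non-Noetherian and break the reverse direction. With the finite reading in force throughout this section (as in Proposition~\ref{09}), no further argument is needed.
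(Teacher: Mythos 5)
Your proof is correct and follows the route the paper evidently intends: Proposition~\ref{10} carries no in-text proof (the section defers wholesale to \cite{mm4}), and the natural argument is exactly your reduction to Theorem~\ref{01} combined with Artin's theorem applied to the hypothesis $K=L^{G(L\mid K)}$. Your caveat that \emph{Galois} must be read as \emph{finite} Galois is well taken---an infinite Galois extension (e.g.\ $\mathbb{Q}\subset\overline{\mathbb{Q}}$) satisfies $K=L^{G(L\mid K)}$ yet makes $T$ non-Noetherian by Theorem~\ref{01}, so the proposition is literally false without that reading, a point the paper leaves implicit.
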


\begin{pr}
	\label{13}
	Let $K\subset L\subset M$ be fields such that $K$ be a perfect field. If $K+XL[X]$ and $L+XM[X]$ be Noetherian then $K\subset M$ be separable fields extension.
	
	\medskip
	
	Moreover, if we assume that any $K$-isomorphism $\varphi\colon M'\to M'$, where $\varphi(M)=M$ holds for every field $M'$ such that $M\subset M'$, then $K+XM[X]$ be a Noetherian. 
\end{pr}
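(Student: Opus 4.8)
The plan is to reduce everything to the degree criterion of Theorem~\ref{01}, the multiplicativity of field degrees, and the standard fact that a perfect field admits no inseparable algebraic extensions.

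First I would extract finiteness from the two Noetherianity hypotheses. Applying Theorem~\ref{01} to the extension $K\subset L$ and the composite $K+XL[X]$ gives $[L\colon K]<\infty$, and applying the same theorem to $L\subset M$ and the composite $L+XM[X]$ gives $[M\colon L]<\infty$. By the tower law $[M\colon K]=[M\colon L]\,[L\colon K]<\infty$, so $K\subset M$ is a finite, hence algebraic, extension. Next I would invoke the hypothesis that $K$ is perfect: every algebraic extension of a perfect field is separable, so in particular the finite extension $K\subset M$ is separable. This settles the first assertion.

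For the ``moreover'' part, I would observe that the supplementary assumption---that for every overfield $M'\supset M$ any $K$-isomorphism $\varphi\colon M'\to M'$ satisfies $\varphi(M)=M$---is exactly the hypothesis under which Proposition~\ref{04} applies to the extension $K\subset M$. Since $K$ is perfect and $K\subset M$ has just been shown to be separable, Proposition~\ref{04} yields that $K+XM[X]$ is Noetherian. One could also argue more directly, bypassing Proposition~\ref{04}: having already established $[M\colon K]<\infty$, the Noetherianity of $K+XM[X]$ follows at once from Theorem~\ref{01}.

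I expect no genuine obstacle here, since the result is essentially a formal consequence of Theorem~\ref{01}, the multiplicativity of degrees, and the characterisation of perfect fields. The only point requiring care is to match the stated $K$-isomorphism condition precisely to the hypothesis of Proposition~\ref{04}, so that separability---rather than bare finiteness---is what drives the second conclusion and keeps the argument inside the separable-extension framework of this section.
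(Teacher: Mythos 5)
Your proof is correct and takes essentially the approach this paper makes available: Theorem \ref{01} applied twice, the tower law, the standard fact that every algebraic extension of the perfect field $K$ is separable, and Proposition \ref{04} for the ``moreover'' clause (note the paper gives no inline proof of this proposition, deferring all results of this section to \cite{mm4}). Your closing observation is also sound and worth keeping: since Theorem \ref{01} is unconditional, $[M\colon K]<\infty$ alone already yields that $K+XM[X]$ is Noetherian, so the extra $K$-isomorphism hypothesis in the ``moreover'' part is in fact redundant.
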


\begin{pr}
	\label{14}
	Let $K\subset L\subset M$ be fields such that $M^{G(M\mid K)}=K$. If $K+XM[X]$ be Noetherian then $L\subset M$ be a normal fields extension. Moreover, $L+XM[X]$ be Noetherian.
\end{pr}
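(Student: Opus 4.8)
The plan is to treat the two assertions separately, leaning on Theorem \ref{01} and the Galois-theoretic characterization already packaged in Proposition \ref{10}. First I would extract finiteness: since $K+XM[X]$ is Noetherian, Theorem \ref{01} immediately gives $[M\colon K]<\infty$. This reduction is crucial, because it places us in the setting of a finite extension, where the hypothesis $M^{G(M\mid K)}=K$ is exactly the defining condition for $K\subset M$ to be Galois (Artin's theorem: a finite extension whose automorphism group has fixed field the base is Galois). Equivalently, I may invoke Proposition \ref{10} directly with its lower field taken to be $M$, so that the Noetherianity of $K+XM[X]$ together with $M^{G(M\mid K)}=K$ yields at once that $K\subset M$ is a Galois extension.

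Having established that $K\subset M$ is finite Galois, I would transfer normality down to the intermediate field $L$. The standard fact is that if $M/K$ is Galois then $M/L$ is Galois for every intermediate field $L$ with $K\subset L\subset M$: writing $M$ as the splitting field over $K$ of a separable polynomial $f\in K[X]$, the same $f$ lies in $L[X]$, remains separable, and still has $M$ as its splitting field, so $M/L$ is again Galois and in particular normal. This gives the first conclusion, that $L\subset M$ is a normal extension.

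For the ``Moreover'' clause I would appeal to Theorem \ref{01} a second time, now with base field $L$. Since $[M\colon K]=[M\colon L]\,[L\colon K]<\infty$, the intermediate degree $[M\colon L]$ is finite, and Theorem \ref{01} then yields that $L+XM[X]$ is Noetherian. The whole argument is short precisely because Theorem \ref{01} converts the ring-theoretic Noetherian condition into the purely field-theoretic statement of finite degree, which is transparently inherited by subextensions. I expect no deep obstacle here; the only genuinely load-bearing step is the passage from the hypothesis $M^{G(M\mid K)}=K$ to ``$K\subset M$ is Galois,'' which rests on Artin's theorem together with the finiteness supplied by Theorem \ref{01}. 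Once that is in hand, the descent of normality to $L\subset M$ and the Noetherianity of $L+XM[X]$ are formal consequences of the fundamental theorem of Galois theory and of a further application of Theorem \ref{01}, respectively.
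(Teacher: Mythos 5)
Your proof is correct and follows essentially the approach the paper intends: the paper prints no proof of Proposition \ref{14} (deferring to \cite{mm4}), but the machinery it builds is exactly what you use --- Theorem \ref{01} to convert Noetherianity of $K+XM[X]$ into $[M\colon K]<\infty$, Artin's theorem (equivalently Proposition \ref{10} applied to the extension $K\subset M$) to conclude $K\subset M$ is Galois, the standard descent of normality to $M/L$ via the splitting-field argument, and Theorem \ref{01} a second time with base field $L$ to get $L+XM[X]$ Noetherian. One cosmetic slip: when invoking Proposition \ref{10} you want its \emph{upper} field specialized to $M$, not the lower one; the mathematics you carry out is nevertheless the correct application.
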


\begin{pr}
	\label{15}
	Let $K\subset L$ be extension fields such that $[L\colon K]=2$. Then $K+XL[X]$ be Noetherian. Moreover, if $L^{G(L\mid K)}=K$, then $K\subset L$ be a normal.
\end{pr}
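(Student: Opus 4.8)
The plan is to derive both assertions directly from results already established in this section, handling the two claims in turn rather than building anything from scratch.

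For the first claim, I would observe that $[L\colon K]=2$ says in particular that $[L\colon K]<\infty$, which is exactly the condition appearing on the right-hand side of the equivalence in Theorem~\ref{01}. Hence I would simply apply Theorem~\ref{01} with $T=K+XL[X]$ and conclude immediately that $T$ is Noetherian. No genuine computation is needed at this step.

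For the ``moreover'' part, I would additionally assume $L^{G(L\mid K)}=K$. This is precisely the standing hypothesis of Proposition~\ref{07}, which states that under this condition $T=K+XL[X]$ is Noetherian if and only if $K\subset L$ is a normal extension. Since the first part has already produced that $T$ is Noetherian, the forward implication of Proposition~\ref{07} yields at once that $K\subset L$ is normal. Thus the entire statement reduces to chaining Theorem~\ref{01} into Proposition~\ref{07}.

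Because both steps are direct invocations of earlier results, I do not expect a real obstacle here; the only point demanding care is checking that the hypotheses of Theorem~\ref{01} and of Proposition~\ref{07} genuinely match the situation, in particular that the extra assumption $L^{G(L\mid K)}=K$ is available exactly where Proposition~\ref{07} requires it. For completeness I might add that the normality conclusion also has a self-contained justification: writing $L=K(\alpha)$ with minimal polynomial $x^2+bx+c$ over $K$, the second root equals $-b-\alpha\in L$, so the minimal polynomial already splits in $L$ and $L$ is the splitting field of a polynomial over $K$, whence $K\subset L$ is normal. This alternative does not even use $L^{G(L\mid K)}=K$, but routing the argument through Proposition~\ref{07} keeps it inside the composite-theoretic framework developed above.
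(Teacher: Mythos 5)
Your proof is correct and is essentially the intended derivation: the paper gives no inline proof of Proposition~\ref{15} (it defers this whole section to \cite{mm4}), and within the paper's own framework the statement follows exactly as you argue --- feed $[L\colon K]=2<\infty$ into the equivalence of Theorem~\ref{01} to get that $K+XL[X]$ is Noetherian, then apply the forward implication of Proposition~\ref{07}, whose standing hypothesis $L^{G(L\mid K)}=K$ is precisely the assumption of the ``moreover'' clause. Your closing observation is a genuine (if small) strengthening worth noting: since any $\alpha\in L\setminus K$ has degree-two minimal polynomial $x^2+bx+c$ whose second root $-b-\alpha$ already lies in $L$ (this covers the inseparable characteristic-two case as well, where the root is double), every quadratic extension is normal outright, so the hypothesis $L^{G(L\mid K)}=K$ is not actually needed for the normality conclusion --- the composite-theoretic route through Proposition~\ref{07} uses it only because that proposition is stated with it.
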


\begin{tw}[\cite{Magid}, Theorem 1.2.]
	\label{tm1}
	Let $M$ be an algebraically closed field algebraic over $K$, and let $L$ such that $K\subseteq L\subseteq M$ be an intermediate field. Then the following are equivalent:
	\begin{itemize}
		\item[(a) ] $L$ is separable over $K$.
		\item[(b) ] $M\otimes_K L$ has no nonzero nilpotent elements.
		\item[(c) ] Every element of $M\otimes_K L$ is a unit times an idempotent.
		\item[(d) ] As an $M$-algebra $M\otimes_KL$ is generated by idempotents.
	\end{itemize}
\end{tw}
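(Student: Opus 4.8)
The plan is to set $R=M\otimes_K L$ and prove the cycle of implications $(a)\Rightarrow(d)\Rightarrow(c)\Rightarrow(b)\Rightarrow(a)$. Throughout I would exploit that $L$ is algebraic over $K$, so $L=\varinjlim L_i$ is the directed union of its finite subextensions $K\subseteq L_i\subseteq L$; since $M$ is a free, hence flat, $K$-module, tensoring is exact and commutes with direct limits, giving $R=\varinjlim\,(M\otimes_K L_i)$ with each structure map $M\otimes_K L_i\hookrightarrow R$ injective. This reduces every question about nilpotents, units and idempotents to the finite-dimensional $M$-algebras $R_i=M\otimes_K L_i$.

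For $(a)\Rightarrow(d)$: if $L/K$ is separable then each $L_i/K$ is finite separable, so by the primitive element theorem $L_i=K(\alpha)$ with separable minimal polynomial $f$; then $R_i\cong M[X]/(f)$, and since $M$ is algebraically closed $f$ splits into $n=\deg f$ distinct linear factors, whence the Chinese Remainder Theorem gives $R_i\cong M^{\,n}$. Such a product of copies of $M$ is generated as an $M$-algebra by its coordinate idempotents; passing to the direct limit, $R$ is generated by idempotents, which is $(d)$.

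For $(d)\Rightarrow(c)$: an $M$-algebra generated by idempotents is, because a product of idempotents is again idempotent, the $M$-linear span of its idempotents; any given element involves only finitely many of them, and these generate a finite Boolean subalgebra with orthogonal atoms $g_1,\dots,g_r$ summing to $1$. Writing the element as $\sum_j a_j g_j$ with $a_j\in M$ and setting $e=\sum_{a_j\neq 0}g_j$ and $u=\sum_{a_j\neq 0}a_jg_j+\sum_{a_j=0}g_j$, one checks that $u$ is a unit, $e$ is idempotent, and the element equals $ue$, which is $(c)$. The step $(c)\Rightarrow(b)$ is immediate: if $x=ue$ with $u$ a unit, $e$ idempotent and $x^n=0$, then $u^n e=0$ forces $e=0$, hence $x=0$, so $R$ has no nonzero nilpotents.

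The remaining implication $(b)\Rightarrow(a)$ is the crux, and I would argue by contraposition using the characteristic-$p$ structure of inseparable extensions. If $L/K$ is not separable then $\operatorname{char}K=p>0$ and some $\alpha\in L$ has inseparable (irreducible) minimal polynomial $f$ over $K$, so $f'=0$ and $f(X)=g(X^p)$; consequently every root of $f$ over the algebraically closed field $M$ is repeated, giving $f=\prod_i(X-\beta_i)^{m_i}$ with some $m_i\geq 2$. By the Chinese Remainder Theorem $M\otimes_K K(\alpha)\cong M[X]/(f)\cong\prod_i M[X]/\bigl((X-\beta_i)^{m_i}\bigr)$, and the factor with $m_i\geq 2$ contains the nonzero nilpotent class of $X-\beta_i$. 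Since $M\otimes_K K(\alpha)\hookrightarrow R$, this yields a nonzero nilpotent in $R$, contradicting $(b)$. I expect this inseparable step to be the main obstacle, as it is where the hypotheses that $M$ is algebraically closed and that the characteristic is $p$ are genuinely used; the separable direction is by comparison essentially bookkeeping once the decomposition $R_i\cong M^{\,n}$ is in hand.
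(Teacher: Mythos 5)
Your proof is correct, and there is nothing in the paper to compare it against: the statement is imported verbatim from Magid's book (Theorem 1.2 of \cite{Magid}) and the paper supplies no proof of its own. Your cycle (a)$\Rightarrow$(d)$\Rightarrow$(c)$\Rightarrow$(b)$\Rightarrow$(a) is sound and is in substance the classical argument, essentially Magid's: since $L\subseteq M$ and $M$ is algebraic over $K$, the extension $L/K$ is algebraic, so the reduction of $R=M\otimes_K L$ to the directed union of the algebras $M\otimes_K L_i$ over finite subextensions is legitimate (tensoring over a field is flat and commutes with direct limits, so each $M\otimes_K L_i\to R$ is injective), and the splitting $M\otimes_K K(\alpha)\cong M[X]/(f)\cong\prod_i M[X]/\bigl((X-\beta_i)^{m_i}\bigr)$ is the right mechanism in both directions: all $m_i=1$ gives $M^n$ and its coordinate idempotents in the separable case, while $f\in K[X^p]$ forces every $m_i\ge 2$ and hence the visible nilpotent class of $X-\beta_i$ in the inseparable case. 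The step (d)$\Rightarrow$(c) is also fine: the $M$-subalgebra generated by idempotents is spanned by them (products of idempotents are idempotent), a given element involves finitely many, the Boolean subalgebra these generate together with $1$ is finite with orthogonal atoms $g_1,\dots,g_r$ summing to $1$, and your $u=\sum_{a_j\neq 0}a_jg_j+\sum_{a_j=0}g_j$ is indeed a unit, with inverse $\sum_{a_j\neq 0}a_j^{-1}g_j+\sum_{a_j=0}g_j$, so $x=ue$ as required; (c)$\Rightarrow$(b) is the triviality you state. Your closing assessment is also accurate: the only places where algebraic closedness of $M$ and characteristic $p$ are genuinely needed are the splitting of $f$ and the contrapositive of (b)$\Rightarrow$(a), respectively.
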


\begin{tw}[\cite{Magid}, Theorem 1.3.]
	\label{tm2}
	Let $M$ be an algebraically closed field containing $K$, and let $L$ be a field algebraic over $K$. Then the following are equivalent:
	\begin{itemize}
		\item[(a) ] $L$ is separable over $K$.
		\item[(b) ] $M\otimes_K L$ has no nonzero nilpotent elements.
		\item[(c) ] Every element of $M\otimes_K L$ is a unit times an idempotent.
		\item[(d) ] As an $M$-algebra $M\otimes_KL$ is generated by idempotents.
	\end{itemize}
\end{tw}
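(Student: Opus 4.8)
The plan is to deduce Theorem \ref{tm2} from Theorem \ref{tm1} by descending to the algebraic closure of $K$ inside $M$, treating the three ``structural'' conditions (b), (c), (d) by a direct elementwise argument and reserving the arithmetic content (a) $\Leftrightarrow$ (b) for the reduction. Throughout write $R = M \otimes_K L$. Since $L$ is algebraic over $K$, every element of $R$ lies in a subalgebra $M[x]$ that is a finite-dimensional $M$-algebra; in particular $R$ is integral over the algebraically closed field $M$, and this single observation is what makes the possibly infinite extension $L/K$ harmless.

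I would first establish (b) $\Leftrightarrow$ (c) $\Leftrightarrow$ (d) using only integrality over $M$. For (b) $\Rightarrow$ (c) and (b) $\Rightarrow$ (d): if $R$ is reduced then each $M[x]$ is a finite-dimensional reduced $M$-algebra, hence, as $M$ is algebraically closed, isomorphic to $M^n$; the coordinate idempotents $e_1, \dots, e_n$ lie in $M[x]$ by interpolation and $x = \sum_i \lambda_i e_i$ with $\lambda_i \in M$, so $R$ is generated over $M$ by its idempotents, and one checks directly that $x$ is a unit times an idempotent. For (c) $\Rightarrow$ (b): a nilpotent $x = ue$ with $u$ a unit and $e$ idempotent forces $e = u^{-n}x^n = 0$, so $x = 0$. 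For (d) $\Rightarrow$ (b): the $M$-subalgebra generated by a family of commuting idempotents is von Neumann regular, hence reduced. This closes the loop among (b), (c), (d) without appealing to Theorem \ref{tm1}.

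It then remains to prove (a) $\Leftrightarrow$ (b), and here I would invoke Theorem \ref{tm1}. Let $\overline{K}$ be the algebraic closure of $K$ inside $M$; it is algebraically closed and algebraic over $K$, so Theorem \ref{tm1} applies to the pair $(\overline{K}, L)$ and gives that $L$ is separable over $K$ if and only if $\overline{K} \otimes_K L$ is reduced. By associativity of the tensor product,
\[
 M \otimes_K L \;\cong\; M \otimes_{\overline{K}} \bigl(\overline{K} \otimes_K L\bigr),
\]
so the task reduces to comparing reducedness of $\overline{K} \otimes_K L$ with that of its base change to $M$. Since $\overline{K}$ is algebraically closed it is perfect, so reducedness is preserved under the field extension $\overline{K} \subseteq M$; conversely $\overline{K} \to M$ is faithfully flat, so $\overline{K} \otimes_K L$ embeds into $M \otimes_K L$ and reducedness descends. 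Hence $\overline{K} \otimes_K L$ is reduced if and only if $R$ is reduced, and combining this with Theorem \ref{tm1} yields (a) $\Leftrightarrow$ (b), completing the equivalence.

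The main obstacle is the forward transfer of reducedness across $\overline{K} \subseteq M$, namely that a reduced algebra over a perfect field remains reduced after an arbitrary field base change (geometric reducedness over a perfect field); this is the one genuinely non-formal input, the converse being a soft faithful-flatness argument. Everything else is either a citation of Theorem \ref{tm1} or the elementwise passage to finite-dimensional $M$-algebras carried out above.
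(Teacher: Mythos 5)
The paper contains no proof of Theorem \ref{tm2} to compare against: it is quoted verbatim from Magid's book (\cite{Magid}, Theorem 1.3), just as Theorem \ref{tm1} is quoted as his Theorem 1.2, so your proposal is in effect supplying an argument the paper outsources. Judged on its own merits it is correct. The cycle (b) $\Leftrightarrow$ (c) $\Leftrightarrow$ (d) via integrality over $M$ is sound: since $L/K$ is algebraic, each element lies in a finite-dimensional subalgebra $M[x]\cong M[T]/(f)$; reducedness makes $f$ squarefree, hence split into distinct linear factors over the algebraically closed $M$, so $M[x]\cong M^n$ with the coordinate idempotents recovered by Lagrange interpolation, and the converses ((c) $\Rightarrow$ (b) via $e=u^{-n}x^n$, (d) $\Rightarrow$ (b) via rewriting in orthogonal idempotents, i.e.\ von Neumann regularity) are routine and correct. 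Two points deserve explicit mention. First, Theorem \ref{tm1} is stated for an intermediate field $K\subseteq L\subseteq M$, whereas in Theorem \ref{tm2} the field $L$ is abstract; to apply \ref{tm1} to the pair $(\overline{K},L)$ you must first fix a $K$-embedding $\sigma\colon L\hookrightarrow\overline{K}$ (which exists because $L/K$ is algebraic and $\overline{K}$ is an algebraic closure of $K$) and observe that separability of $L/K$ and the isomorphism type of $\overline{K}\otimes_K L$ are unchanged under this identification --- a one-line repair, but your phrase ``applies to the pair'' silently assumes it. Second, the one input you flag as non-formal, geometric reducedness over the perfect field $\overline{K}$, can be eliminated entirely: \ref{tm1} gives (a) $\Rightarrow$ (d) for $\overline{K}\otimes_K L$, idempotents map to idempotents under the base change $M\otimes_{\overline{K}}(-)$, so $M\otimes_K L$ is generated by idempotents as an $M$-algebra, and your own (d) $\Rightarrow$ (b) argument then yields reducedness; the reverse direction stays the soft flatness/subring argument you give. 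With that substitution the whole proof is formally self-contained given \ref{tm1}, and the reduction through the algebraic closure of $K$ in $M$ is close in spirit to how Magid himself passes from his Theorem 1.2 to Theorem 1.3.
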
	

Below we have conclusions from the above results.

\begin{tw}
	\label{tm3}
	In Theorems \ref{tm1} and \ref{tm2} if assume $L^{G(L\mid K)}=K$, then conditions (a) -- (d) are equivalent to
	\begin{itemize}
		\item[(e) ] $K+XL[X]$ be a Noetherian.
		\item[(f) ] $[L\colon K]<\infty$
		\item[(g) ] $K\subset L$ be an algebraic extension.
		\item[(h) ] $K\subset L$ be a Galois extension.
	\end{itemize}
\end{tw}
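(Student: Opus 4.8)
The plan is to take the equivalence of (a)--(d) as already furnished by Theorems \ref{tm1} and \ref{tm2}, and to attach the four new conditions (e)--(h) by routing every implication through the single pivot (e), that $T=K+XL[X]$ is Noetherian. Essentially all of the required machinery is contained in the preceding propositions, so the bulk of the argument is the careful citation of those results under the standing hypothesis $L^{G(L\mid K)}=K$.

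First I would establish that (e), (f), (g) and (h) are pairwise equivalent. Here Theorem \ref{01} gives (e) $\Leftrightarrow$ (f), Proposition \ref{02} gives (e) $\Leftrightarrow$ (g), and Proposition \ref{10} gives (e) $\Leftrightarrow$ (h); the latter two both consume the hypothesis $L^{G(L\mid K)}=K$. At this stage the four ``Noetherian-type'' conditions collapse to one another with no reference yet to separability.

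Next I would bridge to the separability block. The easy direction is (h) $\Rightarrow$ (a): a Galois extension is separable by definition, so (h) immediately yields (a). For the converse I would again invoke the hypothesis: in the setting of Theorems \ref{tm1} and \ref{tm2} the field $L$ sits inside a field $M$ that is algebraic over $K$, so $K\subset L$ is algebraic, and for an algebraic extension the condition $L^{G(L\mid K)}=K$ is exactly the assertion that $K\subset L$ is Galois. Thus (a) together with the standing hypothesis forces (h), closing the cycle and yielding (a) $\Leftrightarrow$ (h). Chaining with the previous paragraph gives the equivalence of all of (a)--(h).

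The step I expect to be the main obstacle is the compatibility of the separability conditions (a)--(d), which on their face allow extensions of infinite degree, with the finiteness condition (f). The reconciliation is not direct: one cannot argue ``separable $\Rightarrow$ finite'' in isolation. Instead the hypothesis $L^{G(L\mid K)}=K$ must be used at each junction, so that Proposition \ref{02} (Noetherian $\Leftrightarrow$ algebraic) and Theorem \ref{01} (Noetherian $\Leftrightarrow$ finite) can be combined only through the pivot (e). I would therefore be scrupulous about invoking the fixed-field hypothesis everywhere rather than trying to deduce degree-finiteness from separability on its own; once the pivot equivalences are secured, the required reconciliation is automatic.
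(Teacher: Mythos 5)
Your proposal is correct and follows essentially the same route as the paper: both arguments rest on Theorem \ref{01} for (e)$\Leftrightarrow$(f), Proposition \ref{02} for the algebraic--Noetherian link, Proposition \ref{10} for the Noetherian--Galois link (both consuming $L^{G(L\mid K)}=K$), and the trivial implication (h)$\Rightarrow$(a). The only cosmetic divergence is that you close the cycle with a direct Artin-type observation that an algebraic extension satisfying $L^{G(L\mid K)}=K$ is Galois, whereas the paper instead chains (a)$\Rightarrow$(g)$\Rightarrow$(e)$\Rightarrow$(h) through the same propositions; the ingredients and the use of the fixed-field hypothesis are identical.
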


\begin{proof}
	(h)$\Rightarrow$(a) -- Obvious.
	
	\medskip
	
	(a)$\Rightarrow$(g)$\Rightarrow$(e)$\Rightarrow$(h) If $K\subset L$ be a separable extension, then be an algebraic extension. By Proposition \ref{02} $K+XL[X]$ be a Noetherian. By Proposition \ref{10} $K\subset L$ be a Galois extension.
	
	\medskip
	
	(e)$\Rightarrow$(f) -- Theorem \ref{01}.
\end{proof}

\begin{tw}
	\label{tm4}
	In Theorem \ref{tm3} if assume $K$ be a perfect field and $L^{G(L\mid K)}=K$, then conditions (a) -- (h) are equivalent to \\ 
	(g) $K\subset L$ be a normal extension.
\end{tw}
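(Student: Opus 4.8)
The plan is to show that the new condition, that $K\subset L$ is normal, fits into the chain of equivalences (a)--(h) already established in Theorem \ref{tm3}. Since under the standing hypothesis $L^{G(L\mid K)}=K$ those eight conditions are mutually equivalent, it suffices to prove that normality is equivalent to any single one of them; everything else then follows by transitivity.

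The most economical route passes through condition (e). The present theorem carries the hypothesis $L^{G(L\mid K)}=K$, which is exactly the hypothesis of Proposition \ref{07}; that proposition asserts that $T=K+XL[X]$ is Noetherian if and only if $K\subset L$ is normal. As (e) is precisely the assertion that $T$ is Noetherian, this yields normal $\Leftrightarrow$ (e), and hence normal is equivalent to all of (a)--(h). This argument does not even need perfectness, so I would present it first as the quickest closing of the loop.

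For completeness, and to explain where the perfectness of $K$ naturally enters, I would also give the purely field-theoretic equivalence normal $\Leftrightarrow$ (h). A normal extension is by definition algebraic, so normality immediately gives condition (g) of Theorem \ref{tm3}. Because $K$ is perfect, every algebraic extension of $K$ is separable; thus a normal extension is simultaneously normal and separable, i.e. Galois, which is condition (h). Conversely every Galois extension is normal. Hence normal $\Leftrightarrow$ (h) directly, independently of Proposition \ref{07}.

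There is no genuine obstacle here beyond bookkeeping: the entire content is the assembly of Proposition \ref{07} (or, alternatively, the perfect-field implication ``algebraic $\Rightarrow$ separable'') with the already-proved equivalences of Theorem \ref{tm3}. The only point requiring minor care is to confirm that the hypotheses of the invoked results---namely $L^{G(L\mid K)}=K$ for Proposition \ref{07} and perfectness of $K$ for the separability step---are exactly the two assumptions imposed in the present theorem, which they are, and to recall that normality is understood for algebraic extensions so that ``normal $\Rightarrow$ algebraic'' holds definitionally.
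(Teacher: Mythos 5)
Your proposal is correct (relative to the results the paper makes available), and your second, purely field-theoretic argument is in substance exactly the paper's own proof: the paper closes the loop by showing that normality implies (a) (normal $\Rightarrow$ algebraic, and perfectness of $K$ upgrades algebraic to separable) and that (h) implies normality (Galois $\Rightarrow$ normal, trivially), then lets the already-proved equivalences of Theorem \ref{tm3} do the rest; your ``normal $\Leftrightarrow$ (h)'' is the same two implications packaged together. Your first route, through Proposition \ref{07}, is genuinely different from what the paper does: you match the standing hypothesis $L^{G(L\mid K)}=K$ to the hypothesis of Proposition \ref{07} and splice normality directly into condition (e), with the sharp side observation that perfectness is then not needed at all. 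As to what each buys: the Proposition \ref{07} route is shorter and hypothesis-light, but it outsources all content to the composite-theoretic result of \cite{mm4} and inherits whatever delicacy that statement has for infinite extensions --- note that $K=\mathbb{Q}$, $L=\overline{\mathbb{Q}}$ satisfies $L^{G(L\mid K)}=K$ and is normal while $[L\colon K]=\infty$, so by Theorem \ref{01} the ring $T$ fails to be Noetherian, meaning Proposition \ref{07} (and hence your first route, and indeed the chain (a)--(h) of Theorem \ref{tm3} itself) is only safe with finiteness implicitly in force; the paper's route, which you reproduce, is self-contained at the level of elementary Galois theory and makes visible exactly where perfectness enters. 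Nothing is missing from your argument beyond caveats already latent in the paper's cited results.
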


\begin{proof}
	(g)$\Rightarrow$(a) If $K\subset L$ be a normal extension, then be an algebraic extension. By definition perfect field $K\subset L$ be a separable extension.
	
	\medskip
	
	(h)$\Rightarrow$(g) Obvious.
\end{proof}

Proposition \ref{10}, Theorems \ref{tm3} and \ref{tm4} can be used to solve the inverse Galois problem. The inverse Galois problem concerns whether or not every finite group appears as the Galois group of some Galois extension of the rational numbers $\mathbb{Q}$. This problem, first posed in the early 19th century, is unsolved. 

\medskip

There is a lot of work. And it is enough to solve the problem for nonabelian groups. Thus, the following question arises:

\medskip

\noindent
{\bf Question:}\\
Can all the statements of this sections operate in noncommutative structures?

\medskip

And another question also arises regarding polynomial composites:

\medskip

\noindent
{\bf Question:}\\
Under certain assumptions for any type of $K\subset L$, we get that $K+XL[X]$ be a Noetherian ring. When can $K+XL[X]$ be isomorphic to any Noetherian ring?

\section{Generalized RSA cipher}
\label{R3}

In \cite{kk1} we have an information about how can we make a finite alphabet to an infinite alphabet?

\medskip

We can assign an appropriate number to each letter of the alphabet: $A = 0, B = 1, C = 2, D = 3, E = 4, F = 5, G = 6, H = 7, I = 8, J = 9, K = 10, L = 11, M = 12, N = 13, O = 14, P = 15, Q = 16, R = 17, S = 18, T = 19, U = 20, V = 21, W = 22 , X = 23, Y = 24, Z = 25$. So the alphabet is a finite set. The opposite side can easily decipher using the length of the alphabet. What if we extend this alphabet to an infinite set? In this situation, we can stay with the alphabet, but extend the length to infinity. So we have $A = 0 + 26k_0, B = 1 + 26k_1, C = 2 + 26k_2, \dots, Y = 24 + 26k_ {24}, Z = 25 + 26k_ {25} $, where $ k_0, k_1, \dots , k_ {25} \in\mathbb{N}_0 $. So, for example, the text $A B A C A B$ can be converted to $0\quad 1\quad 0\quad 2\quad 0\quad 1$, but also to $0\quad 1\quad 26\quad 54\quad 26\quad 53$. And we can give this number sequence to encrypt.

\medskip

{\bf Generating keys}

\medskip

Let's choose distinct prime ideals $P=(p)$ and $Q=(q)$ ($p$, $q$ are distinct primes) such that $N=PQ$ such that $|N|<|(x)|$, where $x$ is the length of the alphabet.

\medskip

Compute $\Phi (N)=(\varphi(n)):=(P-1)(Q-1)=(p-1)(q-1)$.

\medskip

Let's choose the ideal $E=(e)$ such that $e$ and $\varphi(n)$ are relatively primes ($\gcd (e, \varphi(n)=1)$) and $|\Phi(N)|< |E|\subsetneq (1)=\mathbb{N}_0$.

\medskip

We find the ideal $D=(d)$ such that $ED\equiv 1(\textrm{mod}\ \Phi(N))$.

\medskip

The public key is defined as the pair of ideals $(N, E)$, while the private key is the pair $(N, D)$.

\bigskip

{\bf Encryption and decryption}

\medskip

We encrypt the message $M=M_0M_1\dots M_r$ by calculation 
$$C_i\equiv M_iE (\textrm{mod}\ \Phi(N))$$.

\medskip

The encrypted message $C=C_0C_1\dots C_r$ is decrypted by formula 
$$M_1\equiv C_iD (\textrm{mod}\ \Phi(N)).$$

\bigskip
\section{Generalized Diffie–Hellman key exchange}
\label{RR5}

From \cite{kk1} recall a generalized Diffie-Hellman key exchange.

\medskip

First person F and second person S agree on the prime ideals $(p)$ and $(g)$ in $\mathbb{N}_0$ such that $|(p)|<|(g)|$.

\medskip

Person F chooses any secret $(a)$ in $\mathbb{N}_0$ and sends to person S $$(A)\equiv (g)(a) (\textrm{mod}\ (p)).$$

\medskip

Person S chooses any secret $(b)$ in $\mathbb{N}_0$ and sends to person F $$(B)\equiv (g)(b) (\textrm{mod}\ (p)).$$

\medskip

Person F compute $(s)\equiv (B)(a)(\textrm{mod}\ (p))$.

\medskip

Person S compute $(s)\equiv (A)(b)(\textrm{mod}\ (p))$.

\medskip

Person F and person S share a secret ideal $(s)$. This is because 
$$(s)\equiv (g)(a)(b)\equiv (g)(b)(a)(\textrm{mod}\ (p)).$$

\section{A key that is a fractional ideal}
\label{RR1}

In section \ref{RR1} and \ref{RR2} we have cryptosystems that use the Dedekind structure (\cite{kk2} in cooperation with M. Jankowska). 
My goal was not to create an entire cryptosystem based on the Dedekind structure. The first cryptosystem has a Dedekind structure in the key. The second cryptosystem has a Dedekind structure in two different alphabets. It is essential. This increases the security of our data. First of all, we use the fractional ideal structure. The definition itself is very interesting and motivated to apply.

\medskip

Let $A=\{a_0, a_1, \dots, a_n\}$ be an alphabet such that $|A|$ be a prime number.
Let $x\in\{2, 3, \dots, |A|\}$ be the value of one of the letters of the alphabet, $k\geqslant 2$ be an key. Then

$$y=xk \pmod{|A|},$$

where $y$ be the value of one of the letters of the alphabet be an encrypted letter.

\medskip

Now, assume we have encrypted letter $y$. Then we get a decrypted letter $x$ by a formula

$$x=(y+(k-d)\cdot |A|)\cdot k^{-1},$$

where $d$ be the remainder of dividing $y$ by $k$.

\begin{proof}
	\begin{align*}
	x&=\dfrac{y+(k-y\pmod k)|A|}{k}=\\
	&=\dfrac{xk\pmod{|A|}+((k-(xk\pmod{|A|}))\pmod k)|A|}{k}=x
	\end{align*}
\end{proof}

As proposed in \cite{kk1} (Introduction of section 3), this cipher can be generalized to a complete algebraic structure.
It is enough to adopt the infinite alphabet as in \cite{kk1}, $x$ be transformed into the principal ideal $(x)$, $k$ be transformed into the principal ideal $(k)$, $y$ into the principal ideal $(y)$. This way we get algebraic encryption where the key $(k)$ be the fractional ideal in the Dedekind's ring, in this case $\mathbb{Z}$.

\bigskip

\section{The alphabet as a fractional ideal}
\label{RR2}

Let $A$ be a set of characters. Assume $|A|$ is equal to any prime number. 

\medskip

Secretly establish a second alphabet $A'$ such that $A'\subset A$ with a prime length. 

\medskip

Let $m_1m_2m_3\dots m_n$ be a message, we want to encrypt.

\medskip

A secret short alphabet $A'$ divides a large public alphabet into zones.
We skip the extra characters such that $0$, $1$. So we have a clean alphabet from $2$. Let's move one over, so we have $1$. Suppose $p=|A|$, $q=|A'|$. We have $\big\lceil\dfrac{p}{q} \big\rceil$ zones.
Zero zone, includes the alphabet from $1$ to $q$. 
The first zone, i.e. the alphabet from $q+1$ to $2q$ and so on.
The last zone ($\big\lceil\dfrac{p}{q} \big\rceil-1$) includes the alphabet from $\big\lceil\dfrac{p}{q} \big\rceil q$ to $p$.

\medskip

Let's extend the message values with random numbers informing us about a given zone of a given letter (this information denote by $z_i$):
$$z_1m_1z_2m_2\dots z_nm_n$$

\medskip

Denote by $k$ the key. Multiply each value of the message (not the information about the zone) by $k$ and use the modulo $q$.

\medskip

Hence ciphertext is:
$$z_1d_1z_2d_2\dots z_nd_n,$$
where $d_1d_2\dots d_n$ be a encrypted message.

\medskip

Now let's decode the message.
$$z_1d_1z_2d_3\dots z_nd_n$$
by dividing it into blocks (each block contains a zone and a message).

\medskip

Let's apply the formula:
$$m_i=\dfrac{d_i+(z_i+t_i\cdot k)|A|}{k},$$
where $m_i$ is the decoded letter, $d_i$ encrypted letter, $z$ is a number satisfies a congruence $|A|^{-1}z_i\equiv d_i\pmod k$, $k$ be the key, $t$ be a zone.

\medskip

Of course, this cryptosystem can also be easily generalized by turning individual elements into ideals.

\bigskip
\section{Applications of polynomial composites in cryptology}
\label{R4}

Finally, we will show cryptosystems based on polynomial composites and monoid domains.

\begin{lm}
	Let $f=a_0+a_1X+\dots + a_{n-1}X^{n-1}+ \sum\limits_{j=n}^{m}a_jX^j$, $g=b_0+b_1X+\dots + b_{n-1}X^{n-1}+ \sum\limits_{j=n}^{m}b_jX^j$, where $a_i, b_i\in A_i$ for $i=0, 1, \dots, n-1$ and $a_j, b_j\in B$ for $j=n, n+1, \dots, m$. Then 
	$$fg\in A_0+XB[X].$$  
\end{lm}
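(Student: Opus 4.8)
The plan is to track the coefficients of the product $fg$ directly, separating the constant term from the terms of positive degree. The essential observation is that membership in $A_0+XB[X]$ imposes a constraint only on the constant term (it must lie in $A_0$), together with the mild requirement that every remaining coefficient lie in $B$. So the whole statement reduces to two short checks.

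First I would observe that, because each field $A_i$ is contained in $B$ by hypothesis, both $f$ and $g$ already lie in $B[X]$. Since $B[X]$ is closed under multiplication, $fg\in B[X]$, and in particular every coefficient of $fg$ belongs to $B$. This disposes of all coefficients of $X^k$ for $k\geq 1$ at once, with no computation beyond noting $A_i\subseteq B$.

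Next I would isolate the constant term. Writing $fg=\sum_{k}c_kX^k$ with $c_k=\sum_{i+j=k}a_ib_j$, the constant coefficient is simply $c_0=a_0b_0$. By hypothesis $a_0,b_0\in A_0$, and since $A_0$ is a field it is closed under multiplication, so $c_0\in A_0$. Combining the two steps, $fg=c_0+\sum_{k\geq 1}c_kX^k$ has constant term in $A_0$ and all higher coefficients in $B$, which is precisely the defining shape of an element of $A_0+XB[X]$.

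There is no genuine obstacle in this argument; it is a coefficient-bookkeeping verification. The only point deserving attention is conceptual rather than computational: one must recognize that the target set $A_0+XB[X]$ constrains nothing beyond the constant term, so that the positive-degree coefficients---whatever their precise values---are automatically admissible once they are known to lie in $B$. This is why the full product, even though its positive-degree coefficients mix elements of the various $A_i$ and of $B$, still lands in $A_0+XB[X]$.
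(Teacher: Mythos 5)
Your proof is correct. The paper states this lemma without any proof at all (it is used immediately afterwards to build the cryptosystem), so there is nothing to deviate from: your two-step coefficient check --- constant term $a_0b_0\in A_0$ because $A_0$ is closed under multiplication, all higher coefficients in $B$ because the standing hypothesis $A_0\subset A_1\subset\dots\subset A_{n-1}\subset B$ puts $f,g\in B[X]$ --- is precisely the routine verification the paper implicitly relies on, and it correctly identifies the one conceptual point, namely that $A_0+XB[X]$ constrains only the constant term.
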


Put $A_i, B_j$ $(i, j=0, 1, \dots, n-1)$ be different encryption systems. Then we have $f$ and $g$ are composition of encryption systems. No consider $B$. To improve security, let's fix that $\deg f=n-1$, $\deg g=n-k$, where $k\in\{2, \dots, n-1\}$. And such $f$, $g$ Alice and Bob agree before the message is sent. 

\medskip

Alice and Bob multiply these composites to form one. 
We have \\
$fg=(A_0+A_1X+\dots A_kX^k)(B_0+B_1X+\dots +B_lX^l)=A_0B_0+(A_0B_1+A_1B_0)X+\dots +A_kB_lX^{k+l}.$

Note that the sum and product of the encryption systems must be defined in the formula above. Definitions we leave Alice and Bob. But in this section we can put $S_iS_j: x\to (x)_{S_i}(x)_{S_j}$ and $S_i+S_j: x\to ((x)_{S_i})_{S_j}$. 
We can define the product and the sum of cryptosystems completely differently.

\medskip

So in the product we encrypt the letter as two letters, the first in the first system and the second in the second system. And in the sum we encrypt the letter using the first system and then the second system. Of course, we can define completely different, at our discretion.

\medskip

Assume that degree of $fg$ is $m$ and text to encrypt consists of more letters then $m+1$. 
Then we divide the text into blocks of length $m + 1$. 
We can assume that $fg(0)$ encrypts the first letter of each block. Expression at $X$ of $fg$ encrypts the second letter of each block, and expression at $X^2$ of $fg$ encrypts the third letter and so on.

\medskip

Now, let's see how to decrypt in this idea.

\medskip

Assume that we have an encrypted message $M_0M_1\dots M_n$. If our key is degree $m$, then we divide message on $m+1$ partition. And every partion divide to two. Every two letters are one letter of message. 

Earlier we define $S_iS_j: x\to (x)_{S_i}(x)_{S_j}$ and $S_i+S_j: x\to ((x)_{S_i})_{S_j}$. Then decryption of two letters $M_lM_{l+1}$ $(l=0, 2, 4, \dots )$ are $M_lM_{l+1}=(M_l)_{S_i}(M_{l+1})_{S_j}=N_{l,l+1}$ (one letter) and $M_l=((M_l)_{S_i})_{S_j}=(N_l)_{ij}$ (one letter). 

\medskip

The use of many cryptosystems in various configurations in a polynomial composite increases our security. The security here lies in the fact that the encrypted message is resistant to breaking under many cryptanalyst criteria.

\medskip

It is very easy to decrypt the message when you know the key.

\bigskip
\section{Applications of monoid domains in cryptology}
\label{R5}

Any alphabet of characters creates a finite set. Most ciphers are based on finite sets. But we can have the idea of using the infinite alphabet $\mathbb{A}$, although in reality they can be cyclical sets with an index that would mean a given cycle. For example, A$_0$ - $0$, B$_0$ - $1$, $\dots$ , Z$_0$ - $25$, A$_1$ - $0$, B$_1$ - $1$, $\dots$ , where A$_i$=A, $\dots$, Z$_i$=Z for $i=0, 1, \dots $. We see that this is isomorphic to a monoid $\mathbb{N}_0$ non-negative integers by a formula 

$$f\colon\mathbb{A}\to\mathbb{N}, f(m_i)=i.$$ 

\medskip

Then we can use a monoid domain by a map 
$$\varphi\colon\mathbb{A}\to F[\mathbb{A}], \varphi(m_0, m_1, \dots , m_n)=a_0X^{m_0}+\dots a_nX^{m_n}.$$

\medskip

We want to encrypt the message $m_0m_1m_2\dots m_n$ (the letters transform to numbers by a function $\varphi$).
We establish the secret key $X$.
Let $F$ be a field. We determine any coefficients from this field: $a_0$, $a_1$, $\dots$, $a_n$.
Then the message $m_0m_1m_2\dots m_n$ be transformed into a polynomial of the form:
$$a_0X^{m_0}+a_1X^{m_1}+\dots +a_nX^{m_n}.$$ 
We compute for $i=0, 1, \dots, n$:
$d_i=a_iX^{m_i}\pmod{|\mathbb{A}|}$ ($|\mathbb{A}|$ must be prime)
and then we have a decrypt message $d_0d_1\dots d_n$.

\medskip

To decrypt it we need to use a formula (for $i=0, 1, \dots, n$):
$$m_i=\log_X\dfrac{d_i}{a_i}\pmod{|\mathbb{A}|}.$$

\begin{proof}
	$$\log_X\dfrac{a_iX^{m_i}}{a_i}=m_i\pmod{|\mathbb{A}|}.$$
\end{proof}


\begin{thebibliography}{0}

\bibitem{0}
D.D. Anderson, D.F. Anderson, M. Zafrullah, Factorization in integral domains, {\it Journal of Pure and Applied Algebra}, {\bf 69}, (1990) 1--19.

\bibitem{1}
D.D. Anderson, D.F. Anderson, and M. Zafrullah, Rings between D[X] and K[X], {\it Houston J. of Mathematics}, {\bf 17}, (1991) 109--129.

\bibitem{y5}
D.F. Anderson and A. Ryckaert, The class group of $D + M$, {\it J. Pure Appl. Algebra}, {\bf 52}, (1988) 199--212.
	
\bibitem{y1}
J. Brewer and E. Rutter, D + M construction with general overrings, {\it Mich. Math. J.}, {\bf 23}, (1976) 33--42. 

\bibitem{y2}
D. Costa, J. Mott, and M. Zafrullah, The construction $D + XD_S[X]$, {\it J. Algebra}, {\bf 153}, (1978) 423--439.

\bibitem{zzz}
P.M. Eakin Jr., {\it The converse to a well known theorem on Noetherian rings}, Math. Ann. {\bf 177}, (1968) 278 -- 282.

\bibitem{y4}
M. Fontana and S. Kabbaj, On the Krull and valuative dimension of $D + XD_S[X]$ domains, {\it J. Pure Appl. Algebra}, {\bf 63}, (1990) 231--245.

\bibitem{2}
H. Kim, Factorization in monoid domains, {\it Comm. Algebra}, {\bf 29}, (2001) 1853--1869.

\bibitem{Magid}
A.R. Magid, {\it The Separable Galois Theory of Commutative Rings}, Chapman \& Hall/CRC, Boca Raton, 2014.

\bibitem{mm1}
\L{}. Matysiak, {\it On properties of composites and monoid domains}, Accepted for printing in Advances and Applications in Mathematical Sciences, \url{http://lukmat.ukw.edu.pl/files/On-properties-of-composites-and-monoid-domains--template-IJPAM-Italy-.pdf}, (2020).

\bibitem{mm2}
\L{}. Matysiak, {\it ACCP and atomic properties of composites and monoid domains}, Accepted for printing in Indian Journal of Mathematics, \url{http://lukmat.ukw.edu.pl/files/ACCP-and-atomic-properties-of-composites-and-monoid-domains.pdf}, (2020).

\bibitem{kk1}
Ł. Matysiak, {\it Generalized RSA cipher and Diffie-Hellman protocol}, J. Appl. Math.\& Informatics Vol.39 (2021), No. 1 - 2, pp. 93 -- 103

\bibitem{mm3}
\L{}. Matysiak, {\it On some properties of polynomial composites}, arXiv: 2104.09657, (2021).

\bibitem{mm4}
\L{}. Matysiak, {\it Polynomial composites and certain types of fields extensions}, arXiv: 2011.09904, (2021).

\bibitem{kk2}
M. Jankowska and \L{}. Matysiak, {\it A structure of Dedekind in the cryptosystem}, \url{http://lukmat.ukw.edu.pl/files/A-structure-of-Dedekind-in-the-cryptosystem.pdf}, (2021).

\bibitem{9}
T. Shah and W.A. Khan, On Factorization properties of monoid S and monoid domain D[S], {\it International Mathematical Forum}, {\bf 5}(18), (2010) 891--902.

\bibitem{y3}
M. Zafrullah, The $D + XD_S[X]$ construction from GCD-domains, {\it J. Pure Appl. Algebra}, {\bf 50}, (1988) 93--107.	


\end{thebibliography}
\end{document}